\documentclass[11pt,reqno]{amsart}
\usepackage{amssymb,mathrsfs,graphicx,subfigure, enumerate}
\usepackage{amsmath,amsfonts,amssymb,amscd,amsthm,bbm}
\usepackage{graphicx,colortbl,here}
\usepackage{extpfeil}
\numberwithin{figure}{section}
\usepackage{kotex}
\usepackage[utf8]{inputenc}
\usepackage[T1]{fontenc}
\usepackage{bm}

\def\d {{\partial}}

\def \rpsi_i {|\psi_i \rangle}
\def \lpsi_i {\langle \psi_i|}
\def \lrpsi_i{\langle \psi_i | \psi_i \rangle}
\def \rpsi_k {|\psi_k \rangle}
\def \lpsi_k {\langle \psi_k|}
\def \lrpsi_k{\langle \psi_k | \psi_k \rangle}

\newcommand{\bbr}{\mathbb R}

\def\d{\mathrm{d}}

\newcommand{\dm}{d}

\DeclareMathOperator*{\argmin}{argmin}

\newcommand{\ba}{\begin{aligned}}
\newcommand{\ea}{\end{aligned}}

\newcommand{\be}{\begin{equation}}
\newcommand{\ee}{\end{equation}}

\renewcommand{\d}{  {\textup{d}} }
\newcommand{\dt} {    {  \textup{d}t   }    }

\newtheorem{theorem}{Theorem}[section]
\newtheorem{lemma}{Lemma}[section]
\newtheorem{corollary}{Corollary}[section]
\newtheorem{proposition}{Proposition}[section]
\newtheorem{remark}{Remark}[section]
\newtheorem{definition}{Definition}[section]

\begin{document}

\title[]{Finite-dimensional reduction of a Wasserstein gradient flow and sharp decay rates}

\author[D. Kim]{Dohyun Kim}
\address[D. Kim]{\newline Department of Mathematics Education and Institute of Pure and Applied Mathematics, \newline Sungkyunkwan University, Seoul 03063, Republic of Korea}
\email{dohyunkim@skku.edu}

\author[H. Park]{Hansol Park}
\address[H. Park]{\newline Department of Mathematics, \newline National Tsing Hua University, Hsinchu 30013, Taiwan}
\email{hansolpark@math.nthu.edu.tw}

\author[W. Shim]{Woojoo Shim}
\address[W. Shim]{\newline Department of Mathematics Education, \newline Kyungpook National University, Daegu 41566, Republic of Korea }
\email{wjshim@knu.ac.kr}

\thanks{\textbf{Acknowledgment.}
The work of D. Kim was supported by the National Research Foundation of Korea (NRF) grant funded by the Korean government (MSIT) (RS-2024-00454452), and the work of H. Park was supported by the National Science and Technology Council (NSTC), Taiwan (Grant No. NSTC 115-2115-M-007-001-MY3).}

\begin{abstract}
We study the Wasserstein gradient flow generated by a family of extended generalized variance functionals, defined as the expected squared $n$-dimensional volume of a simplex, which includes the classical variance-type interaction and generalized variance as special cases. The key structural observation is that this functional depends only on the covariance matrix. Consequently, the Wasserstein gradient flow reduces to a finite-dimensional system for the eigenvalues of the covariance matrix, and the full measure-valued solution can be recovered through an explicit linear pushforward representation. Using this representation, we establish global well-posedness for arbitrary initial data in $\mathcal P_2(\mathbb R^d)$ without assuming compact support. We also study the long-time behavior of the flow. For every initial datum in $\mathcal P_2(\mathbb R^d)$, the solution converges to a limiting equilibrium measure whose covariance has rank strictly less than $n$. Moreover, we obtain sharp convergence rates in all spectral regimes: exponential in the non-degenerate case and algebraic with the optimal exponent in the degenerate case.
\end{abstract}

\keywords{Wasserstein gradient flow, extended generalized variance,
finite-dimensional reduction, covariance dynamics, rank collapse, sharp decay rates.}

\makeatletter
\@namedef{subjclassname@2020}{%
  \textup{2020} Mathematics Subject Classification}
\makeatother

\subjclass[2020]{49Q22, 35A01, 35B40, 35Q49, 35R06.}

%49Q22: optimal transport
%35A01: Existence problems for PDEs: 
%35B40: Asymptotic behavior of solutions to PDEs
%35R06: PDEs with measure

\date{\today}

\maketitle

%\tableofcontents

\section{Introduction}
\setcounter{equation}{0}

Gradient flows in the Wasserstein space provide a powerful framework for studying the dissipative evolution of probability measures. By starting from a functional defined on  a space of probability measures, the associated Wasserstein gradient flow describes the steepest descent dynamics with respect to the quadratic distance. This viewpoint has become a central tool in the analysis of variational problems,
interacting particle systems, and dissipative dynamics. One of the celebrated examples is the variational formulation of the Fokker--Planck equation through the Jordan--Kinderlehrer--Otto scheme \cite{JKO} (JKO scheme for brevity). In many models, a main analytical challenge is to rigorously  identify the gradient flow structure, establish global or local well-posedness in a suitable sense and determine the asymptotic behavior of solutions (see Section \ref{sec:2.1} for more details on the Wasserstein gradient flows). \newline

In this work, we investigate a Wasserstein gradient flow generated by a higher-order volume functional. Given $n+1$ points in $\bbr^d$, say, $x_0,\cdots,x_n \in \bbr^d$, we define the $n$-dimensional volume of the simplex spanned by these points.
\begin{definition}[$n$-dimensional volume of a simplex] \label{volume}
	For $x_0,\dots,x_n\in\mathbb R^d$, set
	\[
	B(x_0,\dots,x_n)
	:=
	\begin{bmatrix}
		x_1-x_0 & \cdots & x_n-x_0
	\end{bmatrix}
	\in\mathbb R^{d\times n}.
	\]
	We define the $n$-dimensional volume of the (possibly degenerate) simplex with vertices $x_0,\dots,x_n$ by
	\[
	\mathrm{Vol}_n(x_0,\dots,x_n)
	:=
	\frac{1}{n!}\sqrt{\det (B^\top B)}.
	\]
\end{definition}

 For a probability measure $\rho$ on $\bbr^d$,  we define the extended generalized variance of $\rho$:
\begin{definition}[Extended generalized variance {\cite{pronzato2017extended}}]
	For $\rho\in\mathcal P(\mathbb R^d)$, we define
	\[
	\mathcal V_n[\rho]
	:=
	\int_{(\mathbb R^d)^{n+1}}
	\mathrm{Vol}_n(x_0,\dots,x_n)^2\,\d\rho^{\otimes(n+1)}(x_0,\dots,x_n).
	\]
	We call $\mathcal V_n[\rho]$ the extended generalized variance of $\rho$.
\end{definition}
Thus, $\mathcal V_n[\rho]$ is the expected squared $n$-dimensional volume of a simplex whose vertices are sampled independently according to $\rho$. This functional can be understood as a higher-order extension of the standard variance-type interaction. For instance, when $n=1$, it reduces to the mean squared pairwise distance up to a constant. On the other hand, for $n=d$, it recovers the classical generalized variance, again up to normalization \cite{pronzato2017extended, wilks1932certain}.  For $1<n<d$, the functional interpolates between these two quantities by measuring the combined dispersion over all $n$-dimensional covariance directions. The family $\{\mathcal V_n\}_{n=1}^d$ therefore provides a dimension-indexed hierarchy of dispersion measures, ranging from the total variance to the full generalized variance. This feature becomes particularly useful when the covariance matrix is degenerate or nearly degenerate.

 In such cases, the determinant may vanish or become too coarse to distinguish the lower-dimensional structure, whereas the intermediate functionals $e_n(\Sigma)$ still detect the amount of dispersion along $n$-dimensional subspaces.  Related simplicial variance functionals have been
used to define simplicial potentials and generalized Mahalanobis distances
\cite{pronzato2018simplicial}, as well as distributional divergences and optimal-design
criteria \cite{pronzato2018bregman}. We also refer to \cite{WZ} for a recent
exterior-algebra viewpoint on generalized variances and cross-covariances.

In this work, we take a dynamical viewpoint on this family of dispersion functionals by studying their Wasserstein gradient flows. Formally, the Wasserstein gradient flow of $\mathcal V_n$ is given by the following continuity equation:
\[
\partial_t \rho_t + \nabla \cdot (\rho_t v[\rho_t]) =0,\quad v[\rho] = -\nabla \frac{\delta \mathcal V_n}{\delta \rho},
\]
where the velocity field is formally given by the negative Wasserstein gradient field of $\mathcal V_n$. At first glance, this is a nonlinear and nonlocal equation, since the velocity field is determined by the distribution itself through the first variation of the potential. One of the main novelties of this paper is that, despite its nonlocal nature, the flow has a remarkably rigid finite-dimensional structure.

\subsection{Finite-dimensional representation}
One of the key observations is the covariance representation of $\mathcal V_n$. For $\rho\in \mathcal P_2(\bbr^d)$, if we denote the mean and covariance matrix by
\begin{equation}\label{defmS}
m(\rho):=\int_{\mathbb R^d}x\,\d\rho(x),
\quad
\Sigma(\rho):=\int_{\mathbb R^d}(x-m(\rho))(x-m(\rho))^\top\,\d\rho(x),
\end{equation}
then at least formally, $\mathcal V_n$ is represented by 
\begin{equation} \label{finite}
\mathcal V_n[\rho] = \frac{n+1}{n!} e_n(\Sigma(\rho)).
\end{equation}
Here, for a symmetric matrix $A$, we denote  $e_k(A)$ by the $k$-th elementary symmetric polynomial of the eigenvalues of $A$, with the convention $e_0(A)=1$ (see Proposition \ref{prop:moment-representation}). 

By differentiating the finite-dimensional representation \eqref{finite}, the velocity field is explicitly written as 
\[
v[\rho] = -2c_n T_{n-1}(\Sigma(\rho))(x-m(\rho)),
\]
where $T_{n-1}$ is the $(n-1)$-th Newton transformation and $c_n$ is a normalization constant:
\[
T_{n-1}(A):=\sum_{k=0}^{n-1}(-1)^k e_{n-1-k}(A)A^k,\quad c_n:=\frac{n+1}{n!}
\]
It is worthwhile mentioning that the velocity field is affine in the spatial variable $x$ and depends on $\rho$ only through $m(\rho)$ and $\Sigma(\rho)$. 

This structural simplification allows us to reduce the infinite-dimensional Wasserstein gradient flow to a finite-dimensional system. Formally, the mean is conserved along the flow and the covariance matrix satisfies
\[
\frac{\d}{\d t} m_t =0,\quad \frac{\d}{\d t} \Sigma_t = -4c_n \Sigma_t T_{n-1}(\Sigma_t).
\]
 After diagonalizing the covariance matrix, the eigenvalues solve
\[
\frac{\d}{\d t} \lambda_i(t) = -4c_n \lambda_i(t) e_{n-1} (\widehat{\lambda_i(t)}), \quad 1\leq i \leq d,
\]
where $e_{n-1}(\widehat{\lambda_i(t)})$ denotes the elementary symmetric polynomial of degree $n-1$ in all eigenvalues except $\lambda_i(t)$. See Proposition  \ref{prop:covariance-dynamics}  and Corollary \ref{cor:eigenvalue-system} for the rigorous derivations. This finite-dimensional reduction is the starting point for both the well-posedness theory and the sharp asymptotic analysis.

\subsection{Global well-posedness}
Our first main result establishes global well-posedness of the Wasserstein gradient flow for arbitrary initial data in $\mathcal P_2(\bbr^d)$. More precisely, for every $\rho_0\in \mathcal P_2(\bbr^d)$, we construct a global curve $(\rho_t)_{t\geq0}$ by solving the reduced covariance system and pushing forward the initial datum through a time-dependent linear transition map. We then show that this curve is the unique $\textup{AC}^2$ weak solution of the continuity equation driven by the above explicit velocity field. In addition, it is also the unique maximal slope solution of $\mathcal V_n$ in $\mathcal P_2(\bbr^d)$ and satisfies the energy dissipation identity (EDI for brevity). To this end, we consider a solution to the following continuity equation:
\begin{equation}  \label{PDE}
	\partial_t \rho_t + \nabla \cdot ( \rho_t v_t) =0, \quad v_t(x) := v[\rho_t](x)
\end{equation} 
subject to initial data
\begin{equation}\label{initial}
\rho|_{t=0}=\rho_0.
\end{equation}
Recall that our velocity field $v[\rho_t]$ satisfies 
\begin{equation} \label{vt}
 v[\rho_t](x) = -\nabla \frac{\delta\mathcal V_n}{\delta \rho}[\rho_t] = -(n+1) \int_{(\bbr^d)^n}  \nabla_x\left(\textup{Vol}_n(x,x_1,\cdots,x_n)^2\right)\d\rho_t(x_1) \cdots \d\rho_t(x_n).
\end{equation}

We now recall the solution concepts used for the Wasserstein gradient flow.

\begin{definition} \label{def:sol}
	Let $T>0$. 
	A continuous curve $(\rho_t)\in \textup{AC}^2 (0,T;  \mathcal P_2(\bbr^d))$ is called a weak solution to the Cauchy problem for \eqref{PDE}--\eqref{initial} on $[0,T)$ if $\rho|_{t=0} = \rho_0$ and \eqref{PDE} holds in the sense of distributions: for all  $\varphi \in C_c^\infty([0,T]\times \bbr^d)$, 
	\[
	\int_0^T \int_{\bbr^d} ( \partial_t \varphi + v_t \cdot \nabla \varphi) \d\rho_t \d t + \int_{\bbr^d} \varphi(0,\cdot)\d\rho_0 = 0.
	\]
    In addition, $\rho$ is called an EDI solution of the gradient flow $\mathcal V_n$ if the following relation holds: for $t\in [0,T]$, 
		\[
		\mathcal V_n[\rho_t] + \frac12 \int_0^t |\rho'|^2(s) \d s + \frac12 \int_0^t |\partial \mathcal V_n|^2(\rho_s) \d s \leq \mathcal V_n[\rho_0].
		\]
\end{definition}

Now, we are ready to state the global well-posedness result. 

 \begin{theorem}[Finite-dimensional reduction and well-posedness]
	\label{thm:wellposed-reduction}
	Let $1\le n\le d$ and let $\rho_0\in\mathcal P_2(\mathbb R^d)$. Then the
	Wasserstein gradient flow of $\mathcal V_n$ starting from $\rho_0$ admits a unique
	maximal slope solution with respect to $|\partial \mathcal V_n|$. Moreover, this
	solution satisfies the energy dissipation identity
	\[
	\mathcal V_n[\rho_t]
	+
	\frac12\int_s^t |\rho'|^2(r)\,\d r
	+
	\frac12\int_s^t |\partial \mathcal V_n|^2(\rho_r)\,\d r
	=
	\mathcal V_n[\rho_s],
	\qquad
	0\le s\le t<\infty.
	\]
	It is also the unique $AC^2$ weak solution of
	\[
	\partial_t\rho_t+\nabla\cdot(\rho_t v[\rho_t])=0,
	\qquad
	\rho|_{t=0}=\rho_0,
	\]
	where
	\[
	v[\rho](x)
	=
	-2c_nT_{n-1}(\Sigma(\rho))(x-m(\rho)).
	\]
Finally, the flow is finite-dimensional at the level of the covariance
	matrix: if
	\[
	\Sigma(\rho_0)=P^\top D_0P,
	\qquad
	D_0=\operatorname{diag}(\lambda_1^0,\dots,\lambda_d^0),
	\]
	then
	\[
	\Sigma(\rho_t)=P^\top D_tP,
	\qquad
	D_t=\operatorname{diag}(\lambda_1(t),\dots,\lambda_d(t)),
	\]
	where
	\[
	\frac{\d}{\d t}\lambda_i(t)
	=
	-4c_n\lambda_i(t)e_{n-1}(\widehat\lambda_i(t)),
	\qquad
	1\le i\le d.
	\]
\end{theorem}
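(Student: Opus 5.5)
We propose to construct the solution explicitly and then identify it with every notion in the statement.

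\textbf{Step 1: the reduced system and the linear flow.} By Proposition \ref{prop:moment-representation}, $\mathcal V_n[\rho]=c_n e_n(\Sigma(\rho))$. We solve the eigenvalue ODE $\dot\lambda_i=-4c_n\lambda_i e_{n-1}(\widehat\lambda_i)$ with data $\lambda_i^0\ge0$. The right-hand side is polynomial, so a local solution exists and is unique. Each $\lambda_i$ is of the form $\lambda_i^0\exp(-\int a_i)$ with $a_i=4c_ne_{n-1}(\widehat\lambda_i)$, hence stays nonnegative. Since $\frac{\d}{\d t}\sum_i\lambda_i\le 0$, the $\lambda_i$ are bounded and the solution is global.

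Set $D_t=\operatorname{diag}(\lambda_i(t))$ and $\Sigma_t=P^\top D_tP$. Define the matrix $A_t:=P^\top\operatorname{diag}(e^{-2c_n\int_0^t e_{n-1}(\widehat\lambda_i)})P$, which solves $\dot A_t=-2c_nT_{n-1}(\Sigma_t)A_t$. Here we use the fact that $T_{n-1}$ of a diagonal matrix is $\operatorname{diag}(e_{n-1}(\widehat\lambda_i))$ and commutes with $\Sigma_t$. Then put
\[
X_t(x):=m_0+A_t(x-m_0),\qquad \rho_t:=(X_t)_\#\rho_0 .
\]
With this definition, $m(\rho_t)=m_0$ and $\Sigma(\rho_t)=A_t\Sigma_0A_t^\top=P^\top D_tP=\Sigma_t$. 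A direct computation with test functions, $\frac{\d}{\d t}\int\varphi\,\d\rho_t=\int\nabla\varphi(X_t)\cdot\dot A_t(x-m_0)\,\d\rho_0$, shows that $\rho_t$ is a distributional solution with velocity $v[\rho_t]$. Moreover $\int_0^T\|v_t\|^2_{L^2(\rho_t)}\,\d t<\infty$ because the quantities involved are continuous matrix functions, so $\rho\in AC^2$. This gives existence and the covariance statement.

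\textbf{Step 2: uniqueness of weak solutions.} Let $\tilde\rho$ be any $AC^2$ weak solution. Its velocity is affine with continuous coefficients. Testing against $x$ and $xx^\top$ requires a cutoff argument, which is justified by the second-moment bound in $AC^2$. This yields $\dot{\tilde m}=0$ and $\dot{\tilde\Sigma}=-4c_n\tilde\Sigma T_{n-1}(\tilde\Sigma)$. That matrix ODE is locally Lipschitz, so $\tilde\Sigma=\Sigma_t$. The velocity field is then a known, globally Lipschitz affine field $v_t$, and by uniqueness for the linear continuity equation (Ambrosio–Gigli–Savaré, Prop. 8.1.7/8.1.8) we get $\tilde\rho_t=(X_t)_\#\rho_0=\rho_t$.

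\textbf{Step 3: slope, EDI and maximal slope uniqueness.} This is the main obstacle. We would compute the metric slope
\[
|\partial\mathcal V_n|(\rho)=\|v[\rho]\|_{L^2(\rho)}=2c_n\big(\operatorname{tr}(T_{n-1}(\Sigma)^2\Sigma)\big)^{1/2}.
\]
For the upper bound we use a chain rule: $\mathcal V_n$ is smooth in $\Sigma$, and $\Sigma$ is Lipschitz along $W_2$-geodesics with derivative $2\int\mathrm{sym}((x-m)w^\top)$. The difficulty is that $\mathcal V_n$ is not $\lambda$-convex, so abstract AGS uniqueness theory cannot be applied directly. Instead we would show that $|\partial\mathcal V_n|$ is a strong upper gradient. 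For every $AC^2$ curve $\mu$ with tangent field $w$, the function $\mathcal V_n[\mu_t]$ is absolutely continuous with
\[
\frac{\d}{\d t}\mathcal V_n[\mu_t]=\int\nabla\tfrac{\delta\mathcal V_n}{\delta\rho}\cdot w\,\d\mu_t\ge-|\partial\mathcal V_n|\,|\mu'| .
\]
For the lower bound on the slope we test against $\mu^\varepsilon=(\mathrm{id}+\varepsilon v)_\#\rho$.

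Any curve of maximal slope then satisfies equality in Young's inequality, which forces $w_t=v[\mu_t]$ $\mu_t$-a.e. (strict convexity of $L^2$). Hence the curve is an $AC^2$ weak solution, and Step 2 gives uniqueness. Conversely, our $\rho_t$ satisfies $\frac{\d}{\d t}\mathcal V_n[\rho_t]=-\|v_t\|^2=-\frac12|\rho'|^2-\frac12|\partial\mathcal V_n|^2$, using $|\rho'|=\|v_t\|_{L^2(\rho_t)}$ since $v_t$ is a gradient field and hence lies in the tangent space. Integrating from $s$ to $t$ yields the EDI identity.
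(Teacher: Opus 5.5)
Your proposal is correct. Its overall architecture matches the paper's: explicit pushforward construction, slope identity, chain rule, and equality in Cauchy--Schwarz and Young to identify the velocity of any maximal slope curve.

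The genuine difference is how you prove uniqueness of $AC^2$ weak solutions. The paper proves the stability estimate $W_2(\rho_t,\sigma_t)\le e^{C_Tt}W_2(\rho_0,\sigma_0)$ by differentiating $\frac12W_2^2(\rho_t,\sigma_t)$. That argument needs the first-order approximation property of tangent fields (Proposition~\ref{prop:first-order-approximation}), the claim that $v[\sigma_t]$ is the tangent field of an arbitrary weak solution, the nonnegativity of $T_{n-1}(\Sigma)$, and the local $W_2$-Lipschitz bound on $\rho\mapsto v[\rho]$. You instead use the closure of the moment equations. Every $AC^2$ weak solution has a conserved mean and a covariance solving the locally Lipschitz matrix ODE, so its covariance equals the constructed $\Sigma_t$. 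The problem then becomes a \emph{linear} continuity equation with a prescribed affine field, globally Lipschitz in $x$ and continuous in $t$, and the characteristics representation in \cite{AGS2005} forces $\tilde\rho_t=(X_t)_\#\rho_0$. Your route is more elementary and exploits the finite-dimensional structure more fully. The paper's Dobrushin-type argument additionally gives quantitative continuous dependence on the initial datum, and it does not rely on exact closure of the moment system.

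The remaining differences are minor:
\begin{itemize}
\item Your closed-form transition matrix in the eigenbasis replaces the abstract ODE $\dot Q_t=-A_tQ_t$ and the check $Q_t\Sigma_0Q_t^\top=\Sigma_t$.
\item For the lower slope bound you test with the affine perturbation $(\mathrm{id}+\varepsilon v[\rho])_\#\rho$. Its covariance $(I-2\varepsilon c_nT_{n-1}(\Sigma))\Sigma(I-2\varepsilon c_nT_{n-1}(\Sigma))$ is explicit, so you avoid the $C_c^\infty$ density argument.
\item You get $|\rho'|(t)=\|v[\rho_t]\|_{L^2(\rho_t)}$ from tangency of gradient fields, rather than from the paper's squeeze using the strong upper gradient inequality.
\end{itemize}

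When you write it out, make two points explicit. First, in the geodesic-based upper slope bound, $\Sigma(\mu_s)\to\Sigma(\rho)$ uniformly in $s\in[0,1]$ as $W_2(\rho,\nu)\to0$, by Lemma~\ref{lem:moment-covariance-stability}. Hence $\sup_s\|\xi_{\mu_s}\|_{L^2(\mu_s)}\to\|\xi_\rho\|_{L^2(\rho)}$. Second, the affine field $v[\rho_t]$ lies in the $L^2(\rho_t)$-closure of $\{\nabla\varphi:\varphi\in C_c^\infty\}$. This follows by truncating the quadratic potential with $\eta_R$ and using $\rho_t\in\mathcal P_2(\mathbb R^d)$.
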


We emphasize that for the global well-posedness, no compact support assumption is imposed on the initial measure. The only assumption is the finite second moment condition, which is quite natural in the Wasserstein framework. This is possible because the affine representation of the velocity field makes it feasible to control the whole dynamics using only the mean and covariance matrix. 

Our proof consists of several ingredients. First, we show that the covariance ODE is globally well-posed, and hence determines a global transition map. Second, the corresponding pushforward curve is shown to solve the continuity equation and satisfy the maximal slope formulation. This requires, for instance, an explicit slope identity and the verification that the local slope is indeed a strong upper gradient. Finally, uniqueness is established by a $W_2$-stability estimate. Here,  we highlight that the map $\rho\mapsto   v[\rho]$ is locally Lipschitz with respect to $W_2$ on sets with bounded second moments, even without compact support (see Theorem \ref{thm:W2-stability-uniqueness}).

\subsection{Long-time behavior}

Our second main result is concerned with the long-time behavior of the Wasserstein gradient flow. We first show that since each eigenvalue of the covariance matrix is non-negative and non-increasing, the limiting covariance matrix exists. In addition, the interaction induced by $\mathcal V_n$ forces at least $d-n+1$ directions to collapse, and hence the limiting covariance matrix has rank strictly smaller than $n$.  In this regard, we can show that there exists a limiting measure $\rho_\infty \in \mathcal P_2(\bbr^d)$ such that
\[
\lim_{t\to\infty} W_2(\rho_t,\rho_\infty) =0.
\]
Thus, the Wasserstein gradient flow converges to an equilibrium measure supported on a low-dimensional affine subspace. In the generic non-degenerate case $\ell=n$, the limiting covariance matrix has rank $n-1$. In addition to the existence of an equilibrium measure, we further determine the sharp convergence rate in every spectral regime. More precisely, if $\ell=n$, then the convergence rate is exponential, whereas if $\ell<n$, then the convergence rate becomes algebraic. 

Before stating the result, we fix the asymptotic notation used below. 
For two positive functions \(f\) and \(g\) defined on \([0,\infty)\), we write
\[
f(t)\lesssim g(t)
\]
if there exist constants \(C>0\) and \(T>0\) such that
\[
f(t)\le Cg(t) \qquad \text{for all } t\ge T.
\]
We write
\[
f(t)\simeq g(t)
\]
if both \(f(t)\lesssim g(t)\) and \(g(t)\lesssim f(t)\) hold. Equivalently, 
\(f(t)\simeq g(t)\) means that there exist constants \(C>0\) and \(T>0\) such that
\[
\frac{g(t)}{C}\le f(t)\le C g(t)
\qquad \text{for all } t\ge T.
\]

\begin{theorem}[Rank collapse and sharp decay rates]
	\label{thm:long-time-rates}
	Let $(\rho_t)_{t\ge0}$ be the solution in
	Theorem~\ref{thm:wellposed-reduction}. Since $\Sigma(\rho_0)$ is a symmetric matrix, without loss of generality, we can assume that its eigenvalues are ordered as
       \[
	\lambda_1^0\ge\cdots\ge\lambda_{\ell-1}^0
	>
	\lambda_\ell^0=\cdots=\lambda_n^0
	\ge
	\lambda_{n+1}^0\ge\cdots\ge\lambda_d^0
	\]
    for some $\ell\in\{1,\dots,n\}$. Then, the following assertions hold. 
    \begin{enumerate}
    \item If \(\lambda_n^0=0\), then \(\rho_t=\rho_0\) for all \(t\ge0\).
    \item If \(\lambda_n^0>0\), there exists a limiting measure
	$\rho_\infty\in\mathcal P_2(\mathbb R^d)$ such that
	\[
	W_2(\rho_t,\rho_\infty)\to0
	\qquad\text{as }t\to\infty,
	\]
    and $\Sigma(\rho_\infty)$ has rank $\ell-1$. Moreover, if \(w_1,\dots,w_d\) are orthonormal eigenvectors of
\(\Sigma(\rho_0)\) associated with
\(\lambda_1^0,\dots,\lambda_d^0\), respectively, then
\[
\operatorname{supp}\rho_\infty
\subset
m_0+\operatorname{span}\{w_1,\dots,w_{\ell-1}\},
\qquad m_0:=m(\rho_0).
\]
Here and in the sequel, we use the convention that
\(\operatorname{span}\varnothing=\{0\}\).
    \item If \(\lambda_n^0>0\), the convergence rate of $W_2(\rho_t, \rho_\infty)$ is given as follows.
    \[
    \begin{cases}
    \displaystyle-\ln\left(W_2(\rho_t,\rho_\infty)\right)\simeq t\quad\text{if }n=\ell,\vspace{0.2cm}\\
    \displaystyle W_2(\rho_t, \rho_\infty)\simeq t^{-\frac{1}{2(n-\ell)}}\quad\text{if }n>\ell.
    \end{cases}
    \]
    %i.e., $W_2(\rho_t,\rho_\infty)$ converges to zero exponentially if $n=\ell$.
    \end{enumerate}
\end{theorem}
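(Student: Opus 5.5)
The plan is to work entirely through the linear pushforward representation from Theorem~\ref{thm:wellposed-reduction}. Write $\rho_t=(\Phi_t)_\#\rho_0$ with $\Phi_t(x)=m_0+A_t(x-m_0)$ and $A_t=P^\top\operatorname{diag}(a_1(t),\dots,a_d(t))P$. Here $a_i'=-2c_ne_{n-1}(\widehat{\lambda_i})a_i$ with $a_i(0)=1$, so that $\lambda_i(t)=a_i(t)^2\lambda_i^0$. Everything then reduces to the eigenvalue system of Theorem~\ref{thm:wellposed-reduction}.

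\textbf{Step 1 (case (1)).} Suppose $\lambda_n^0=0$. Then at most $n-1$ eigenvalues are positive. For $\lambda_i>0$, the polynomial $e_{n-1}(\widehat{\lambda_i})$ is a sum of products of $n-1$ eigenvalues drawn from at most $n-2$ positive ones, so it vanishes. For $\lambda_i=0$ the right-hand side of the ODE vanishes trivially. Hence $D_t\equiv D_0$. Moreover, $\rho_0$-a.e.\ we have $x-m_0\in\operatorname{range}\Sigma(\rho_0)$, and $T_{n-1}(\Sigma)$ acts on the eigenvector $w_i$ by the scalar $e_{n-1}(\widehat{\lambda_i})=0$. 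So $v[\rho_0]=0$ $\rho_0$-a.e., the constant curve is an $AC^2$ weak solution, and uniqueness gives $\rho_t=\rho_0$.

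\textbf{Step 2 (spectral dynamics when $\lambda_n^0>0$).} I will use the identity
\[
\frac{\d}{\d t}\ln\frac{\lambda_i}{\lambda_j}=-4c_n(\lambda_j-\lambda_i)\,e_{n-2}(\widehat{\lambda_i,\lambda_j}).
\]
It shows that equal eigenvalues stay equal, the ordering is preserved, and $\lambda_{\ell-1}/\lambda_n$ is nondecreasing. Each $\lambda_i$ is nonincreasing, so the limits $\lambda_i^\infty$ exist.

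First I show $\lambda_n\to0$. Otherwise $\lambda_1,\dots,\lambda_n\ge\lambda_n^\infty>0$, so $-\lambda_n'\ge c>0$, which is a contradiction. Hence $\lambda_\ell=\dots=\lambda_n\to0$, and also $\lambda_i\to0$ for $i>n$.

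Next come two counting bounds. Any $(n-1)$-subset of indices other than $i<\ell$ contains at least $n-\ell+1$ indices $\ge\ell$, each with eigenvalue $\le\lambda_n$. Therefore $e_{n-1}(\widehat{\lambda_i})\le C\lambda_n^{n-\ell+1}$. On the other hand, $e_{n-1}(\widehat{\lambda_n})\ge \mu^{\ell-1}\lambda_n^{n-\ell}$, where $\mu:=\min_{i<\ell}\lambda_i$.

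\textbf{Step 3 (main obstacle: the bootstrap).} The difficulty is to keep $\mu$ bounded away from $0$ while $\lambda_n$ decays. I will run a continuity argument. As long as $\mu\ge\mu_*>0$, we have $\lambda_n'\le -c\mu_*^{\ell-1}\lambda_n^{n-\ell+1}$. This forces exponential decay if $n=\ell$, and $\lambda_n\lesssim t^{-1/(n-\ell)}$ if $n>\ell$.

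In both cases $\int^\infty\lambda_n^{n-\ell+1}\,\d t<\infty$, since $(n-\ell+1)/(n-\ell)>1$. Hence $\ln\lambda_i(t)\ge\ln\lambda_i^0-C\int_0^t\lambda_n^{n-\ell+1}$ stays bounded below. To close the argument, I will start it at a large time $T_0$ where $\lambda_n$ is already small. This gives $\lambda_i^\infty>0$ for $i<\ell$ and $\operatorname{rank}\Sigma(\rho_\infty)=\ell-1$.

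With $\mu$ bounded below, the matching upper bound $e_{n-1}(\widehat{\lambda_n})\le C\lambda_n^{n-\ell}$ yields two-sided rates: $-\ln\lambda_n\simeq t$ if $n=\ell$, and $\lambda_n\simeq t^{-1/(n-\ell)}$ if $n>\ell$.

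\textbf{Step 4 (limit measure and $W_2$ rates).} Set $\rho_\infty:=(\Phi_\infty)_\#\rho_0$ with $a_i(\infty)=\lim a_i(t)$. This limit is positive for $i<\ell$ and zero for those $i\ge\ell$ with $\lambda_i^0>0$; the coordinates with $\lambda_i^0=0$ are irrelevant $\rho_0$-a.e. The support statement is then immediate.

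For the upper bound, the coupling $(\Phi_t,\Phi_\infty)_\#\rho_0$ gives $W_2^2(\rho_t,\rho_\infty)\le\sum_i(a_i(t)-a_i(\infty))^2\lambda_i^0$. The terms with $i\ge\ell$ contribute at most $d\lambda_n(t)$. The terms with $i<\ell$ contribute at most $C\big(\int_t^\infty\lambda_n^{n-\ell+1}\big)^2$, which is $o(\lambda_n(t))$ by the rates of Step 3.

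For the lower bound, $\rho_\infty$ is supported on $L:=m_0+\operatorname{span}\{w_1,\dots,w_{\ell-1}\}$. Hence $W_2^2(\rho_t,\rho_\infty)\ge\int\operatorname{dist}(x,L)^2\,\d\rho_t\ge\lambda_n(t)$. Thus $W_2(\rho_t,\rho_\infty)\simeq\sqrt{\lambda_n(t)}$, which gives exactly the two regimes in (3).
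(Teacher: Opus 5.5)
\textbf{The gap is in Step~3: the continuity argument, as described, does not close.} Under the bootstrap hypothesis $\mu\ge\mu_*$ on $[T_0,t]$ you get $\lambda_n^{n-\ell+1}\le-\lambda_n'/(4c_n\mu_*^{\ell-1})$. Hence $\int_{T_0}^t\lambda_n^{n-\ell+1}\,\d s\le\lambda_n(T_0)/(4c_n\mu_*^{\ell-1})$, and for $i<\ell$
\[
\ln\lambda_i(t)\ge\ln\lambda_i(T_0)-C\,\frac{\lambda_n(T_0)}{\mu_*^{\ell-1}},
\]
with $C$ depending only on $n,d,\lambda_1^0$.

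To upgrade $\mu\ge\mu_*$ to a strict inequality (say with $\mu_*=\mu(T_0)/2$), you need $\lambda_n(T_0)/\mu(T_0)^{\ell-1}$ to lie below a fixed threshold. Taking $T_0$ large makes $\lambda_n(T_0)$ small, but a priori $\mu(T_0)$ may be decaying too. Nothing forces the ratio to become small; ruling out this simultaneous decay is exactly the claim $\lambda_{\ell-1}^\infty>0$ you are trying to prove. Your a priori information $\lambda_{\ell-1}/\lambda_n\ge\lambda_{\ell-1}^0/\lambda_n^0>1$ bounds $\lambda_n/\mu$ only by $\lambda_n^0/\lambda_{\ell-1}^0$, which can be arbitrarily close to $1$. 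For $\ell\ge3$ it gives no control at all over $\lambda_n/\mu^{\ell-1}$. If you instead fix $\mu_*$ independently of $T_0$, you need $\mu(T_0)>\mu_*$ for large $T_0$, which is again the conclusion.

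\textbf{What is missing is a mechanism tying the growth of the gap ratio to the decay of $\lambda_{\ell-1}$.} The paper supplies it in Lemma~\ref{lemma:lminus1nonzero}, by contradiction. If $\lambda_{\ell-1}\to0$, l'H\^opital gives $\limsup\lambda_{\ell-1}/\lambda_\ell\le\binom{d-1}{n-1}$. Meanwhile $\frac{\d}{\d t}\ln\ln(\lambda_{\ell-1}/\lambda_\ell)$ is bounded below, and $-\frac{\d}{\d t}\ln\lambda_{\ell-1}$ bounded above, by multiples of the same integrand $\prod_{j\le n}\lambda_j/\lambda_{\ell-1}$. So a bounded ratio forces $\ln\lambda_{\ell-1}$ to stay bounded, a contradiction.

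Alternatively, your own ratio identity closes the argument directly. For $i<\ell$ set $x:=\lambda_i/\lambda_n>1$ and split
\[
e_{n-1}(\widehat{\lambda_i})=e_{n-1}(\widehat{\lambda_i,\lambda_n})+\lambda_n e_{n-2}(\widehat{\lambda_i,\lambda_n}).
\]
Each term of the first sum contains a factor $\lambda_j$ with $j\ge\ell$, $j\ne n$, hence $\lambda_j\le\lambda_n$. Counting multiplicities gives $e_{n-1}(\widehat{\lambda_i})\le d\,\lambda_n e_{n-2}(\widehat{\lambda_i,\lambda_n})$. Combined with your identity, this yields
\[
-\frac{\d}{\d t}\ln\lambda_i\le d\,\frac{x'}{x(x-1)}=d\,\frac{\d}{\d t}\ln\frac{x-1}{x},
\qquad\text{so}\qquad
\lambda_i(t)\ge\lambda_i^0\Bigl(1-\frac{\lambda_n^0}{\lambda_i^0}\Bigr)^{d}.
\]

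With this lower bound in hand, the rest of your argument goes through. Step~4 in particular is a correct and simpler substitute for the paper's route:
\begin{itemize}
\item for the upper bound, your coupling $(\Phi_t,\Phi_\infty)_\#\rho_0$ replaces the paper's $\int_t^\infty|\rho'|$ estimate;
\item for the lower bound, $W_2^2(\rho_t,\rho_\infty)\ge\int\operatorname{dist}(x,L)^2\,\d\rho_t=\sum_{i\ge\ell}\lambda_i(t)$ replaces the Gelbrich inequality.
\end{itemize}
Your Step~1 is also correct. It is more careful than the paper's remark that the transition map is the identity, since $T_{n-1}(\Sigma_0)$ need not vanish on $\ker\Sigma_0$.
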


\begin{remark}
The rank-collapse behavior of our Wasserstein gradient flow has a natural connection with principal component analysis (PCA for brevity). Classical PCA identifies low-dimensional affine subspaces through the leading eigenvectors of the covariance matrix, or equivalently, through the directions with the largest variance. Thus, although our flow is not a PCA algorithm, its long-time dynamics exhibits a PCA-type selection of principal covariance directions. 
\end{remark}

The dichotomy between exponential and algebraic relaxations is caused by the degeneracy of the leading eigenvalues of the covariance matrix. When the $n$-th relevant eigenvalue is separated from the preceding one, the collapsing direction decays exponentially. However, when several relevant eigenvalues coincide, the reduced ODE instantaneously loses a linear spectral gap and produces an algebraic decay rate. We also demonstrate that our rates are optimal. In this sense, the long-time dynamics of the higher-order nonlocal Wasserstein
gradient flow is governed by a finite-dimensional spectral mechanism.

The present work lies at the intersection of Wasserstein gradient flows, nonlocal interaction energy functionals, and covariance-based dispersion functionals. The metric theory of Wasserstein gradient flows in probability spaces was developed in \cite{AGS2005,JKO,otto2001geometry} and related works, for instance, \cite{ambrosio2008hamiltonian,daneri2008eulerian,lisini2007characterization}. Nonlocal interaction equations have also been studied extensively, especially for pairwise interaction energies and repulsive-attractive potentials; see, for example, \cite{balague2013dimensionality,carrillo2011global,craig2017nonconvex,craig2016convergence}. Convergence rates in Wasserstein distance for related granular media equations were obtained in \cite{bolley2012uniform,carrillo2003kinetic,carrillo2006contractions} through entropy dissipation, contractivity estimates, or dissipation estimates for the Wasserstein distance. In these works, well-posedness, concentration, dimensionality of minimizers, and convergence rates are usually governed by pairwise kernels, convexity or coercivity properties, contractivity estimates, or compactness arguments.

By contrast, the energy functional considered here is an $(n+1)$-body simplex interaction. Although it is genuinely higher-order and nonlocal, it admits the exact covariance representation \eqref{finite}. This algebraic structure makes the velocity field affine in space, closes the covariance dynamics, and determines the full measure-valued flow through linear pushforward maps. Consequently, the main properties of the flow follow from an explicit finite-dimensional spectral mechanism: global well-posedness holds for arbitrary $\mathcal P_2(\mathbb R^d)$-initial data, rank collapse of the covariance matrix, and the convergence rate is sharp in each spectral regime.

The paper is organized as follows. In Section \ref{sec:2}, we recall the generalized variance functional, its covariance representation and preliminaries from the Wasserstein gradient flow framework. In Section \ref{sec:3}, we present the proof of Theorem \ref{thm:wellposed-reduction} for   global well-posedness together with the uniqueness. In Section \ref{sec:4}, we prove Theorem \ref{thm:long-time-rates} by establishing  the existence of an equilibrium measure and sharp convergence rates.

\section{Preliminaries}\label{sec:2}
\setcounter{equation}{0}

\subsection{Previous results on Wasserstein gradient flows} \label{sec:2.1}

We briefly recall the Wasserstein gradient-flow framework and some related results that motivate the present work. The quadratic Wasserstein space $(\mathcal P_2(\mathbb R^d),W_2)$ provides a natural metric setting for probability measures with finite second moment. In this space, absolutely continuous curves can be represented by velocity fields through the continuity equation
\[
  \partial_t\rho_t+\nabla\cdot(\rho_t v_t)=0,
\]
and the metric derivative $|\rho'|(t)$ is identified with the minimal $L^2(\rho_t)$-norm of such velocity fields. This dynamic interpretation is one of the basic links between optimal transport and PDEs (see for instance \cite{AGS2005,santambrogio2017euclidean,Vil}).

The first fundamental example is the variational formulation of the Fokker--Planck equation by the JKO scheme \cite{JKO}. For the free energy
\[
\mathcal F(\rho) := \int_{\bbr^d}\rho(x)\ln\rho(x)\,dx
+ \int_{\bbr^d} V(x)\,d\rho(x),
\]
with the usual convention that the entropy is $+\infty$ if $\rho$ is not absolutely continuous, the Wasserstein gradient flow formally yields
\[
\partial_t\rho = \nabla\cdot(\rho\nabla V)+\Delta\rho.
\]
Under suitable assumptions on $V$, the JKO scheme constructs a solution to this flow by the implicit variational iteration
\[
\rho_\tau^k \in \argmin_{\rho\in \mathcal P_2(\bbr^d)}
\left\{
\frac{1}{2\tau}W_2^2(\rho,\rho_\tau^{k-1})+\mathcal F(\rho)
\right\},
\]
where $\tau>0$ is a fixed time step and $\rho_\tau^0:=\rho_0$. It has become one of the standard tools for constructing solutions to dissipative PDEs in probability spaces.

Another important development is Otto's formal Riemannian calculus on $\mathcal P_2(\mathbb R^d)$ \cite{otto2001geometry}. In this viewpoint, diffusion equations (for instance, the porous medium equation) can be understood as gradient flows of suitable energy functionals. This geometric interpretation is particularly useful for deriving dissipation identities and asymptotic convergence estimates.

The rigorous metric theory was developed by Ambrosio, Gigli and Savar\'e \cite{AGS2005}. In this framework, gradient flows can be formulated without relying on a smooth differentiable structure. The main objects are the metric derivative, the local slope $|\partial\mathcal F|$, strong upper gradients, the energy dissipation inequality, etc (see Section \ref{sec:2.3}). This formulation is especially useful for measure-valued solutions where classical pointwise differentiability may not be available.

Wasserstein gradient flows also arise naturally for nonlocal interaction energies. A classical
pairwise interaction energy has the form
\[
\mathcal W(\rho) := \frac12\int_{\bbr^d\times \bbr^d}W(x-y) \d\rho(x) \d\rho(y),
\]
and the associated formal velocity field is
\[
v[\rho](x)=-\nabla W*\rho(x).
\]
For instance, the authors in \cite{carrillo2011global} developed a global-in-time theory
of weak measure solutions for nonlocal interaction equations and related it to Wasserstein
gradient flows. In such pairwise models, the velocity field is determined by a two-body
kernel through convolution with the measure.

The interaction energy considered in the present paper is of a different type. It is generated
by the squared volume of $n$-simplices and is therefore an $(n+1)$-body higher-order
interaction. Hence, at the formal level, one may expect a genuinely higher-order nonlocal
continuity equation. The point of the subsequent analysis is that this expectation is too
pessimistic: the covariance representation of $\mathcal V_n$ allows us to compute the first
variation explicitly, identify an affine velocity field, and reduce the flow to a finite-dimensional
covariance eigenvalue system.

\subsection{The functional $\mathcal V_n$ on $\mathcal P_2(\mathbb R^d)$}

In this subsection, we recall the extended generalized variance functional introduced in \cite{pronzato2017extended}; see also \cite{WZ} for a recent exterior-algebra viewpoint. For measures in $\mathcal P_2(\bbr^d)$, this functional admits a finite-dimensional representation in terms of the covariance matrix, and this representation will be the starting point of our later analysis.

Moreover, \cite{pronzato2017extended,WZ} give an explicit formula for
$\mathcal V_n[\rho]$ in terms of the eigenvalues of $\Sigma(\rho)$.

\begin{proposition}[Moment representation of $\mathcal V_n$ {\cite{pronzato2017extended,WZ}}]
	\label{prop:moment-representation}
	Let $1\le n\le d$ and $\rho\in\mathcal P_2(\mathbb R^d)$. Then
	\[
	\mathcal V_n[\rho]
	=
	\frac{n+1}{n!}\,
	e_n\bigl(\lambda_1(\rho),\dots,\lambda_d(\rho)\bigr),
	\]
	where $\lambda_1(\rho),\dots,\lambda_d(\rho)$ are the eigenvalues of $\Sigma(\rho)$, and
	\[
	e_n(c_1,\dots,c_d)
	:=
	\sum_{1\le i_1<\cdots<i_n\le d} c_{i_1}\cdots c_{i_n}.
	\]
	In particular, $\mathcal V_n[\rho]<\infty$.
\end{proposition}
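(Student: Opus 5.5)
Our plan is to reduce the integrand to a Gram determinant and apply the Cauchy--Binet formula. Write $y_j:=x_j-m$ with $m=m(\rho)$; since $B(x_0,\dots,x_n)$ depends only on differences, we may assume $m=0$, so that $\int y\,\d\rho=0$ and $\int yy^\top\,\d\rho=\Sigma:=\Sigma(\rho)$. We use the standard identity that for $x_0,\dots,x_n$ the augmented $(d+1)\times(n+1)$ matrix
\[
\widetilde X:=\begin{bmatrix} 1 & \cdots & 1\\ x_0 & \cdots & x_n\end{bmatrix}
\]
satisfies $\det(\widetilde X^\top\widetilde X)=\det(B^\top B)$. This holds because column operations (subtracting the first column from the others) turn $\widetilde X$ into $\begin{bmatrix}1&0\\ x_0& B\end{bmatrix}$, and for this block matrix the Gram determinant factors. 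More directly, by Cauchy--Binet, $\det(B^\top B)=\sum_{|I|=n}\det(B_I)^2$ over $n$-row subsets $I\subset\{1,\dots,d\}$, and similarly $\det(\widetilde X^\top \widetilde X)=\sum_{J}\det(\widetilde X_J)^2$ over $(n+1)$-row subsets $J$ of $\{0,1,\dots,d\}$ (row $0$ being the row of ones). Only the subsets containing row $0$ survive in the comparison, after we reduce columns: each $\det \widetilde X_{\{0\}\cup I}=\det B_I$. The subsets without row $0$ do not vanish in general, however, so we will instead work directly with $B$.

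The main computation is therefore $\mathbb E\det(B^\top B)=\sum_{|I|=n}\mathbb E[\det(B_I)^2]$, where $\det B_I=\det\widetilde X_{\{0\}\cup I}$ is the $(n+1)\times(n+1)$ determinant with rows $(1,\dots,1)$ and the coordinates $I$ of $x_0,\dots,x_n$. Expanding the square as a double sum over permutations $\sigma,\tau$ of $\{0,\dots,n\}$ (indexing rows by $\{0\}\cup I$), we take expectations using the independence of the $x_j$. Each column $j$ contributes $\mathbb E[r_{\sigma(j)}(x_j)\,r_{\tau(j)}(x_j)]$, where $r_0\equiv1$ and $r_i(x)=x_i$. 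This equals $\widetilde\Sigma_{\sigma(j)\tau(j)}$ for the $(n+1)\times(n+1)$ matrix $\widetilde\Sigma=\begin{bmatrix}1&0\\0&\Sigma_{II}\end{bmatrix}$; the zero off-diagonal block comes from mean zero. Summing gives $\sum_{\sigma,\tau}\operatorname{sgn}(\sigma)\operatorname{sgn}(\tau)\prod_j\widetilde\Sigma_{\sigma(j)\tau(j)}=(n+1)!\det\widetilde\Sigma=(n+1)!\det\Sigma_{II}$. The identity used here is $\mathbb E\det(A)^2=(n+1)!\det(\mathbb E\,\text{Gram})$ for independent identically distributed columns, and it follows by substituting $\pi=\tau\sigma^{-1}$. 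Integrability is immediate, since each term is a product of at most quadratic moments of independent variables, and $\rho\in\mathcal P_2$.

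Hence $\mathcal V_n[\rho]=\frac{1}{(n!)^2}(n+1)!\sum_{|I|=n}\det\Sigma_{II}=\frac{n+1}{n!}\sum_{|I|=n}\det\Sigma_{II}$. The sum of the principal $n\times n$ minors of $\Sigma$ equals $e_n$ of its eigenvalues; this is the coefficient identity in $\det(tI+\Sigma)$. This yields the formula and finiteness. The step needing most care is the determinant-squared expectation bookkeeping, in particular justifying the Fubini exchange and the centering of the row of ones. Both are handled by the independence of the $x_j$ and by Definition~\ref{volume} being translation invariant.
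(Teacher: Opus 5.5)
Your proof is correct. It also does something the paper does not do: the paper gives no argument for this proposition and simply quotes it from \cite{pronzato2017extended,WZ}. Your route is self-contained and runs in four steps. Cauchy--Binet reduces $\det(B^\top B)$ to the coordinate minors $\det B_I$. The bordering identity $\det B_I=\det\widetilde X_{\{0\}\cup I}$ turns each of these into an $(n+1)\times(n+1)$ determinant with i.i.d.\ columns $(1,(x_j)_I)$. The double permutation expansion with $\pi=\tau\sigma^{-1}$ then gives $\mathbb E[\det(B_I)^2]=(n+1)!\det\Sigma_{II}$. Finally, $\sum_{|I|=n}\det\Sigma_{II}=e_n(\Sigma)$ closes the argument.

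The identity in the third step is exactly Wilks' full-dimensional formula \cite{wilks1932certain}, i.e.\ the case $n=d$ mentioned in the introduction, applied to the image of $\rho$ under projection onto the coordinates in $I$. So your argument makes transparent that the general-$n$ formula is the $n=d$ formula summed over coordinate $n$-planes. It also checks the finiteness claim directly under the bare $\mathcal P_2$ hypothesis: each term of the permutation sum factorizes over the independent vertices into moments of order at most two. Centering is not even needed. Without it, the column moment matrix is $\begin{bmatrix}1&m_I^\top\\ m_I&\Sigma_{II}+m_Im_I^\top\end{bmatrix}$, whose determinant is still $\det\Sigma_{II}$ by a Schur complement.

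One sentence should be deleted: your opening claim $\det(\widetilde X^\top\widetilde X)=\det(B^\top B)$ is false for $n<d$. After the column operations, the Gram matrix is $\begin{bmatrix}1+|x_0|^2&x_0^\top B\\ B^\top x_0&B^\top B\end{bmatrix}$. When $B$ has full rank, its determinant is $\det(B^\top B)\,(1+|P^\perp x_0|^2)$, where $P^\perp$ projects onto $(\operatorname{range}B)^\perp$. For instance, $d=2$, $n=1$, $x_0=(0,1)$, $x_1=(1,1)$ gives $2$ versus $1$. You half-retract this and never use it, so it is not a gap. The proof should simply start with Cauchy--Binet for $B$.
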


The previous representation immediately yields a useful quantitative bound in terms of the second moment
\[
M_2(\rho):=\int_{\mathbb R^d}|x|^2\,\d\rho(x).
\]

\begin{corollary}
	\label{cor:Vn-upper-bound}
	For every $\rho\in\mathcal P_2(\mathbb R^d)$,
	\[
	\mathcal V_n[\rho]
	\le
	\frac{n+1}{(n!)^2}\bigl(\operatorname{tr}\Sigma(\rho)\bigr)^n
	\le
	\frac{n+1}{(n!)^2}M_2(\rho)^n.
	\]
\end{corollary}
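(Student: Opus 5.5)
We derive the bound directly from Proposition \ref{prop:moment-representation}, in two inequalities. The eigenvalues $\lambda_1,\dots,\lambda_d$ of $\Sigma(\rho)$ are nonnegative, since $\Sigma(\rho)$ is positive semidefinite: $\xi^\top\Sigma(\rho)\xi=\int|\xi\cdot(x-m(\rho))|^2\,\d\rho\ge0$. So it suffices to compare $e_n(\lambda)$ with $(\sum_i\lambda_i)^n=(\operatorname{tr}\Sigma(\rho))^n$, and then $\operatorname{tr}\Sigma(\rho)$ with $M_2(\rho)$.

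\textbf{First inequality.} Expand $(\lambda_1+\cdots+\lambda_d)^n$ by the multinomial theorem. Every monomial $\lambda_{i_1}\cdots\lambda_{i_n}$ with distinct indices $i_1<\cdots<i_n$ appears with coefficient exactly $n!$. All remaining terms are nonnegative because $\lambda_i\ge0$. Hence $n!\,e_n(\lambda)\le(\operatorname{tr}\Sigma(\rho))^n$, and multiplying by $\frac{n+1}{n!}$ gives
\[
\mathcal V_n[\rho]=\frac{n+1}{n!}e_n(\lambda)\le\frac{n+1}{(n!)^2}(\operatorname{tr}\Sigma(\rho))^n.
\]

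\textbf{Second inequality.} Compute the trace directly:
\[
\operatorname{tr}\Sigma(\rho)=\int|x-m(\rho)|^2\,\d\rho=M_2(\rho)-|m(\rho)|^2\le M_2(\rho).
\]
This uses $\int x\,\d\rho=m(\rho)$ to expand the square. Since $t\mapsto t^n$ is monotone on $[0,\infty)$, the claim follows.

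There is no real obstacle here. The only points needing care are the nonnegativity of the eigenvalues, which is what lets us discard the cross terms in the multinomial expansion, and the finiteness of $M_2(\rho)$, which makes $\Sigma(\rho)$ well defined for $\rho\in\mathcal P_2(\mathbb R^d)$.
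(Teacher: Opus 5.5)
Your proof is correct and matches the paper's argument: the moment representation plus nonnegativity of the eigenvalues gives $n!\,e_n(\lambda)\le(\operatorname{tr}\Sigma(\rho))^n$ via the multinomial expansion, and then $\operatorname{tr}\Sigma(\rho)=M_2(\rho)-|m(\rho)|^2\le M_2(\rho)$. You also state explicitly two points the paper leaves implicit: the positive semidefiniteness of $\Sigma(\rho)$ and the monotonicity of $t\mapsto t^n$ on $[0,\infty)$.
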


\begin{proof}
	Since the eigenvalues of $\Sigma(\rho)$ are nonnegative, we have
	\[
	n!\,e_n(\lambda_1,\dots,\lambda_d)
	\le
	(\lambda_1+\cdots+\lambda_d)^n
	=
	\bigl(\operatorname{tr}\Sigma(\rho)\bigr)^n.
	\]
	Combining this with Proposition~\ref{prop:moment-representation}, we obtain
	\[
	\mathcal V_n[\rho]
	\le
	\frac{n+1}{(n!)^2}\bigl(\operatorname{tr}\Sigma(\rho)\bigr)^n.
	\]
	The second inequality follows from
	\[
	\operatorname{tr}\Sigma(\rho)
	=
	\int_{\mathbb R^d}|x-m(\rho)|^2\,\d\rho(x)
	\le
	\int_{\mathbb R^d}|x|^2\,\d\rho(x)
	=
	M_2(\rho).
	\]
\end{proof}

The covariance representation also shows that $\mathcal V_n$ is continuous with respect to the quadratic Wasserstein distance.

\begin{lemma}
	\label{lem:Vn-W2-continuity}
	The functional $\mathcal V_n$ is continuous on $\mathcal P_2(\mathbb R^d)$ with respect to $W_2$.
\end{lemma}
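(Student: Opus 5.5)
By Proposition~\ref{prop:moment-representation}, $\mathcal V_n[\rho]=\frac{n+1}{n!}e_n(\Sigma(\rho))$, where $e_n(\Sigma)$ is the $n$-th elementary symmetric polynomial of the eigenvalues of $\Sigma$. Equivalently, $e_n(\Sigma)$ is the sum of all $n\times n$ principal minors of $\Sigma$, so it is a polynomial in the entries of $\Sigma$ and hence continuous on symmetric matrices. It therefore suffices to show that $\rho\mapsto\Sigma(\rho)$ is continuous from $(\mathcal P_2(\mathbb R^d),W_2)$ into $\mathbb R^{d\times d}$. Continuity of $\mathcal V_n$ then follows by composition.

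To prove continuity of $\Sigma$, I write
\[
\Sigma(\rho)=\int_{\mathbb R^d} xx^\top\,\d\rho(x)-m(\rho)m(\rho)^\top.
\]
The maps $\rho\mapsto m(\rho)$ and $\rho\mapsto\int xx^\top\,\d\rho$ are integrals of functions with at most quadratic growth. By the standard characterization of $W_2$ convergence (e.g.\ \cite{AGS2005,Vil}), $W_2(\rho_k,\rho)\to0$ holds if and only if $\rho_k\to\rho$ narrowly and $M_2(\rho_k)\to M_2(\rho)$. Under these conditions $\int\varphi\,\d\rho_k\to\int\varphi\,\d\rho$ for every continuous $\varphi$ with $|\varphi(x)|\le C(1+|x|^2)$. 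Applying this to $\varphi(x)=x_i$ and $\varphi(x)=x_ix_j$ gives convergence of the mean and of the second-moment matrix, and hence $\Sigma(\rho_k)\to\Sigma(\rho)$.

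A quantitative alternative avoids citing that characterization. Take an optimal coupling $\gamma$ of $\rho_k$ and $\rho$. Then $|m(\rho_k)-m(\rho)|\le W_2(\rho_k,\rho)$ by Jensen's inequality. For the second moments, I use
\[
x x^\top-yy^\top=(x-y)x^\top+y(x-y)^\top
\]
and Cauchy--Schwarz to get
\[
\Bigl|\int xx^\top\,\d\rho_k-\int yy^\top\,\d\rho\Bigr|\le W_2(\rho_k,\rho)\bigl(M_2(\rho_k)^{1/2}+M_2(\rho)^{1/2}\bigr).
\]
Finally, $M_2(\rho_k)^{1/2}\le M_2(\rho)^{1/2}+W_2(\rho_k,\rho)$, which bounds the second moments uniformly along the sequence. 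This yields a local Lipschitz estimate for $\Sigma$, and therefore for $\mathcal V_n$, on sets with bounded second moment.

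No step is a real obstacle. The only point requiring care is that narrow convergence alone is insufficient, because second moments can escape to infinity. This is exactly what the moment convergence built into $W_2$, or the coupling estimate above, controls.
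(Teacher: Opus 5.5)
Your proof is correct and follows essentially the paper's route. The paper also reduces, via the moment representation, to $W_2$-continuity of $\Sigma(\rho)$, obtained from convergence of integrals of quadratically growing test functions (citing Villani's Theorem 6.9), and notes that $e_n$ is a polynomial in the matrix entries (as the coefficient of $t^n$ in $\det(I+tA)$, which is equivalent to your principal-minor description). Your quantitative coupling alternative is also valid and essentially anticipates the paper's later Lemma~\ref{lem:moment-covariance-stability}.
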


\begin{proof} 
By Proposition~\ref{prop:moment-representation}, \[ \mathcal V_n[\rho] = \frac{n+1}{n!}\,e_n(\Sigma(\rho)). \] 
Let $\rho_k\to\rho$ in $W_2$. By \cite[Theorem~6.9]{Vil}, for any continuous function $\psi$ satisfying 
\[ |\psi(x)|\le C(1+|x|^2), \] 
we have 
\[ \int_{\mathbb R^d}\psi(x)\,\d\rho_k(x)\to \int_{\mathbb R^d}\psi(x)\,\d\rho(x). \] 
Applying this to $\psi(x)=x_i$ and $\psi(x)=x_ix_j$, we obtain
\[ m(\rho_k)\to m(\rho) \qquad\text{and}\qquad \int_{\mathbb R^d}xx^\top\,\d\rho_k(x)\to \int_{\mathbb R^d}xx^\top\,\d\rho(x) \] 
entrywise. Since 
\[ \Sigma(\rho)=\int_{\mathbb R^d}xx^\top\,\d\rho(x)-m(\rho)m(\rho)^\top, \] 
it follows that \[ \Sigma(\rho_k)\to\Sigma(\rho) \] entrywise. To see the continuity of $e_n$, observe that 
\[ e_n(\lambda_1(A),\dots,\lambda_d(A)) \] 
is the coefficient of $t^n$ in $\det(I+tA)$. Hence the map 
\[ A\mapsto e_n(\lambda_1(A),\dots,\lambda_d(A)) \] 
is a polynomial in the entries of $A$, and is therefore continuous with respect to entrywise convergence. Consequently, \[ e_n(\Sigma(\rho_k))\to e_n(\Sigma(\rho)). \] 
Therefore, \[ \mathcal V_n[\rho_k]\to \mathcal V_n[\rho]. \] 
\end{proof}

In Section~\ref{sec:3}, we will compute the first variation of $\mathcal V_n$ by differentiating its covariance representation along perturbations of the underlying measure. For this purpose, we also recall the differential formula for the elementary symmetric polynomial.

\begin{proposition}[Differential of $e_n$ {\cite[equation (2.3)]{gursky2003fully}}]
	\label{prop:differential-en}
	Let $1\le n\le d$. For a symmetric matrix $\Sigma\in\mathbb R^{d\times d}$, let $e_n(\Sigma)$ denote the $n$-th elementary symmetric polynomial of the eigenvalues of $\Sigma$. Then, for every symmetric matrix $A\in\mathbb R^{d\times d}$,
	\[
	\d e_n(\Sigma)[A]
	=
	\operatorname{tr}\bigl(T_{n-1}(\Sigma)A\bigr),
	\]
	where $T_{n-1}$ is the $(n-1)$-th Newton transformation  defined by
	\[
	T_{n-1}(\Sigma)
	:=
	\sum_{k=0}^{n-1}(-1)^k e_{n-1-k}(\Sigma)\Sigma^k.
	\]
	Here we use the convention $e_0(\Sigma)=1$ and $\Sigma^0=I$.
\end{proposition}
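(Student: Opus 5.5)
We would prove the formula by expressing $e_n$ through a characteristic-type determinant and then applying Jacobi's formula for the derivative of a determinant. As already observed in the proof of Lemma~\ref{lem:Vn-W2-continuity}, for every symmetric $\Sigma$ the quantity $e_n(\Sigma)$ is the coefficient of $t^n$ in the polynomial
\[
p_\Sigma(t):=\det(I+t\Sigma)=\sum_{k=0}^d e_k(\Sigma)t^k .
\]
Hence $\Sigma\mapsto e_n(\Sigma)$ is a polynomial in the entries of $\Sigma$, so it is smooth. Its directional derivative in a symmetric direction $A$ is therefore the coefficient of $t^n$ in
\[
\frac{\d}{\d s}\Big|_{s=0}\det\bigl(I+t(\Sigma+sA)\bigr).
\]
Differentiating in $s$ commutes with extracting a $t$-coefficient, because everything is a polynomial in $(s,t)$.

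By Jacobi's formula $\d\det(M)[B]=\operatorname{tr}(\operatorname{adj}(M)B)$, this derivative equals
\[
t\,\operatorname{tr}\bigl(\operatorname{adj}(I+t\Sigma)A\bigr).
\]
So it suffices to show that the coefficient of $t^{n-1}$ in the matrix polynomial $\operatorname{adj}(I+t\Sigma)$ is exactly $T_{n-1}(\Sigma)$.

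For $|t|$ small, $I+t\Sigma$ is invertible, and we can use a Neumann series:
\[
\operatorname{adj}(I+t\Sigma)=\det(I+t\Sigma)(I+t\Sigma)^{-1}
=\Bigl(\sum_{k\ge0}e_k(\Sigma)t^k\Bigr)\Bigl(\sum_{j\ge0}(-1)^jt^j\Sigma^j\Bigr).
\]
Collecting the $t^{n-1}$ terms gives
\[
\sum_{j=0}^{n-1}(-1)^je_{n-1-j}(\Sigma)\Sigma^j=T_{n-1}(\Sigma).
\]
The left-hand side is a polynomial in $t$, and the right-hand side is a convergent power series near $0$. They agree on a neighbourhood of $t=0$, so their coefficients coincide. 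This yields $\d e_n(\Sigma)[A]=\operatorname{tr}(T_{n-1}(\Sigma)A)$.

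The only delicate point is this coefficient identification. It needs the remark that the identity holds on a neighbourhood of $t=0$, where the Neumann series converges, and therefore holds coefficientwise.

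As a consistency check, one can diagonalize $\Sigma=P^\top DP$ with $D=\operatorname{diag}(\lambda_1,\dots,\lambda_d)$. Then $T_{n-1}(\Sigma)=P^\top\operatorname{diag}\bigl(e_{n-1}(\widehat{\lambda_i})\bigr)P$, by the identity $\sum_j(-1)^je_{n-1-j}(\lambda)\lambda_i^j=e_{n-1}(\widehat{\lambda_i})$. This matches $\partial e_n/\partial\lambda_i=e_{n-1}(\widehat{\lambda_i})$, and it is the form used later in Corollary~\ref{cor:eigenvalue-system}.
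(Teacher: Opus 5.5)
Your proof is correct. Writing $e_n(\Sigma)$ as the $t^n$-coefficient of $\det(I+t\Sigma)$, differentiating with Jacobi's formula, and identifying the $t^{n-1}$-coefficient of $\operatorname{adj}(I+t\Sigma)$ with $T_{n-1}(\Sigma)$ is a complete argument. You also justify the coefficient comparison properly: the polynomial $\operatorname{adj}(I+t\Sigma)$ and the convergent series $\det(I+t\Sigma)(I+t\Sigma)^{-1}$ agree near $t=0$.

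The paper does not prove this proposition; it quotes it from Gursky--Viaclovsky. The only places where it reasons about this derivative itself are at the eigenvalue level: the telescoping identity $[T_{n-1}(D)]_{ii}=e_{n-1}(\widehat\lambda_i)$ in Corollary~\ref{cor:eigenvalue-system}, and the formula $\partial e_n/\partial\lambda_k=e_{n-1}(\widehat\lambda_k)$ in Lemma~\ref{lem:metric-speed-rate}. That reasoning only covers directions $A$ commuting with $\Sigma$. The proposition, however, is applied in Propositions~\ref{prop:first-variation-Vn}, \ref{prop:chain-rule-Vn} and \ref{prop:slope-identity-Vn} with directions such as $B=\int\bigl((x-m)\xi^\top+\xi(x-m)^\top\bigr)\,\d\rho$, which need not commute with $\Sigma$. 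Deriving the general formula from eigenvalues would require first-order eigenvalue perturbation theory, which is awkward when eigenvalues coincide.

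Your determinant route never touches eigenvalues, so repeated eigenvalues and non-commuting directions cause no difficulty. It is also the standard proof of the cited identity, so it makes the paper self-contained at this point.

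If you want to drop the analytic step as well, compare coefficients in the polynomial identity $\operatorname{adj}(I+t\Sigma)(I+t\Sigma)=\det(I+t\Sigma)\,I$. Writing $\operatorname{adj}(I+t\Sigma)=\sum_k C_kt^k$, this gives $C_0=I$ and $C_k=e_k(\Sigma)I-C_{k-1}\Sigma$. That is exactly the recursion $T_k(\Sigma)=e_k(\Sigma)I-T_{k-1}(\Sigma)\Sigma$ satisfied by the Newton transformations, so $C_{n-1}=T_{n-1}(\Sigma)$ by induction.
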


Since $T_{n-1}(\Sigma)$ is a polynomial in $\Sigma$, it is symmetric whenever $\Sigma$ is symmetric and commutes with $\Sigma$. This simple observation will be used repeatedly when deriving the closed evolution equation for the covariance matrix.

\subsection{Wasserstein framework and solution notions} \label{sec:2.3}

In this subsection, we collect the basic notions from the quadratic Wasserstein framework that will be used later. 

Throughout this subsection, $(\mathcal P_2(\mathbb R^d),W_2)$ denotes the quadratic Wasserstein space over $\mathbb R^d$. The solution framework used in this paper involves two related notions. On the PDE side, we use weak solutions to continuity equations, which are natural for the explicit construction by transition maps. On the variational side, we use maximal slope solutions in the sense of metric gradient flows. We will later
show that, for the functional $\mathcal V_n$, the two formulations are connected through the explicit identification of the Wasserstein gradient field.

\begin{definition}[Absolutely continuous curves {\cite[Definition~1.1.1]{AGS2005}}] 
Let $(X,d)$ be a complete metric space and let $p\in[1,\infty]$. We denote by $AC^p(a,b;X)$ the set of all curves $\gamma:(a,b)\to X$ for which there exists a function $g\in L^p(a,b)$ such that 
\[ d(\gamma(s),\gamma(t)) \le \int_s^t g(r)\,\d r, \qquad a<s<t<b. \] 
\end{definition} 

For absolutely continuous curves, one can define an intrinsic notion of speed
depending only on the metric structure. This is given by the metric derivative,
which exists for almost every time along such curves.

\begin{proposition}[Metric derivative {\cite[Theorem~1.1.2]{AGS2005}}]
	Let $(X,d)$ be a complete metric space and let $p\in[1,\infty]$. If
	$\gamma\in AC^p(a,b;X)$, then the limit
	\[
	|\gamma'|(t):=\lim_{h\to0}\frac{d(\gamma(t+h),\gamma(t))}{|h|}
	\]
	exists for a.e.\ $t\in(a,b)$, and the function $|\gamma'|$ belongs to
	$L^p(a,b)$. Moreover,
	\[
	d(\gamma(s),\gamma(t))
	\le
	\int_s^t |\gamma'|(r)\,\d r,
	\qquad a<s<t<b.
	\]
	In addition, if $g\in L^p(a,b)$ satisfies
	\[
	d(\gamma(s),\gamma(t))
	\le
	\int_s^t g(r)\,\d r,
	\qquad a<s<t<b,
	\]
	then $|\gamma'|(t)\le g(t)$ for a.e.\ $t\in(a,b)$.
\end{proposition}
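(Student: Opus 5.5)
The plan is the standard Ambrosio--Gigli--Savar\'e argument: reduce the metric-valued curve to countably many real-valued absolutely continuous functions, using the separability of $\gamma((a,b))$.

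\textbf{Step 1 (separable reduction).} Let $\gamma\in AC^p(a,b;X)$ with an admissible $g\in L^p(a,b)$. The image $\gamma((a,b))$ is the continuous image of an interval, hence separable, so we choose a countable set $\{y_k\}_{k\ge1}$ dense in it. For each $k$ set $\varphi_k(t):=d(\gamma(t),y_k)$. By the triangle inequality,
\[
|\varphi_k(t)-\varphi_k(s)|\le d(\gamma(s),\gamma(t))\le\int_s^t g(r)\,\d r .
\]
Hence each $\varphi_k$ is absolutely continuous, is differentiable a.e., and satisfies $|\varphi_k'|\le g$ a.e. Define
\[
m(t):=\sup_k|\varphi_k'(t)|.
\]
It is measurable as a countable supremum, satisfies $m\le g$ a.e., and so lies in $L^p(a,b)$.

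\textbf{Step 2 (integral bound with $m$).} By density, $d(\gamma(s),\gamma(t))=\sup_k|\varphi_k(t)-\varphi_k(s)|$; indeed, taking $y_k\to\gamma(s)$ makes $|\varphi_k(t)-\varphi_k(s)|\to d(\gamma(s),\gamma(t))$. Each term satisfies
\[
|\varphi_k(t)-\varphi_k(s)|\le\int_s^t|\varphi_k'|\,\d r\le\int_s^t m\,\d r,
\]
so $d(\gamma(s),\gamma(t))\le\int_s^t m(r)\,\d r$. This shows $m$ is itself an admissible function, and it is dominated by every admissible $g$. The dominance holds because Step 1 applies verbatim to an arbitrary admissible $g$, and $m$ does not depend on $g$.

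\textbf{Step 3 (existence of the limit).} Fix $t$ that is a Lebesgue point of $m$ and a point of differentiability of every $\varphi_k$; almost every $t$ qualifies, since this excludes only countably many null sets. For the upper bound, Step 2 gives
\[
\limsup_{h\to0}\frac{d(\gamma(t+h),\gamma(t))}{|h|}\le\lim_{h\to0}\frac1{|h|}\Bigl|\int_t^{t+h}m\,\d r\Bigr|=m(t).
\]
For the lower bound, fix $k$. Then
\[
\liminf_{h\to0}\frac{d(\gamma(t+h),\gamma(t))}{|h|}\ge\lim_{h\to0}\frac{|\varphi_k(t+h)-\varphi_k(t)|}{|h|}=|\varphi_k'(t)|,
\]
and taking the supremum over $k$ gives a lower bound of $m(t)$. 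Thus $|\gamma'|(t)$ exists and equals $m(t)$ a.e. The remaining claims, namely $|\gamma'|\in L^p$, the integral inequality, and $|\gamma'|\le g$ a.e., then follow from Steps 1--2.

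The only delicate point is Step 2's identity $d(\gamma(s),\gamma(t))=\sup_k|\varphi_k(t)-\varphi_k(s)|$ together with the choice of a common full-measure set of good times $t$. Both are handled by the countability of $\{y_k\}$, so I do not expect a genuine obstacle.
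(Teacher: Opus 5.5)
Your argument is correct and is essentially the proof of \cite[Theorem~1.1.2]{AGS2005}, which the paper cites rather than reproves. That proof uses the same ingredients: the countable family $\varphi_k(t)=d(\gamma(t),y_k)$ over a dense subset of the image, the candidate $m=\sup_k|\varphi_k'|$, the identity $d(\gamma(s),\gamma(t))=\sup_k|\varphi_k(t)-\varphi_k(s)|$, and the Lebesgue-point squeeze giving $|\gamma'|=m\le g$ a.e.
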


In what follows, we will mainly work with curves in
$AC^2(0,T;\mathcal P_2(\mathbb R^d))$. 
In the metric formulation of gradient flows, the metric derivative plays the
role of the speed of a curve. The corresponding notion for a functional is the
slope, which measures the maximal instantaneous decrease of the functional per
unit distance.

\begin{definition}[Slope {\cite[Definition~1.2.4]{AGS2005}}]
	Let $(X,d)$ be a complete metric space and let $\phi:X\to(-\infty,\infty]$. The slope of $\phi$ at $v\in X$ is defined by
	\[
	|\partial\phi|(v)
	:=
	\limsup_{w\to v}\frac{(\phi(v)-\phi(w))_+}{d(v,w)}.
	\]
\end{definition}

The slope is the basic object used to formulate gradient-flow solutions in a
metric space. Before introducing the PDE formulation, we also recall the
notion of narrow continuity for curves of probability measures.

\begin{definition}[Narrow continuity]
	A curve $(\rho_t)_{t\in[0,T]}\subset\mathcal P(\mathbb R^d)$ is said to be narrowly continuous if
	\[
	t\longmapsto \int_{\mathbb R^d}\varphi(x)\,\d\rho_t(x)
	\]
	is continuous on $[0,T]$ for every $\varphi\in C_b(\mathbb R^d)$.
\end{definition}

A basic fact in Wasserstein geometry is that absolutely continuous curves can be
represented by velocity fields through the continuity equation. The following
result is a special case, with $p=2$, of the general continuity-equation
characterization of absolutely continuous curves in Wasserstein spaces. Since
the present paper is concerned with the quadratic Wasserstein space, we record
only the $W_2$ version.

\begin{proposition}[Continuity equation representation {\cite[Theorem~8.3.1]{AGS2005}}] \label{prop:continuity-equation} 
Let $(\rho_t)_{t\in[0,T]}\subset\mathcal P_2(\mathbb R^d)$. \begin{enumerate} 
\item If $\rho\in AC^2(0,T;(\mathcal P_2(\mathbb R^d),W_2))$, then there exists a Borel vector field 
\[ v:(0,T)\times\mathbb R^d\to\mathbb R^d, \qquad v_t(x):=v(t,x), \] 
such that 
\[ \int_0^T \|v_t\|_{L^2(\rho_t;\mathbb R^d)}^2\,\d t<\infty, \] and 
\[ \partial_t\rho_t+\nabla\cdot(\rho_t v_t)=0 \] 
holds in the sense of distributions on $(0,T)\times\mathbb R^d$. Moreover, 
\[ \|v_t\|_{L^2(\rho_t;\mathbb R^d)} \le |\rho'|(t) \qquad\text{for a.e.\ }t\in(0,T). \] 
\item Conversely, if $(\rho_t)_{t\in[0,T]}$ is narrowly continuous and there exists a Borel vector field
\[ v:(0,T)\times\mathbb R^d\to\mathbb R^d, \qquad v_t(x):=v(t,x), \] 
such that 
\[ \partial_t\rho_t+\nabla\cdot(\rho_t v_t)=0 \] 
in the sense of distributions on $(0,T)\times\mathbb R^d$ and 
\[ \int_0^T \|v_t\|_{L^2(\rho_t;\mathbb R^d)}^2\,\d t<\infty, \] then $\rho\in AC^2(0,T;(\mathcal P_2(\mathbb R^d),W_2))$ and 
\[ |\rho'|(t)\le \|v_t\|_{L^2(\rho_t;\mathbb R^d)} \qquad\text{for a.e.\ }t\in(0,T). \] 
\end{enumerate} 
\end{proposition}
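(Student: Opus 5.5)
This is the standard $W_2$ version of \cite[Theorem~8.3.1]{AGS2005}, so I would recall the structure of that argument rather than reprove every measure-theoretic detail. Both directions rest on one idea: an $AC^2$ curve in $\mathcal P_2(\mathbb R^d)$ is driven by a time-dependent tangent velocity field, and the metric speed equals the minimal $L^2(\rho_t)$-norm of admissible fields.

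\emph{Part (1).} I would first test the curve against smooth functions. Fix $\varphi\in C_c^\infty(\mathbb R^d)$ and let $\gamma_{s,t}$ be an optimal plan between $\rho_s$ and $\rho_t$. A Taylor expansion gives
\[
\Bigl|\int\varphi\,\d\rho_t-\int\varphi\,\d\rho_s\Bigr|\le \int|\nabla\varphi(x)||y-x|\,\d\gamma_{s,t}+\tfrac12\|D^2\varphi\|_\infty W_2^2(\rho_s,\rho_t).
\]
Hence $t\mapsto\int\varphi\,\d\rho_t$ is absolutely continuous. Dividing by $h=t-s$ and using that optimal plans converge to the diagonal as $h\to0$, one obtains at a.e.\ $t$
\[
\Bigl|\frac{\d}{\d t}\int\varphi\,\d\rho_t\Bigr|\le |\rho'|(t)\,\|\nabla\varphi\|_{L^2(\rho_t)}.
\]
Next, consider the space-time measure $\mu=\int\rho_t\,\d t$ on $(0,T)\times\mathbb R^d$ and the linear functional $L(\nabla\varphi):=-\int_0^T\int\partial_t\varphi\,\d\rho_t\,\d t$, defined on gradients of test functions $\varphi\in C_c^\infty((0,T)\times\mathbb R^d)$. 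The bound above shows that $L$ is bounded in $L^2(\mu)$ by $\bigl(\int|\rho'|^2\bigr)^{1/2}$. Riesz representation on the closure of $\{\nabla\varphi\}$ in $L^2(\mu;\mathbb R^d)$ then produces $v$ in that closure with $L(\nabla\varphi)=\int\langle v,\nabla\varphi\rangle\,\d\mu$, which is exactly the distributional continuity equation. To get the \emph{pointwise-in-time} estimate $\|v_t\|_{L^2(\rho_t)}\le|\rho'|(t)$, I would repeat the Riesz argument with the weight $\eta(t)$ for arbitrary nonnegative $\eta\in C_c(0,T)$, and then localize in $t$.

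\emph{Part (2).} I would regularize. Convolve in space, $\rho^\varepsilon_t=\rho_t*k_\varepsilon$, and set $v^\varepsilon_t=(\rho_tv_t)*k_\varepsilon/\rho^\varepsilon_t$. The regularized field $v^\varepsilon$ is locally Lipschitz, and Jensen's inequality gives $\|v^\varepsilon_t\|_{L^2(\rho^\varepsilon_t)}\le\|v_t\|_{L^2(\rho_t)}$. The smooth equation is then solved by characteristics, $\rho^\varepsilon_t=(X^\varepsilon_{s,t})_\#\rho^\varepsilon_s$, where $X^\varepsilon$ is the flow of $v^\varepsilon$. Coupling $\rho^\varepsilon_s$ and $\rho^\varepsilon_t$ through this flow map gives
\[
W_2(\rho^\varepsilon_s,\rho^\varepsilon_t)\le\int_s^t\|v^\varepsilon_r\|_{L^2(\rho^\varepsilon_r)}\,\d r\le\int_s^t\|v_r\|_{L^2(\rho_r)}\,\d r.
\]
Letting $\varepsilon\to0$ and using lower semicontinuity of $W_2$ under narrow convergence passes the estimate to $\rho$. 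This shows $\rho\in AC^2$, and the a.e.\ bound $|\rho'|(t)\le\|v_t\|_{L^2(\rho_t)}$ follows from the minimality property of the metric derivative.

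\emph{Main obstacle.} The delicate step is the characteristic representation in Part (2). The regularized field is only locally Lipschitz, and the measures are not compactly supported, so one must show that characteristics do not blow up for $\rho^\varepsilon$-a.e.\ initial point. I would handle this with a uniqueness argument for the smooth continuity equation, as in \cite[Proposition~8.1.8]{AGS2005}, together with the integrability of $v$. Since this is a cited background result, in the paper it suffices to invoke \cite[Theorem~8.3.1]{AGS2005} with $p=2$.
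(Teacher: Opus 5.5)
Your proposal is correct and takes the same route as the paper, which simply invokes \cite[Theorem~8.3.1]{AGS2005} with $p=2$. Your sketch follows the structure of that proof: Riesz representation of $L(\nabla\varphi)=-\int_0^T\!\int\partial_t\varphi\,\d\rho_t\,\d t$ on the $L^2$-closure of spatial gradients for (1), and spatial regularization with the characteristic representation of \cite[Proposition~8.1.8]{AGS2005} plus narrow lower semicontinuity of $W_2$ for (2). Keep in mind that the mollifier in (2) must be strictly positive (e.g.\ Gaussian, not compactly supported), so that $v^\varepsilon$ is defined everywhere and locally Lipschitz with time-integrable local bounds.
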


We shall also use the following first-order approximation property of
absolutely continuous curves in Wasserstein space. It says that, at almost
every time, the infinitesimal displacement of the curve is represented to
first order by pushing the measure forward along its tangent velocity field.

\begin{proposition}[First-order approximation by the tangent velocity field {\cite[Proposition~8.4.6]{AGS2005}}]
\label{prop:first-order-approximation}

Let $\mu\in AC^2(0,T;(\mathcal P_2(\mathbb R^d),W_2))$, and let $w_t$ be the minimal velocity field associated with $\mu$, whose existence is guaranteed by Proposition~\ref{prop:continuity-equation}(1). Thus,
\[
\partial_t\mu_t+\nabla\cdot(\mu_t w_t)=0
\]
holds in the sense of distributions on $(0,T)\times\mathbb R^d$, and
\[
\|w_t\|_{L^2(\mu_t)}=|\mu'|(t)
\qquad\text{for a.e. }t\in(0,T).
\]
Then, for a.e. $t\in(0,T)$,
\[
\lim_{h\to0}
\frac{
W_2\bigl(\mu_{t+h},(\mathrm{Id}+h w_t)_\#\mu_t\bigr)
}{|h|}
=0.
\]
\end{proposition}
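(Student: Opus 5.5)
The plan is to follow the blow-up argument of \cite[Proposition~8.4.6]{AGS2005}. Optimal plans between $\mu_t$ and $\mu_{t+h}$, rescaled by $h$, should converge to the graph of $w_t$ over $\mu_t$. From that, the $W_2$ estimate follows by composing couplings.

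\textbf{Step 1: a good set of times.} We fix a time $t$ with the following properties:
\begin{itemize}
\item $|\mu'|(t)$ exists and $\|w_t\|_{L^2(\mu_t)}=|\mu'|(t)$;
\item $t$ is a Lebesgue point of $s\mapsto \|w_s\|^2_{L^2(\mu_s)}$;
\item $t$ is a Lebesgue point of $s\mapsto\int\nabla\varphi\cdot w_s\,\d\mu_s$ for every $\varphi$ in a fixed countable family $\mathcal D\subset C_c^\infty(\mathbb R^d)$ that is dense in $C^1_c$.
\end{itemize}
Such times have full measure. We also use that the minimal field $w_t$ lies in the tangent space $\overline{\{\nabla\varphi:\varphi\in C_c^\infty\}}^{L^2(\mu_t)}$; this is part of the minimality characterization in Proposition~\ref{prop:continuity-equation}.

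\textbf{Step 2: rescaled optimal plans.} Let $\gamma_h$ be an optimal plan between $\mu_t$ and $\mu_{t+h}$, and set
\[
\nu_h:=\bigl(x,\tfrac{y-x}{h}\bigr)_\#\gamma_h .
\]
Then $\int|v|^2\,\d\nu_h=W_2^2(\mu_t,\mu_{t+h})/h^2$, so $\limsup_{h\to0}\int|v|^2\,\d\nu_h\le|\mu'|(t)^2=\|w_t\|^2_{L^2(\mu_t)}$.

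Hence $(\nu_h)$ is tight, and along a subsequence $\nu_h\to\nu$ narrowly, with first marginal $\mu_t$. Let $\bar v(x):=\int v\,\d\nu_x(v)$ be the barycenter of the disintegration of $\nu$ with respect to $\mu_t$.

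For $\varphi\in\mathcal D$, a Taylor expansion gives
\[
\int\varphi\,\d\mu_{t+h}-\int\varphi\,\d\mu_t=h\int\nabla\varphi(x)\cdot v\,\d\nu_h+O\bigl(\|\nabla^2\varphi\|_\infty\,W_2^2(\mu_t,\mu_{t+h})\bigr).
\]
The left-hand side also equals $\int_t^{t+h}\int\nabla\varphi\cdot w_s\,\d\mu_s\,\d s$. Dividing by $h$ and using the Lebesgue-point property, we get $\int\nabla\varphi\cdot\bar v\,\d\mu_t=\int\nabla\varphi\cdot w_t\,\d\mu_t$. Passing to the limit in $\int\nabla\varphi\cdot v\,\d\nu_h$ is justified by uniform integrability, which comes from the second-moment bound.

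It follows that $\bar v-w_t$ is orthogonal to the tangent space, while $w_t$ belongs to it. Therefore $\|w_t\|\le\|\bar v\|$. Combining this with Jensen and lower semicontinuity,
\[
\|\bar v\|^2\le\int|v|^2\,\d\nu\le\liminf\int|v|^2\,\d\nu_h\le\|w_t\|^2 .
\]
All these inequalities are equalities. This forces $\bar v=w_t$ and $\nu_x=\delta_{w_t(x)}$, so $\nu=(\mathrm{Id},w_t)_\#\mu_t$. It also gives convergence of second moments, so the narrow convergence upgrades to convergence in $W_2$ on $\mathbb R^{2d}$. Since the limit is independent of the subsequence, the whole family converges.

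\textbf{Step 3: conclusion.} The plan $\bigl(x+hw_t(x),\,y\bigr)_\#\gamma_h$ couples $(\mathrm{Id}+hw_t)_\#\mu_t$ with $\mu_{t+h}$. Hence
\[
\frac{W_2\bigl(\mu_{t+h},(\mathrm{Id}+hw_t)_\#\mu_t\bigr)}{|h|}\le\Bigl(\int|v-w_t(x)|^2\,\d\nu_h(x,v)\Bigr)^{1/2},
\]
and it remains to show the right-hand side tends to $0$.

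\textbf{Main obstacle.} The integrand $|v-w_t(x)|^2$ is not continuous, because $w_t$ is only in $L^2(\mu_t)$. We handle this by approximating $w_t$ in $L^2(\mu_t)$ by some $\tilde w\in C_c(\mathbb R^d;\mathbb R^d)$ and splitting
\[
|v-w_t|\le|v-\tilde w|+|\tilde w-w_t| .
\]
The second term has $\nu_h$-integral $\int|\tilde w-w_t|^2\,\d\mu_t$, since the first marginal of $\nu_h$ is exactly $\mu_t$, and this is small. The first term has continuous integrand of quadratic growth. By the $W_2$ convergence $\nu_h\to\nu$, its integral converges to $\int|w_t-\tilde w|^2\,\d\mu_t$, which is also small. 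Letting $\tilde w\to w_t$ finishes the argument.
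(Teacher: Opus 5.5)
Your proposal is correct and is essentially the blow-up argument of \cite[Proposition~8.4.6]{AGS2005}; the paper gives no proof of its own beyond that citation. As there, the rescaled optimal plans converge in $W_2$ to $(\mathrm{Id},w_t)_\#\mu_t$ by the barycenter and tangent-space minimality argument, and the estimate then follows from the explicit coupling together with an approximation of $w_t$ by continuous fields, using that the first marginal is fixed. The one ingredient you import rather than prove, namely that $w_t\in\overline{\{\nabla\varphi:\varphi\in C_c^\infty\}}^{L^2(\mu_t)}$ for a.e.\ $t$, is genuinely needed (without it you would only get that $\bar v$ and $w_t$ have the same tangential projection), and it is supplied by the same source.
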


The previous results show that $AC^2$ curves in the quadratic Wasserstein space admit square-integrable velocity fields. The weak formulation of the continuity equation itself, however, only requires narrow continuity of the curve, provided that the terms in the weak formulation below are well defined. We now state this formulation for a prescribed velocity field and an initial datum; the initial condition is incorporated by allowing test functions that may be nonzero at $t=0$.

\begin{definition}[Weak solution to the continuity equation] \label{def:weak-solution} 
Let $\rho_0\in\mathcal P_2(\mathbb R^d)$ and let 
\[ v:(0,T)\times\mathbb R^d\to\mathbb R^d, \qquad v_t(x):=v(t,x), \] 
be a Borel vector field such that 
\[ \int_0^{T'}\int_K |v_t(x)|\,\d\rho_t(x)\,\d t<\infty \] 
for every $T'\in(0,T)$ and every compact set $K\subset\mathbb R^d$. A narrowly continuous curve $(\rho_t)_{t\in[0,T]}\subset\mathcal P_2(\mathbb R^d)$ is called a weak solution to 
\[ \partial_t\rho_t+\nabla\cdot(\rho_t v_t)=0, \qquad \rho|_{t=0}=\rho_0, \] 
if 
\[ \int_0^T\int_{\mathbb R^d} \bigl(\partial_t\varphi(t,x)+v_t(x)\cdot\nabla\varphi(t,x)\bigr)\, \d\rho_t(x)\,\d t + \int_{\mathbb R^d}\varphi(0,x)\,\d\rho_0(x)=0 \] 
for every $\varphi\in C_c^\infty([0,T)\times\mathbb R^d)$. \end{definition}

We now turn to the variational formulation of gradient flows in metric spaces. The key object in this formulation is a strong upper gradient, which controls the variation of a functional along absolutely continuous curves.

\begin{definition}[Strong upper gradient {\cite[Definition~1.2.1]{AGS2005}}]
	Let $(X,d)$ be a complete metric space and let $\phi:X\to(-\infty,\infty]$. A function
	\[
	g:X\to[0,\infty]
	\]
	is called a strong upper gradient for $\phi$ if for every $\gamma\in AC(a,b;X)$,
	\[
	|\phi(\gamma(t))-\phi(\gamma(s))|
	\le
	\int_s^t g(\gamma(r))\,|\gamma'|(r)\,\d r,
	\qquad
	a<s\le t<b.
	\]
\end{definition}

Once a strong upper gradient is available, gradient-flow solutions can be formulated by an energy dissipation inequality involving the metric derivative of the curve and the upper gradient of the functional. This leads to the following notion of maximal slope solution.

\begin{definition}[Maximal slope solution {\cite[Definition~1.3.2]{AGS2005}}]
	\label{def:maximal-slope}
	Let
	\[
	F:\mathcal P_2(\mathbb R^d)\to(-\infty,\infty]
	\]
	be a functional with $F^{-1}(\mathbb{R})\neq \varnothing$, and let $g$ be a strong upper gradient for $F$. A curve
	\[
	\rho:[0,T]\to\mathcal P_2(\mathbb R^d)
	\]
	is called a maximal slope solution for $F$ with respect to $g$ if
	\[
	\rho\in AC^2(0,T;(\mathcal P_2(\mathbb R^d),W_2)),
	\qquad
	F(\rho_t)<\infty \quad \text{for all } t\in[0,T],
	\]
	and there exists a nonincreasing function $\varphi:[0,T]\to\mathbb R$ such that
	\[
	\varphi(t)=F(\rho_t)\quad\text{for a.e.\ }t\in(0,T)
	\]
	and
	\[
	\varphi'(t)\le -\frac12|\rho'|^2(t)-\frac12 g^2(\rho_t)
	\qquad\text{for a.e.\ }t\in(0,T).
	\]
\end{definition}

By the monotonicity of $\varphi$, the differential inequality in Definition~\ref{def:maximal-slope} yields the integrated energy dissipation inequality 
\[ \varphi(t) + \frac12\int_s^t |\rho'|^2(r)\,\d r + \frac12\int_s^t g^2(\rho_r)\,\d r \le \varphi(s), \qquad 0\le s\le t\le T. \] 
 In particular, if $t\mapsto F(\rho_t)$ is continuous, then the representative $\varphi$ can be identified with $F(\rho_t)$ for all $t$, and hence 
\[ F(\rho_t) + \frac12\int_s^t |\rho'|^2(r)\,\d r + \frac12\int_s^t g^2(\rho_r)\,\d r \le F(\rho_s), \qquad 0\le s\le t\le T. \]

%In the applications below, the relevant upper gradient will be $g=|\partial F|$.

In the metric-space framework of \cite{AGS2005}, maximal slope solutions play the role of gradient flows. In a smooth Hilbert setting, if $F$ is differentiable, then the slope coincides with the norm of the classical gradient, 
\[ |\partial F|(u)=\|\nabla F(u)\|. \] 
Moreover, the map $u\mapsto\|\nabla F(u)\|$ is a strong upper gradient, by the usual chain rule and the Cauchy--Schwarz inequality. Therefore the maximal slope condition with $g=|\partial F|$ reduces to the usual energy dissipation relation associated with the differential equation 
\[ u'(t)=-\nabla F(u(t)). \] 
Similarly, whenever the  slope is a strong upper gradient, the notion of maximal slope solution may be regarded as the natural variational extension of the classical gradient flow to the Wasserstein setting, where the metric derivative $|\rho'|(t)$ and the slope $|\partial F|(\rho_t)$ replace the velocity norm and the gradient norm, respectively.

Accordingly, in the present paper we regard a maximal slope solution for $F$ as the variational formulation of the Wasserstein gradient flow generated by $F$. However, this formulation is still abstract: it characterizes the evolution through energy dissipation, but it does not by itself identify the concrete velocity field appearing in the continuity equation. For the specific functional $F=\mathcal V_n$, the main task of the subsequent analysis is to make this variational description explicit.

More precisely, in the next section we will compute the first variation of $\mathcal V_n$ and show that the associated vector field can be written explicitly in terms of the mean $m(\rho)$ and the covariance matrix $\Sigma(\rho)$. This will yield a closed finite-dimensional evolution equation for $\Sigma(\rho_t)$. Solving this finite-dimensional system, we will construct a candidate curve by pushing forward the initial datum through the corresponding transition map. We will then verify that the resulting curve is both a weak solution to the continuity equation and a maximal slope solution for $\mathcal V_n$. Finally, after establishing the relevant chain rule and slope identity for $\mathcal V_n$, we will show that any maximal slope solution must be driven by the same explicit velocity field. In this sense, for the flow generated by $\mathcal V_n$, the abstract variational formulation and the concrete continuity-equation formulation coincide.

The next proposition isolates the abstract mechanism behind this final identification. It shows that once a chain rule and a slope identity are available along a maximal slope solution, the velocity field in the continuity equation is uniquely determined.

\begin{proposition}[Abstract velocity identification] \label{prop:abstract-identification} 
Let 
\[ F:\mathcal P_2(\mathbb R^d)\to(-\infty,\infty] \] 
be a functional with $F^{-1}(\mathbb R)\neq\varnothing$. Assume that the local slope $|\partial F|$ is a strong upper gradient for $F$, and let $(\rho_t)_{t\in[0,T]}$ be a maximal slope solution for $F$ with respect to $|\partial F|$. Let $\varphi$ be the nonincreasing representative appearing in Definition~\ref{def:maximal-slope}. Let $w_t$ be the minimal velocity field associated with $(\rho_t)$, so that 
\[ \partial_t\rho_t+\nabla\cdot(\rho_t w_t)=0 \qquad\text{in }\mathcal D'((0,T)\times\mathbb R^d), \] 
and 
\[ \|w_t\|_{L^2(\rho_t)}=|\rho'|(t) \qquad\text{for a.e. }t\in(0,T). \] 
Assume moreover that there exists a vector field 
\[ \xi_{\rho_t}\in L^2(\rho_t;\mathbb R^d) \] 
such that, along the curve $(\rho_t)$, one has the chain rule 
\[ \varphi'(t) = \int_{\mathbb R^d} \langle \xi_{\rho_t}(x),w_t(x)\rangle\,\d\rho_t(x) \qquad\text{for a.e. }t\in(0,T), \]
and the slope identity 
\[ \|\xi_{\rho_t}\|_{L^2(\rho_t)}=|\partial F|(\rho_t) \qquad\text{for a.e. }t\in(0,T). \] 
Then \[ w_t=-\xi_{\rho_t} \qquad\text{for a.e. }t\in(0,T). \] \end{proposition}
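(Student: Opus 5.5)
The plan is to compare the energy dissipation inequality with the chain rule and conclude by equality in Cauchy--Schwarz. Fix a time $t\in(0,T)$ outside a null set on which all of the following hold: the maximal slope inequality
\[
\varphi'(t)\le -\tfrac12|\rho'|^2(t)-\tfrac12|\partial F|^2(\rho_t),
\]
the chain rule, the slope identity, and the minimality relation $\|w_t\|_{L^2(\rho_t)}=|\rho'|(t)$. Since $\varphi$ is nonincreasing, it is differentiable almost everywhere, so such times form a set of full measure.

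At such a $t$, combine the chain rule with the Cauchy--Schwarz inequality in $L^2(\rho_t;\mathbb R^d)$ and Young's inequality:
\[
\varphi'(t)=\int\langle\xi_{\rho_t},w_t\rangle\,\d\rho_t\ge-\|\xi_{\rho_t}\|_{L^2(\rho_t)}\|w_t\|_{L^2(\rho_t)}\ge -\tfrac12\|\xi_{\rho_t}\|^2_{L^2(\rho_t)}-\tfrac12\|w_t\|^2_{L^2(\rho_t)} .
\]
By the slope identity and minimality, the last expression equals $-\tfrac12|\partial F|^2(\rho_t)-\tfrac12|\rho'|^2(t)$. Together with the maximal slope inequality, every inequality in this chain must therefore be an equality.

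Equality in Young's inequality forces $\|\xi_{\rho_t}\|_{L^2(\rho_t)}=\|w_t\|_{L^2(\rho_t)}$. Equality in the preceding step then gives
\[
\int\langle\xi_{\rho_t},w_t\rangle\,\d\rho_t=-\|\xi_{\rho_t}\|_{L^2(\rho_t)}\|w_t\|_{L^2(\rho_t)}.
\]
Expanding the square, we get
\[
\|w_t+\xi_{\rho_t}\|^2_{L^2(\rho_t)}=\|w_t\|^2+2\langle w_t,\xi_{\rho_t}\rangle+\|\xi_{\rho_t}\|^2=0,
\]
so $w_t=-\xi_{\rho_t}$ $\rho_t$-a.e. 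This argument also covers the degenerate case in which both norms vanish.

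The only delicate point is bookkeeping. The almost-every-time statements must be intersected on a common full-measure set, and the conclusion is equality in $L^2(\rho_t)$, that is, $\rho_t$-a.e.\ in $x$, for a.e.\ $t$. Otherwise the argument is purely algebraic, and I do not expect a genuine obstacle. In particular, the strong upper gradient assumption on $|\partial F|$ is not needed in this argument itself; it only guarantees that the maximal slope notion is meaningful.
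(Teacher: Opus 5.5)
Your proof is correct and follows the same route as the paper. Both arguments sandwich $\varphi'(t)$ between the maximal-slope inequality and the chain rule combined with Cauchy--Schwarz and Young, and then read off $w_t=-\xi_{\rho_t}$ from the equality case; your explicit expansion $\|w_t+\xi_{\rho_t}\|_{L^2(\rho_t)}^2=0$ and your remark that the strong upper gradient hypothesis is not actually used are accurate refinements of what the paper states more tersely.
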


\begin{proof}
	Since $(\rho_t)$ is a maximal slope solution with respect to
	$g=|\partial F|$, $\varphi$ satisfies
	\[
	-\varphi'(t)
	\ge
	\frac12|\rho'|^2(t)+\frac12|\partial F|^2(\rho_t)
	\qquad\text{for a.e. }t\in(0,T).
	\]
	On the other hand, by the chain rule and the Cauchy--Schwarz inequality,
	\[
	-\varphi'(t)
	=
	-\int_{\mathbb R^d}
	\langle \xi_{\rho_t},w_t\rangle\,d\rho_t
	\le
	\|\xi_{\rho_t}\|_{L^2(\rho_t)}\|w_t\|_{L^2(\rho_t)}.
	\]
	Using Young's inequality and the slope identity, we obtain
	\[
	-\varphi'(t)
	\le
	\frac12\|\xi_{\rho_t}\|_{L^2(\rho_t)}^2
	+
	\frac12\|w_t\|_{L^2(\rho_t)}^2
	=
	\frac12|\partial F|^2(\rho_t)
	+
	\frac12|\rho'|^2(t).
	\]
	Thus equality holds throughout. In particular, equality holds in the
	Cauchy--Schwarz and Young inequalities, and hence
	\[
	w_t=-\xi_{\rho_t}
	\qquad\text{for a.e. }t\in(0,T).
	\]
\end{proof}

\section{Finite-dimensional reduction and construction of the flow}\label{sec:3}
\setcounter{equation}{0}

In this section, we exploit the covariance representation of $\mathcal V_n$ to reduce the Wasserstein gradient flow to a finite-dimensional dynamical system. More precisely, we first compute the first variation of $\mathcal V_n$ and identify the associated velocity field explicitly in terms of the mean and the covariance matrix. This yields a closed evolution equation for the covariance matrix, from which we construct a candidate curve by pushforward of the initial datum through a time-dependent linear transition map. We then show that this curve solves the continuity equation, satisfies the maximal slope formulation for $\mathcal V_n$, and is uniquely determined by the initial datum.

\subsection{First variation, covariance dynamics, and pushforward construction}\label{sec:3.1}

We begin by computing the first variation of $\mathcal V_n$. Owing to the moment representation established in Section~2.2, this computation can be carried out at the level of the covariance matrix. As a consequence, the Wasserstein gradient field associated with $\mathcal V_n$ turns out to be affine in the space variable, with coefficients depending only on the mean $m(\rho)$ and the covariance matrix $\Sigma(\rho)$. This structural simplification is the key point of the analysis, since it reduces the infinite-dimensional gradient flow problem to a closed finite-dimensional system for $\Sigma(\rho_t)$.

\begin{proposition}[First variation of $\mathcal V_n$]
	\label{prop:first-variation-Vn}
	Let $\rho\in\mathcal P_2(\mathbb R^d)$, and write
	\[
	m:=m(\rho),
	\qquad
	\Sigma:=\Sigma(\rho),
	\qquad
	T:=T_{n-1}(\Sigma).
	\]
	For $\xi\in C_c^1(\mathbb R^d;\mathbb R^d)$, define
	\[
	T_t(x):=x+t\xi(x),
	\qquad
	\rho_t:=(T_t)_\#\rho .
	\]
	Then the map $t\mapsto \mathcal V_n[\rho_t]$ is differentiable at $t=0$, and
	\[
	\frac{\d}{\d t}\mathcal V_n[\rho_t]\Big|_{t=0}
	=
	2\left(\frac{n+1}{n!}\right)
	\int_{\mathbb R^d}
	\big\langle T(x-m),\xi(x)\big\rangle\,\d\rho(x).
	\]
	Equivalently, the first variation of $\mathcal V_n$ is given by
	\[
	\frac{\delta\mathcal V_n}{\delta\rho}(x)
	=
	\frac{n+1}{n!}\,(x-m(\rho))^\top T_{n-1}(\Sigma(\rho))(x-m(\rho))
	\]
	up to an additive constant.
\end{proposition}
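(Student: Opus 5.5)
The plan is to reduce everything to the finite-dimensional formula of Proposition~\ref{prop:moment-representation}. By that result, $\mathcal V_n[\rho_t]=c_n\,e_n(\Sigma(\rho_t))$, where $c_n=\frac{n+1}{n!}$ and $e_n$ is a polynomial in the matrix entries (the coefficient of $s^n$ in $\det(I+sA)$). So it suffices to show two things: that $t\mapsto\Sigma(\rho_t)$ is differentiable at $t=0$, and that its derivative is given by an explicit formula. The chain rule together with Proposition~\ref{prop:differential-en} then finishes the argument.

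The first step is to compute the moments of $\rho_t=(T_t)_\#\rho$ by change of variables. Since $\xi\in C_c^1$ is bounded, $\rho_t\in\mathcal P_2$, and
\[
m(\rho_t)=m+t\int\xi\,\d\rho,\qquad \Sigma(\rho_t)=\int (x+t\xi(x)-m(\rho_t))(x+t\xi(x)-m(\rho_t))^\top\,\d\rho(x).
\]
Set $\bar\xi:=\int\xi\,\d\rho$. Then $x+t\xi-m(\rho_t)=(x-m)+t(\xi-\bar\xi)$, so $\Sigma(\rho_t)$ is an exact quadratic polynomial in $t$:
\[
\Sigma(\rho_t)=\Sigma+t\,A+t^2\!\int(\xi-\bar\xi)(\xi-\bar\xi)^\top\d\rho,\qquad A:=\int\big[(x-m)(\xi-\bar\xi)^\top+(\xi-\bar\xi)(x-m)^\top\big]\d\rho.
\]
All integrals are finite because $\rho$ has finite second moment and $\xi$ is bounded. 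Since $\int(x-m)\,\d\rho=0$, the $\bar\xi$ terms in $A$ drop out, leaving $A=\int[(x-m)\xi^\top+\xi(x-m)^\top]\,\d\rho$. Differentiability of $t\mapsto\mathcal V_n[\rho_t]$, indeed smoothness, follows because it is a polynomial in $t$.

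The second step applies Proposition~\ref{prop:differential-en} with the symmetric matrix $A$:
\[
\frac{\d}{\d t}\mathcal V_n[\rho_t]\Big|_{t=0}=c_n\operatorname{tr}(TA)=c_n\int\big[\langle T\xi,(x-m)\rangle+\langle T(x-m),\xi\rangle\big]\d\rho = 2c_n\int\langle T(x-m),\xi\rangle\,\d\rho.
\]
Here I use $\operatorname{tr}(Tuv^\top)=v^\top Tu$ and the symmetry of $T$; the latter holds because $T$ is a polynomial in the symmetric matrix $\Sigma$.

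For the "equivalently" clause, observe that with $\psi(x)=c_n(x-m)^\top T(x-m)$ we have $\nabla\psi=2c_nT(x-m)$. Hence the formula reads $\frac{\d}{\d t}\mathcal V_n[\rho_t]|_{t=0}=\int\langle\nabla\psi,\xi\rangle\,\d\rho$, which is the defining property of $\frac{\delta\mathcal V_n}{\delta\rho}$ up to an additive constant. The same $\psi$ also arises from a direct first-variation computation along $\rho+\varepsilon(\nu-\rho)$: the covariance perturbation there is $\int (x-m)(x-m)^\top\d(\nu-\rho)$, because the mean-shift contributes only at second order. There is no real obstacle. The only points needing care are the justification that all moments are finite without compact support of $\rho$, and the cancellation of the $\bar\xi$ terms via $\int(x-m)\,\d\rho=0$.
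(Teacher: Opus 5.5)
Your proposal is correct and follows the paper's proof essentially step for step. Both arguments expand $\Sigma(\rho_t)=\Sigma+tB+t^2C$ exactly via the pushforward, cancel the mean-shift terms using $\int(x-m)\,\d\rho=0$, then apply Proposition~\ref{prop:differential-en} and the symmetry of $T$ to obtain $\operatorname{tr}(TB)=2\int\langle T(x-m),\xi\rangle\,\d\rho$. Your observation that $t\mapsto\mathcal V_n[\rho_t]$ is actually a polynomial in $t$ is a small addition, and it makes the differentiability claim fully explicit.
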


\begin{proof}
	By Proposition~\ref{prop:moment-representation},
	\[
	\mathcal V_n[\rho_t]
	=
	\frac{n+1}{n!}\,e_n(\Sigma(\rho_t)).
	\]
	Thus it suffices to compute the derivative of $\Sigma(\rho_t)$ at $t=0$.\\
	
	Set
	\[
	a:=\int_{\mathbb R^d}\xi(x)\,\d\rho(x).
	\]
	Since $\rho_t=(T_t)_\#\rho$, we have
	\[
	m(\rho_t)
	=
	\int_{\mathbb R^d}T_t(x)\,\d\rho(x)
	=
	\int_{\mathbb R^d}(x+t\xi(x))\,\d\rho(x)
	=
	m+t a.
	\]
	Hence
	\[
	T_t(x)-m(\rho_t)
	=
	(x-m)+t(\xi(x)-a).
	\]
	Therefore,
	\[
	\Sigma(\rho_t)
	=
	\int_{\mathbb R^d}
	(T_t(x)-m(\rho_t))(T_t(x)-m(\rho_t))^\top\,\d\rho(x)
	\]
	can be expanded as
	\[
	\Sigma(\rho_t)
	=
	\Sigma+tB+t^2C,
	\]
	where
	\[
	B
	:=
	\int_{\mathbb R^d}
	\Big((x-m)(\xi(x)-a)^\top+(\xi(x)-a)(x-m)^\top\Big)\,\d\rho(x),
	\]
	and
	\[
	C
	:=
	\int_{\mathbb R^d}
	(\xi(x)-a)(\xi(x)-a)^\top\,\d\rho(x).
	\]
	In particular,
	\[
	\frac{\d}{\d t}\Sigma(\rho_t)\Big|_{t=0}=B.
	\]
	Since $\int_{\mathbb R^d}(x-m)\,\d\rho(x)=0$, the terms involving $a$ vanish, and thus
	\[
	B
	=
	\int_{\mathbb R^d}
	\Big((x-m)\xi(x)^\top+\xi(x)(x-m)^\top\Big)\,\d\rho(x).
	\]
	Now Proposition~\ref{prop:differential-en} yields
	\[
	\frac{\d}{\d t}e_n(\Sigma(\rho_t))\Big|_{t=0}
	=
	\d e_n(\Sigma)[B]
	=
	\operatorname{tr}(TB).
	\]
	Hence
	\[
	\frac{\d}{\d t}\mathcal V_n[\rho_t]\Big|_{t=0}
	=
	\frac{n+1}{n!}\operatorname{tr}(TB).
	\]
	Using the symmetry of $T$, we compute
	\[
	\operatorname{tr}(TB)
	=
	\int_{\mathbb R^d}
	\operatorname{tr}\Big(T(x-m)\xi(x)^\top+T\xi(x)(x-m)^\top\Big)\,\d\rho(x)
	=
	2\int_{\mathbb R^d}\langle T(x-m),\xi(x)\rangle\,\d\rho(x).
	\]
	Therefore,
	\[
	\frac{\d}{\d t}\mathcal V_n[\rho_t]\Big|_{t=0}
	=
	2\left(\frac{n+1}{n!}\right)
	\int_{\mathbb R^d}
	\big\langle T(x-m),\xi(x)\big\rangle\,\d\rho(x).
	\]
	Finally, since
	\[
	\nabla_x\!\left((x-m)^\top T(x-m)\right)=2T(x-m)
	\]
	for symmetric $T$, the above identity shows that
	\[
	\frac{\delta\mathcal V_n}{\delta\rho}(x)
	=
	\frac{n+1}{n!}\,(x-m(\rho))^\top T_{n-1}(\Sigma(\rho))(x-m(\rho))
	\]
	up to an additive constant.
\end{proof}

The previous proposition identifies the first variation of $\mathcal V_n$ explicitly. In particular, taking the spatial gradient yields a velocity field which is affine in the space variable and depends only on the first two moments of the measure. This is precisely the structural feature that allows us to reduce the Wasserstein gradient flow to a finite-dimensional dynamical system.

\begin{corollary}[Explicit gradient field]
	\label{cor:explicit-gradient-field}
	For $\rho\in\mathcal P_2(\mathbb R^d)$,
	\[
	\nabla\!\left(\frac{\delta\mathcal V_n}{\delta\rho}\right)(x)
	=
	2\left(\frac{n+1}{n!}\right)\,T_{n-1}(\Sigma(\rho))(x-m(\rho)).
	\]
	Accordingly, the associated velocity field is
	\[
	v[\rho](x)
	:=
	-\nabla\!\left(\frac{\delta\mathcal V_n}{\delta\rho}\right)(x)
	=
	-2\left(\frac{n+1}{n!}\right)\,T_{n-1}(\Sigma(\rho))(x-m(\rho)).
	\]
\end{corollary}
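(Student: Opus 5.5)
The corollary follows directly from Proposition~\ref{prop:first-variation-Vn}, which gives
\[
\frac{\delta\mathcal V_n}{\delta\rho}(x)=\frac{n+1}{n!}\,(x-m(\rho))^\top T_{n-1}(\Sigma(\rho))(x-m(\rho))
\]
up to an additive constant. The plan is to differentiate this quadratic form in $x$ and use the symmetry of $T_{n-1}(\Sigma(\rho))$. Additive constants disappear under $\nabla$, so the gradient is well defined even though the first variation is not.

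Write $m=m(\rho)$ and $T=T_{n-1}(\Sigma(\rho))$. Since $\Sigma(\rho)$ is symmetric and $T$ is a polynomial in $\Sigma(\rho)$ by Proposition~\ref{prop:differential-en}, $T$ is symmetric as well. For any square matrix $A$ one has $\nabla_x\bigl((x-m)^\top A(x-m)\bigr)=(A+A^\top)(x-m)$, and with $A=T$ this equals $2T(x-m)$. Multiplying by $\frac{n+1}{n!}$ gives the stated formula for $\nabla\frac{\delta\mathcal V_n}{\delta\rho}$. The formula for $v[\rho]$ is then simply its negative, which is a definition.

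As a consistency check, the result agrees with the directional derivative identity in Proposition~\ref{prop:first-variation-Vn}:
\[
\frac{\d}{\d t}\mathcal V_n[\rho_t]\Big|_{t=0}=\int\Bigl\langle\nabla\frac{\delta\mathcal V_n}{\delta\rho},\xi\Bigr\rangle\,\d\rho .
\]
This pins down the gradient field uniquely as an element of $L^2(\rho)$ paired against test fields $\xi\in C^1_c$. The field also lies in $L^2(\rho)$, because it is affine and $\rho\in\mathcal P_2$. No step here is a real obstacle; the only point to state explicitly is the symmetry of $T$, which is what makes the factor exactly $2$.
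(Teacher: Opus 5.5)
Your proposal is correct and follows the same route as the paper: differentiate the quadratic first variation from Proposition~\ref{prop:first-variation-Vn}, using the symmetry of $T_{n-1}(\Sigma(\rho))$ to get the factor $2$. The consistency check against the directional-derivative identity is not needed for the proof, but it does no harm.
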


\begin{proof}
	This follows immediately from Proposition~\ref{prop:first-variation-Vn} and the symmetry of
	$T_{n-1}(\Sigma(\rho))$.
\end{proof}

The explicit form of the velocity field allows us to derive closed evolution equations for the first two moments of the flow. More precisely, along the continuity equation driven by this field, the mean is conserved, while the covariance matrix satisfies an autonomous matrix ODE involving the Newton transformation $T_{n-1}$. Passing to the eigenvalue variables, this matrix equation reduces to a closed system of scalar ODEs. Solving the reduced system, we then construct a time-dependent transition map and define a candidate solution curve by pushing forward the initial datum.\\

From now on, let $(\rho_t)_{t\in[0,T]}\subset \mathcal P_2(\mathbb R^d)$ be a weak solution to \[ \partial_t\rho_t+\nabla\cdot(\rho_t v[\rho_t])=0 \] with the velocity field $v[\rho]$ given by Corollary~\ref{cor:explicit-gradient-field}. We write \[ m_t:=m(\rho_t), \qquad \Sigma_t:=\Sigma(\rho_t). \] Since $v[\rho](x)$ is affine in $x$, for each fixed $t$ we have \[ |v[\rho_t](x)|\le C_t(1+|x|), \] and therefore \[ \int_{\mathbb R^d}|v[\rho_t](x)|\,\d\rho_t(x) + \int_{\mathbb R^d}|x|\,|v[\rho_t](x)|\,\d\rho_t(x) <\infty. \] This allows us to justify the moment identities below by means of cutoff approximations.

\begin{proposition}[Conservation of the mean] \label{prop:mean-conservation} Let $(\rho_t)_{t\in[0,T]}\subset \mathcal P_2(\mathbb R^d)$ be a weak solution to \[ \partial_t\rho_t+\nabla\cdot(\rho_t v[\rho_t])=0, \] where \[ v[\rho](x) = -2\left(\frac{n+1}{n!}\right)\,T_{n-1}(\Sigma(\rho))(x-m(\rho)). \] Then \[ \frac{\d}{\d t}m_t=0 \qquad\text{for a.e. }t\in(0,T). \] In particular, \[ m_t=m_0 \qquad\text{for all }t\in[0,T]. \] \end{proposition}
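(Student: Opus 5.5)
The plan is to test the weak formulation with the coordinate functions $x_i$, which is not directly allowed, so I will first use a spatial cutoff and then remove it using the affine growth of $v[\rho_t]$ together with the uniform second-moment bound that comes from the $AC^2$ (hence $W_2$-continuous) regularity of the curve. Let $\chi\in C_c^\infty(\mathbb R^d)$ satisfy $0\le\chi\le1$, $\chi=1$ on $B_1$ and $\operatorname{supp}\chi\subset B_2$, and put $\chi_R(x):=\chi(x/R)$. Take test functions $\varphi(t,x)=\eta(t)\,x_i\chi_R(x)$ with $\eta\in C_c^\infty((0,T))$. The weak formulation then says that, in $\mathcal D'(0,T)$,
\[
\frac{\d}{\d t}\int_{\mathbb R^d}x_i\chi_R\,\d\rho_t
=\int_{\mathbb R^d}v_t\cdot\nabla(x_i\chi_R)\,\d\rho_t .
\]

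Next I let $R\to\infty$. Since $t\mapsto\rho_t$ is $W_2$-continuous on $[0,T]$, we have $\sup_t M_2(\rho_t)<\infty$. Hence $m_t$ and $\Sigma_t$ are bounded, and $\|T_{n-1}(\Sigma_t)\|$ is bounded as well, because it is a polynomial in $\Sigma_t$. This gives $|v_t(x)|\le C(1+|x|)$ with $C$ independent of $t$. Moreover, $\nabla(x_i\chi_R)=e_i\chi_R+x_i\nabla\chi_R$, and $|x_i\nabla\chi_R|\le C\mathbf 1_{R\le|x|\le2R}$. So the right-hand side is dominated by $C(1+|x|)$, which is integrable uniformly in $t$, and dominated convergence gives convergence to $\int e_i\cdot v_t\,\d\rho_t$ in $L^1(0,T)$. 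The left-hand integrands converge to $m_t^i$ in the same way. Passing to the limit in the distributional identity yields
\[
\frac{\d}{\d t}m_t=\int_{\mathbb R^d}v[\rho_t](x)\,\d\rho_t(x)=-2c_n\,T_{n-1}(\Sigma_t)\int_{\mathbb R^d}(x-m_t)\,\d\rho_t(x)=0
\]
in $\mathcal D'(0,T)$, since $\int(x-m_t)\,\d\rho_t=0$ by the definition of the mean.

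It follows that $m_t$ is a.e. equal to a constant. The map $t\mapsto m_t$ is continuous, because $W_2$-continuity gives convergence of first moments (as in Lemma~\ref{lem:Vn-W2-continuity}). So $m_t$ is constant on $[0,T]$. To identify the constant with $m_0=m(\rho_0)$, I would either use continuity at $t=0$ together with $\rho|_{t=0}=\rho_0$, or repeat the argument with test functions that do not vanish at $t=0$, which picks up the term $\int x_i\chi_R\,\d\rho_0$.

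The main obstacle is justifying the cutoff limit. This needs the time-uniform bound on second moments, so that the affine velocity is dominated integrably and uniformly in $t$. I will take this bound from the $AC^2$ (or at least $W_2$-continuous) regularity of the curve, which the setting provides.
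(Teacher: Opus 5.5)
Your proposal is correct and follows essentially the same route as the paper: you test with the truncated coordinate functions $x_j\eta_R(x)$, let $R\to\infty$ by dominated convergence using the affine growth of $v[\rho_t]$, and conclude from $\int (x-m_t)\,\d\rho_t=0$. Your use of the time-uniform bound $\sup_{t}M_2(\rho_t)<\infty$, which follows from the $W_2$-continuity of the $AC^2$ curve, to get convergence in $L^1(0,T)$ is more careful than the paper, which applies dominated convergence only pointwise in $t$ before passing to the limit in $\mathcal D'(0,T)$.
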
 \begin{proof} Fix $j\in\{1,\dots,d\}$. Let $\eta\in C_c^\infty(\mathbb R^d)$ satisfy \[ 0\le \eta\le 1, \qquad \eta(x)=1 \ \text{for } |x|\le 1, \qquad \eta(x)=0 \ \text{for } |x|\ge 2. \] For $R>1$, define \[ \eta_R(x):=\eta\!\left(\frac{x}{R}\right), \qquad \phi_{R,j}(x):=x_j\eta_R(x). \] Then $\phi_{R,j}\in C_c^\infty(\mathbb R^d)$, and \[ \nabla\phi_{R,j}(x) = e_j\,\eta_R(x)+x_j\nabla\eta_R(x), \] where $e_j$ is the $j$-th standard basis vector. Since \[ |\nabla\eta_R(x)|\le \frac{C}{R}\mathbf 1_{\{R\le |x|\le 2R\}}, \] we obtain \[ |\nabla\phi_{R,j}(x)| \le 1 + \frac{C|x_j|}{R}\mathbf 1_{\{R\le |x|\le 2R\}} \le C \] for a constant $C$ independent of $R$. Now let $\zeta\in C_c^\infty(0,T)$. Since $\rho_t$ is a weak solution, using the test function \[ \varphi(t,x)=\zeta(t)\phi_{R,j}(x), \] we obtain \[ \int_0^T \zeta'(t)\int_{\mathbb R^d}\phi_{R,j}(x)\,\d\rho_t(x)\,\d t + \int_0^T \zeta(t)\int_{\mathbb R^d}\nabla\phi_{R,j}(x)\cdot v[\rho_t](x)\,\d\rho_t(x)\,\d t =0. \] That is, \[ \frac{\d}{\d t}\int_{\mathbb R^d}\phi_{R,j}(x)\,\d\rho_t(x) = \int_{\mathbb R^d}\nabla\phi_{R,j}(x)\cdot v[\rho_t](x)\,\d\rho_t(x) \] in the sense of distributions on $(0,T)$. We now pass to the limit $R\to\infty$. Since $\phi_{R,j}(x)\to x_j$ pointwise and \[ |\phi_{R,j}(x)|\le |x_j|\le |x|, \] the dominated convergence theorem gives \[ \int_{\mathbb R^d}\phi_{R,j}(x)\,\d\rho_t(x)\to \int_{\mathbb R^d}x_j\,\d\rho_t(x) \] for every $t\in[0,T]$, because $\rho_t\in \mathcal P_2(\mathbb R^d)\subset \mathcal P_1(\mathbb R^d)$. Next, since \[ \nabla\phi_{R,j}(x) = e_j\,\eta_R(x)+x_j\nabla\eta_R(x), \] we have \[ \nabla\phi_{R,j}(x)\to e_j \qquad\text{for every }x\in\mathbb R^d, \] and moreover \[ |\nabla\phi_{R,j}(x)\cdot v[\rho_t](x)| \le C|v[\rho_t](x)|. \] Because $v[\rho_t]\in L^1(\rho_t)$, another application of dominated convergence yields \[ \int_{\mathbb R^d}\nabla\phi_{R,j}(x)\cdot v[\rho_t](x)\,\d\rho_t(x) \to \int_{\mathbb R^d} e_j\cdot v[\rho_t](x)\,\d\rho_t(x). \] Therefore, \[ \frac{\d}{\d t}\int_{\mathbb R^d}x_j\,\d\rho_t(x) = \int_{\mathbb R^d} e_j\cdot v[\rho_t](x)\,\d\rho_t(x) \qquad\text{in }\mathcal D'(0,T). \] Since this holds for each $j=1,\dots,d$, we obtain \[ \frac{\d}{\d t}m_t = \int_{\mathbb R^d}v[\rho_t](x)\,\d\rho_t(x) \qquad\text{for a.e. }t\in(0,T). \] Finally, using the explicit form of the velocity field, \[ \int_{\mathbb R^d}v[\rho_t](x)\,\d\rho_t(x) = -2\left(\frac{n+1}{n!}\right)\,T_{n-1}(\Sigma_t) \int_{\mathbb R^d}(x-m_t)\,\d\rho_t(x) = 0. \] Hence \[ \frac{\d}{\d t}m_t=0 \qquad\text{for a.e. }t\in(0,T). \] Since $t\mapsto m_t$ is absolutely continuous, it follows that \[ m_t=m_0 \qquad\text{for all }t\in[0,T]. \] \end{proof}

The next result shows that the second-order moment also evolves according to a closed equation. Since the velocity field is affine in the space variable, the resulting equation closes at the level of the covariance matrix.

\begin{proposition}[Covariance dynamics] \label{prop:covariance-dynamics} Let $(\rho_t)_{t\in[0,T]}\subset \mathcal P_2(\mathbb R^d)$ be a weak solution to \[ \partial_t\rho_t+\nabla\cdot(\rho_t v[\rho_t])=0, \] where \[ v[\rho](x) = -2\left(\frac{n+1}{n!}\right)\,T_{n-1}(\Sigma(\rho))(x-m(\rho)). \] Then the covariance matrix satisfies \[ \frac{\d}{\d t}\Sigma_t = -4\left(\frac{n+1}{n!}\right)\,\Sigma_t\,T_{n-1}(\Sigma_t) \qquad\text{for a.e. }t\in(0,T). \] \end{proposition}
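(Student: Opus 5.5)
We follow the same cutoff strategy as in the proof of Proposition~\ref{prop:mean-conservation}, now with quadratic test functions. Fix $i,j\in\{1,\dots,d\}$ and set $\psi_{R,ij}(x):=x_ix_j\eta_R(x)$ with $\eta_R$ as before. Then
\[
\nabla\psi_{R,ij}(x)=(x_je_i+x_ie_j)\eta_R(x)+x_ix_j\nabla\eta_R(x),
\qquad
|\nabla\psi_{R,ij}(x)|\le C(1+|x|),
\]
uniformly in $R$, since $|x_ix_j||\nabla\eta_R|\le C|x|^2/R\le 2C|x|$ on $\{R\le|x|\le 2R\}$. We test the weak formulation with $\zeta(t)\psi_{R,ij}(x)$ for $\zeta\in C_c^\infty(0,T)$. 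This gives
\[
\frac{\d}{\d t}\int\psi_{R,ij}\,\d\rho_t=\int\nabla\psi_{R,ij}\cdot v[\rho_t]\,\d\rho_t
\qquad\text{in }\mathcal D'(0,T).
\]

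Next we let $R\to\infty$ by dominated convergence. On the left we use $|\psi_{R,ij}|\le|x|^2$ and $\rho_t\in\mathcal P_2$. On the right we use the bound $C(1+|x|)|v[\rho_t](x)|\le C_t(1+|x|)^2$, which is integrable by the remark preceding Proposition~\ref{prop:mean-conservation}. The resulting limit is
\[
\frac{\d}{\d t}\int x x^\top\,\d\rho_t=\int\big(x\,v_t(x)^\top+v_t(x)\,x^\top\big)\,\d\rho_t \qquad\text{in }\mathcal D'(0,T).
\]
\textbf{Main obstacle.} Distributional identities on $(0,T)$ need domination to be integrable in time, not only pointwise in $t$. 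We plan to obtain this from a uniform second-moment bound on $[0,T]$. Such a bound follows because an $AC^2$ (or narrowly continuous $\mathcal P_2$, $W_2$-continuous) curve has $M_2(\rho_t)$ and hence $\Sigma_t$ bounded on compact intervals. Then $\|T_{n-1}(\Sigma_t)\|$ is bounded, so $C_t$ is uniform in $t$, which justifies the limit and shows that $t\mapsto\int xx^\top\,\d\rho_t$ is absolutely continuous.

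It remains to evaluate the right-hand side. Write $T_t:=T_{n-1}(\Sigma_t)$ and $c_n=\frac{n+1}{n!}$, so that $v_t(x)=-2c_nT_t(x-m_t)$. We have
\[
\int x\,v_t^\top\,\d\rho_t=-2c_n\int x(x-m_t)^\top\,\d\rho_t\,T_t=-2c_n\Sigma_tT_t,
\]
since $\int m_t(x-m_t)^\top\,\d\rho_t=0$ and $T_t$ is symmetric. Likewise $\int v_tx^\top\,\d\rho_t=-2c_nT_t\Sigma_t$. Because $T_t$ is a polynomial in $\Sigma_t$, it commutes with $\Sigma_t$, so the sum equals $-4c_n\Sigma_tT_t$.

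Finally we subtract the mean contribution. By Proposition~\ref{prop:mean-conservation}, $m_t\equiv m_0$, so $\Sigma_t=\int xx^\top\,\d\rho_t-m_0m_0^\top$ and $\frac{\d}{\d t}\Sigma_t=\frac{\d}{\d t}\int xx^\top\,\d\rho_t$. This yields
\[
\frac{\d}{\d t}\Sigma_t=-4\left(\frac{n+1}{n!}\right)\Sigma_tT_{n-1}(\Sigma_t)
\]
for a.e. $t\in(0,T)$, which is the claim.
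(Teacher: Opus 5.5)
Your proposal is correct and follows essentially the same route as the paper: truncated quadratic test functions, dominated convergence as $R\to\infty$, substitution of the affine velocity field, and commutation of $T_{n-1}(\Sigma_t)$ with $\Sigma_t$. The differences are minor. You work with the uncentered moment $x_ix_j$ and subtract the constant $m_0m_0^\top$ at the end, whereas the paper centers at $m_0$ from the start. Your time-uniform domination via bounded second moments also fills in a step the paper leaves implicit, though it relies on $W_2$-continuity of the curve (as in the $AC^2$ setting) rather than mere narrow continuity.
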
 \begin{proof} By Proposition~\ref{prop:mean-conservation}, we have $m_t=m_0$ for all $t\in[0,T]$. Hence \[ \Sigma_t = \int_{\mathbb R^d}(x-m_0)(x-m_0)^\top\,\d\rho_t(x). \] Fix $i,j\in\{1,\dots,d\}$. Define \[ \Psi_{ij}(x):=(x_i-m_{0,i})(x_j-m_{0,j}). \] To justify the differentiation of its moment, we again use a cutoff. Let $\eta_R$ be as in the previous proof and set \[ \Psi_{ij}^R(x):=\Psi_{ij}(x)\eta_R(x)\in C_c^\infty(\mathbb R^d). \] Using the weak formulation with the test function \[ \varphi(t,x)=\zeta(t)\Psi_{ij}^R(x), \qquad \zeta\in C_c^\infty(0,T), \] we obtain \[ \frac{\d}{\d t}\int_{\mathbb R^d}\Psi_{ij}^R(x)\,\d\rho_t(x) = \int_{\mathbb R^d}\nabla\Psi_{ij}^R(x)\cdot v[\rho_t](x)\,\d\rho_t(x) \] in the sense of distributions. We first compute the gradient of the untruncated  function: \[ \nabla\Psi_{ij}(x) = (x_j-m_{0,j})e_i+(x_i-m_{0,i})e_j. \] Hence \[ |\nabla\Psi_{ij}(x)|\le C(1+|x|). \] Moreover, \[ \nabla\Psi_{ij}^R(x) = \eta_R(x)\nabla\Psi_{ij}(x)+\Psi_{ij}(x)\nabla\eta_R(x). \] Since \[ |\Psi_{ij}(x)|\le C(1+|x|^2), \qquad |\nabla\eta_R(x)|\le \frac{C}{R}\mathbf 1_{\{R\le |x|\le 2R\}}, \] we get \[ |\nabla\Psi_{ij}^R(x)| \le C(1+|x|). \] Therefore \[ |\nabla\Psi_{ij}^R(x)\cdot v[\rho_t](x)| \le C(1+|x|)\,|v[\rho_t](x)|. \] Since $v[\rho_t](x)$ is affine in $x$, we have \[ |v[\rho_t](x)|\le C_t(1+|x|), \] and thus \[ (1+|x|)\,|v[\rho_t](x)|\le C_t(1+|x|^2). \] Because $\rho_t\in\mathcal P_2(\mathbb R^d)$, the right-hand side is integrable with respect to $\rho_t$. Now $\Psi_{ij}^R(x)\to \Psi_{ij}(x)$ pointwise and \[ |\Psi_{ij}^R(x)|\le C(1+|x|^2), \] so dominated convergence gives \[ \int_{\mathbb R^d}\Psi_{ij}^R(x)\,\d\rho_t(x) \to \int_{\mathbb R^d}\Psi_{ij}(x)\,\d\rho_t(x). \] Likewise, \[ \nabla\Psi_{ij}^R(x)\cdot v[\rho_t](x)\to \nabla\Psi_{ij}(x)\cdot v[\rho_t](x) \] pointwise, and the above bound implies \[ \int_{\mathbb R^d}\nabla\Psi_{ij}^R(x)\cdot v[\rho_t](x)\,\d\rho_t(x) \to \int_{\mathbb R^d}\nabla\Psi_{ij}(x)\cdot v[\rho_t](x)\,\d\rho_t(x). \] Hence \[ \frac{\d}{\d t}\int_{\mathbb R^d}\Psi_{ij}(x)\,\d\rho_t(x) = \int_{\mathbb R^d}\nabla\Psi_{ij}(x)\cdot v[\rho_t](x)\,\d\rho_t(x) \] for a.e. $t\in(0,T)$. Since \[ \nabla\Psi_{ij}(x) = (x_j-m_{0,j})e_i+(x_i-m_{0,i})e_j, \] this becomes \[ \frac{\d}{\d t}(\Sigma_t)_{ij} = \int_{\mathbb R^d} \Big((x_j-m_{0,j})(v_t)_i(x)+(x_i-m_{0,i})(v_t)_j(x)\Big)\,\d\rho_t(x), \] where $v_t(x):=v[\rho_t](x)$. In matrix form, \[ \frac{\d}{\d t}\Sigma_t = \int_{\mathbb R^d} \Big((x-m_0)v_t(x)^\top+v_t(x)(x-m_0)^\top\Big)\,\d\rho_t(x). \] Substituting \[ v_t(x) = -2\left(\frac{n+1}{n!}\right)\,T_{n-1}(\Sigma_t)(x-m_0), \] we obtain \[ \frac{\d}{\d t}\Sigma_t = -2\left(\frac{n+1}{n!}\right) \int_{\mathbb R^d} \Big((x-m_0)(x-m_0)^\top T_{n-1}(\Sigma_t) + T_{n-1}(\Sigma_t)(x-m_0)(x-m_0)^\top\Big)\,\d\rho_t(x). \] Since $T_{n-1}(\Sigma_t)$ is independent of $x$, this simplifies to \[ \frac{\d}{\d t}\Sigma_t = -2\left(\frac{n+1}{n!}\right) \Big( \Sigma_t T_{n-1}(\Sigma_t) + T_{n-1}(\Sigma_t)\Sigma_t \Big). \] Finally, $T_{n-1}(\Sigma_t)$ is a polynomial in $\Sigma_t$, so it commutes with $\Sigma_t$. Therefore, \[ \frac{\d}{\d t}\Sigma_t = -4\left(\frac{n+1}{n!}\right)\,\Sigma_t T_{n-1}(\Sigma_t), \] which is the desired identity. \end{proof}

We now diagonalize the covariance equation. Since the right-hand side is a polynomial in $\Sigma_t$, the eigenspaces of the initial covariance matrix are preserved, and the matrix equation reduces to a closed system for the eigenvalues.

\begin{corollary}[Eigenvalue system] \label{cor:eigenvalue-system} Let \[ \Sigma_0=P^\top D_0P, \qquad D_0=\operatorname{diag}(\lambda_1^0,\dots,\lambda_d^0), \] with $P$ orthogonal and $\lambda_i^0\ge0$. Then the solution of \[ \frac{\d}{\d t}\Sigma_t = -4\left(\frac{n+1}{n!}\right)\,\Sigma_t\,T_{n-1}(\Sigma_t) \] has the form \[ \Sigma_t=P^\top D_tP, \qquad D_t=\operatorname{diag}(\lambda_1(t),\dots,\lambda_d(t)), \] where \[ \frac{\d}{\d t}\lambda_i(t) = -4\left(\frac{n+1}{n!}\right)\,\lambda_i(t)\,e_{n-1}(\widehat\lambda_i(t)), \qquad 1\le i\le d. \] Here $e_{n-1}(\widehat\lambda_i(t))$ denotes the $(n-1)$-th elementary symmetric polynomial of $\{\lambda_1(t),\dots,\lambda_d(t)\}\setminus\{\lambda_i(t)\}$. \end{corollary}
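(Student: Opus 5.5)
The plan is to solve the diagonal eigenvalue system directly and then verify, via uniqueness for the matrix ODE, that the resulting matrix curve is the solution. The right-hand side $F(\Sigma):=-4c_n\Sigma T_{n-1}(\Sigma)$ is a polynomial in the entries of $\Sigma$. It is therefore locally Lipschitz, and the matrix ODE has a unique local solution $\Sigma_t$ with initial datum $\Sigma_0$. Uniqueness is the key tool: it suffices to exhibit some curve of the form $P^\top D_tP$ that solves the matrix equation.

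First, I will solve the scalar system
\[
\dot\lambda_i=-4c_n\lambda_i e_{n-1}(\widehat\lambda_i),\qquad \lambda_i(0)=\lambda_i^0 .
\]
Its right-hand side is also polynomial, so a unique local solution exists. Each equation has the form $\dot\lambda_i=\lambda_i g_i(t)$ with $g_i$ continuous, so
\[
\lambda_i(t)=\lambda_i^0\exp\Big(\int_0^t g_i\Big)\ge0 .
\]
Nonnegativity then gives $e_{n-1}(\widehat\lambda_i)\ge0$, hence $\dot\lambda_i\le0$. Thus $0\le\lambda_i(t)\le\lambda_i^0$, the solution stays bounded, and it extends globally. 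This global extension is not strictly needed for the corollary on the existence interval, but it is convenient.

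Next, I set $\tilde\Sigma_t:=P^\top D_tP$ and check that it solves the matrix ODE. The functions $e_k$ are spectral, so $e_k(\tilde\Sigma_t)=e_k(D_t)$. Conjugation commutes with polynomials, so $T_{n-1}(\tilde\Sigma_t)=P^\top T_{n-1}(D_t)P$, and $F(\tilde\Sigma_t)=P^\top F(D_t)P$. For a diagonal $D$, the matrix $T_{n-1}(D)$ is diagonal with $i$-th entry $\sum_{k=0}^{n-1}(-1)^k e_{n-1-k}(\lambda)\lambda_i^k$.

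The main identity to verify is that this entry equals $e_{n-1}(\widehat\lambda_i)$. I will use the recursion $e_j(\lambda)=e_j(\widehat\lambda_i)+\lambda_i e_{j-1}(\widehat\lambda_i)$. Substituting it into the sum makes the alternating sum telescope, leaving $e_{n-1}(\widehat\lambda_i)$. Equivalently, $T_{n-1}(D)_{ii}=\partial e_n/\partial\lambda_i$, consistent with Proposition~\ref{prop:differential-en} applied to $A=E_{ii}$.

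It follows that $F(D_t)=\operatorname{diag}\bigl(-4c_n\lambda_i e_{n-1}(\widehat\lambda_i)\bigr)=\dot D_t$, so $\dot{\tilde\Sigma}_t=F(\tilde\Sigma_t)$ with $\tilde\Sigma_0=\Sigma_0$. By uniqueness, $\Sigma_t=\tilde\Sigma_t$.

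The only genuine computation is this telescoping identity for $T_{n-1}(D)_{ii}$; everything else is routine ODE uniqueness. If the corollary is applied to $\Sigma_t$ coming from Proposition~\ref{prop:covariance-dynamics}, one caveat needs handling: there the ODE holds only a.e. I will use that $t\mapsto\Sigma_t$ is absolutely continuous (as in the proof of Proposition~\ref{prop:mean-conservation}) and that $F$ is locally Lipschitz. Gronwall's inequality applied to $|\Sigma_t-\tilde\Sigma_t|$ then still gives uniqueness.
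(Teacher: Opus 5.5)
Your proposal is correct and follows essentially the paper's argument: conjugate by $P$, note that $T_{n-1}(D_t)$ is diagonal, and identify its $i$-th diagonal entry with $e_{n-1}(\widehat\lambda_i)$ via the recursion $e_m(\lambda)=e_m(\widehat\lambda_i)+\lambda_i e_{m-1}(\widehat\lambda_i)$ and telescoping. The one difference is that you justify the form $\Sigma_t=P^\top D_tP$ explicitly, by first solving the scalar system and then invoking uniqueness for the locally Lipschitz matrix ODE; the paper leaves this step implicit and simply substitutes the ansatz.
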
 

\begin{proof} 
Since $T_{n-1}$ is a polynomial in its matrix argument, we have \[ T_{n-1}(P^\top D_tP)=P^\top T_{n-1}(D_t)P. \] Therefore the matrix equation in Proposition~\ref{prop:covariance-dynamics} reduces to \[ \frac{\d}{\d t}D_t = -4\left(\frac{n+1}{n!}\right)\,D_t\,T_{n-1}(D_t). \] Since $D_t$ is diagonal, so is $T_{n-1}(D_t)$, and the $i$-th diagonal entry satisfies \[ \frac{\d}{\d t}\lambda_i(t) = -4\left(\frac{n+1}{n!}\right)\,\lambda_i(t)\,[T_{n-1}(D_t)]_{ii}. \] If $n=1$, then \[ T_0(D_t)=I, \] and therefore \[ [T_0(D_t)]_{ii}=1=e_0(\widehat\lambda_i(t)). \] Thus the desired formula follows immediately. Assume now that $n\ge2$. Since \[ T_{n-1}(D_t) = \sum_{k=0}^{n-1}(-1)^k e_{n-1-k}(D_t)\,D_t^k, \] its $i$-th diagonal entry is \[ [T_{n-1}(D_t)]_{ii} = \sum_{k=0}^{n-1}(-1)^k e_{n-1-k}(D_t)\lambda_i(t)^k. \] Using \[ e_m(D_t)=e_m(\widehat\lambda_i(t))+\lambda_i(t)e_{m-1}(\widehat\lambda_i(t)) \qquad (m\ge1), \] we obtain \[ \begin{aligned} [T_{n-1}(D_t)]_{ii} &= (-1)^{n-1}\lambda_i(t)^{n-1} +\sum_{k=0}^{n-2}(-1)^k \Big( e_{n-1-k}(\widehat\lambda_i(t)) +\lambda_i(t)e_{n-2-k}(\widehat\lambda_i(t)) \Big)\lambda_i(t)^k \\ &= \sum_{k=0}^{n-1}(-1)^k e_{n-1-k}(\widehat\lambda_i(t))\lambda_i(t)^k - \sum_{k=0}^{n-2}(-1)^{k+1} e_{n-2-k}(\widehat\lambda_i(t))\lambda_i(t)^{k+1}. \end{aligned} \] The two sums telescope, leaving only the $k=0$ term: \[ [T_{n-1}(D_t)]_{ii}=e_{n-1}(\widehat\lambda_i(t)). \] Substituting this into the diagonal equation yields \[ \frac{\d}{\d t}\lambda_i(t) = -4\left(\frac{n+1}{n!}\right)\,\lambda_i(t)\,e_{n-1}(\widehat\lambda_i(t)). \] This proves the claim. \end{proof}

The preceding corollary reduces the covariance dynamics to a finite-dimensional autonomous ODE system. The next proposition shows that this reduced system is globally well-posed on the nonnegative cone.

\begin{proposition}[Global solvability of the reduced system] \label{prop:global-reduced-system} For every initial datum \[ (\lambda_1^0,\dots,\lambda_d^0)\in [0,\infty)^d, \] the system \begin{equation}\label{2.2}
\frac{\d}{\d t}\lambda_i(t) = -4\left(\frac{n+1}{n!}\right)\,\lambda_i(t)\,e_{n-1}(\widehat\lambda_i(t)), \qquad 1\le i\le d,
\end{equation} admits a unique global solution \[ (\lambda_1(t),\dots,\lambda_d(t))\in [0,\infty)^d \qquad\text{for all }t\ge0. \] Moreover, each $\lambda_i$ is nonincreasing. \end{proposition}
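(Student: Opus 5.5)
The right-hand side of \eqref{2.2} is a polynomial vector field $F:\mathbb R^d\to\mathbb R^d$, with
\[
F_i(\lambda) = -4c_n\,\lambda_i\,e_{n-1}(\widehat\lambda_i),
\]
so it is locally Lipschitz. The Picard--Lindel\"of theorem therefore gives a unique maximal solution on some interval $[0,T^*)$, and the task is to show that $T^*=\infty$ and that the solution stays in $[0,\infty)^d$. I will first prove invariance of the cone and then use a priori bounds to exclude blow-up.

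\textbf{Step 1 (invariance of the cone).} Each equation is linear in $\lambda_i$ with a continuous coefficient along the solution. I will write it as $\lambda_i'(t)=-a_i(t)\lambda_i(t)$ with $a_i(t):=4c_n e_{n-1}(\widehat\lambda_i(t))$. On $[0,T^*)$ this integrates to
\[
\lambda_i(t)=\lambda_i^0\exp\Bigl(-\int_0^t a_i(s)\,\d s\Bigr).
\]
This holds regardless of the sign of $a_i$, so $\lambda_i(t)\ge0$ for all $t$. It also shows that $\lambda_i(t)>0$ if $\lambda_i^0>0$, and $\lambda_i\equiv0$ if $\lambda_i^0=0$. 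Because the formula is valid throughout $[0,T^*)$, no continuity argument is needed.

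\textbf{Step 2 (monotonicity and boundedness).} Since all $\lambda_j(t)\ge0$, each coefficient $e_{n-1}(\widehat\lambda_i(t))$ is nonnegative; for $n=1$ it equals $1$. Hence $a_i\ge0$ and $\lambda_i'\le0$, so every $\lambda_i$ is nonincreasing. Combined with Step 1 this gives
\[
0\le\lambda_i(t)\le\lambda_i^0 \qquad\text{on }[0,T^*).
\]

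\textbf{Step 3 (globality and uniqueness).} A maximal solution with $T^*<\infty$ must leave every compact set. The solution stays in the compact box $\prod_i[0,\lambda_i^0]$, so $T^*=\infty$. Uniqueness is already part of the local theory, since $F$ is locally Lipschitz on all of $\mathbb R^d$.

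I do not expect a genuine obstacle. The only point needing care is Step 1: the sign of $e_{n-1}(\widehat\lambda_i)$ is not known a priori, so nonnegativity has to come from the explicit integrating-factor formula rather than from monotonicity. Nonnegativity is established first, and monotonicity is derived from it.
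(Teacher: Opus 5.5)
Your proof is correct and follows the same outline as the paper's: local existence from the polynomial (locally Lipschitz) vector field, invariance of the nonnegative cone, monotonicity giving the box $\prod_i[0,\lambda_i^0]$, and hence no blow-up. The one difference is that you obtain nonnegativity from the integrating-factor formula $\lambda_i(t)=\lambda_i^0\exp\bigl(-\int_0^t a_i(s)\,ds\bigr)$, which replaces the paper's informal ``cannot cross zero from above'' step (implicitly relying on uniqueness and the invariance of the hyperplanes $\{\lambda_i=0\}$) with a cleaner argument.
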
 

\begin{proof} 
The right-hand side is a polynomial in $(\lambda_1,\dots,\lambda_d)$, hence locally Lipschitz on $\mathbb R^d$. Therefore, for every initial datum, there exists a unique maximal local solution. Let \[ \kappa_n:=4\left(\frac{n+1}{n!}\right). \] 
Since $e_{n-1}(\widehat\lambda_i(t))\ge0$ whenever $\lambda_j(t)\ge0$ for all $j$, we have \[ \frac{\d}{\d t}\lambda_i(t)=-\kappa_n\,\lambda_i(t)\,e_{n-1}(\widehat\lambda_i(t))\le0 \] as long as the solution remains in $[0,\infty)^d$. In particular, if $\lambda_i(0)\ge0$, then $\lambda_i(t)$ cannot cross zero from above, because the right-hand side vanishes at $\lambda_i=0$. Thus the nonnegative cone is positively invariant. Since each $\lambda_i$ is nonincreasing and nonnegative, we have \[ 0\le \lambda_i(t)\le \lambda_i(0) \qquad\text{for all }t \] in the interval of existence. Hence the solution remains in the compact box \[ [0,\lambda_1(0)]\times\cdots\times[0,\lambda_d(0)]. \] Because the vector field is polynomial, no finite-time blow-up is possible on a bounded set. Therefore the maximal local solution extends to all $t\ge0$. \end{proof}

Having solved the reduced system, we can now reconstruct a candidate curve in the Wasserstein space. Since the reduced system admits a global solution, the matrix-valued coefficient
\[
A_t:=2\left(\frac{n+1}{n!}\right)\,T_{n-1}(\Sigma_t)
\]
is well-defined for all $t\ge0$. Moreover, $A_t$ is continuous in $t$, because $T_{n-1}$ is a polynomial map and $t\mapsto \Sigma_t$ is continuous. Therefore, by the standard theory of linear non-autonomous ODEs, the problem
\[
\dot Q_t=-A_tQ_t,\qquad Q_0=I,
\]
admits a unique global solution. Indeed, on every finite interval $[0,T]$, Gr\"onwall's inequality yields
\[
\|Q_t\|
\le
\exp\!\left(\int_0^t \|A_s\|\,ds\right),
\qquad 0\le t\le T,
\]
and hence no finite-time blow-up can occur.

\begin{definition}[Transition map and pushforward curve] \label{def:transition-pushforward} Let $(\lambda_1(t),\dots,\lambda_d(t))$ be the global solution of \eqref{2.2}, and define \[ D_t:=\operatorname{diag}(\lambda_1(t),\dots,\lambda_d(t)), \qquad \Sigma_t:=P^\top D_tP. \] Set \[ A_t:=2\left(\frac{n+1}{n!}\right)\,T_{n-1}(\Sigma_t). \] Let $Q_t\in\mathbb R^{d\times d}$ be the unique solution of \[ \dot Q_t=-A_tQ_t, \qquad Q_0=I. \] Given $\rho_0\in\mathcal P_2(\mathbb R^d)$, define \[ X_t(x):=m_0+Q_t(x-m_0), \qquad \rho_t:=(X_t)_\#\rho_0. \] We call $(X_t)_{t\ge0}$ the transition map associated with the reduced system, and $(\rho_t)_{t\ge0}$ the pushforward curve generated by $\rho_0$. \end{definition}

The construction above is consistent with the prescribed mean and covariance dynamics. 

\begin{proposition}[Recovery of the prescribed moments] \label{prop:pushforward-recovers-moments} Let $(\rho_t)_{t\ge0}$ be the pushforward curve defined in Definition~\ref{def:transition-pushforward}. Then \[ m(\rho_t)=m_0 \qquad\text{and}\qquad \Sigma(\rho_t)=\Sigma_t \qquad\text{for all }t\ge0. \] \end{proposition}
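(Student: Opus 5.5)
The mean is immediate from the change of variables formula for pushforwards:
\[
m(\rho_t)=\int_{\mathbb R^d}\bigl(m_0+Q_t(x-m_0)\bigr)\,\d\rho_0(x)=m_0+Q_t\int_{\mathbb R^d}(x-m_0)\,\d\rho_0(x)=m_0 .
\]
Using this, the covariance of the pushforward is
\[
\Sigma(\rho_t)=\int_{\mathbb R^d} Q_t(x-m_0)(x-m_0)^\top Q_t^\top\,\d\rho_0(x)=Q_t\Sigma(\rho_0)Q_t^\top .
\]
This quantity is finite because $\rho_0\in\mathcal P_2(\mathbb R^d)$. So the claim reduces to the purely finite-dimensional identity $Q_t\Sigma_0Q_t^\top=\Sigma_t$ for all $t\ge0$, where $\Sigma_0=\Sigma(\rho_0)=P^\top D_0P$.

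To prove this identity, I set $S_t:=Q_t\Sigma_0Q_t^\top$, which is $C^1$ in $t$. Differentiating with $\dot Q_t=-A_tQ_t$ and using the symmetry of $A_t$ gives
\[
\dot S_t=-A_tS_t-S_tA_t,\qquad S_0=\Sigma_0 .
\]
On the other hand, $\Sigma_t=P^\top D_tP$ is $C^1$, since the $\lambda_i$ solve \eqref{2.2}. By the computation in Corollary~\ref{cor:eigenvalue-system} read in reverse, we have $T_{n-1}(D_t)=\operatorname{diag}\bigl(e_{n-1}(\widehat\lambda_i(t))\bigr)$ and $T_{n-1}(\Sigma_t)=P^\top T_{n-1}(D_t)P$. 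Hence
\[
\dot\Sigma_t=-4c_n\Sigma_tT_{n-1}(\Sigma_t)=-A_t\Sigma_t-\Sigma_tA_t ,
\]
where the last equality holds because $A_t$ commutes with $\Sigma_t$, being a polynomial in it. Thus $S_t$ and $\Sigma_t$ solve the same linear matrix ODE $\dot Y=-A_tY-YA_t$ with the same initial datum. This equation has continuous coefficients, because $A_t$ is built from the already fixed global solution $\lambda(t)$, so the equation is linear in $Y$ rather than nonlinear.

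Uniqueness for linear ODEs with continuous coefficients, via Gr\"onwall's inequality on $\|S_t-\Sigma_t\|$, then gives $S_t=\Sigma_t$ for all $t\ge0$, which completes the argument.

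The only point requiring care is this last step. One must compare $S_t$ with $\Sigma_t$ through the linear equation with the given coefficient $A_t$, not through the nonlinear covariance ODE. This avoids needing to know a priori that $S_t$ commutes with $A_t$: that commutation is a consequence of $S_t=\Sigma_t$, not an input to it. Everything else is routine.
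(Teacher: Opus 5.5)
Your proof is correct and follows the same route as the paper. You compute $m(\rho_t)=m_0$ and $\Sigma(\rho_t)=Q_t\Sigma_0Q_t^\top$, then show that $Q_t\Sigma_0Q_t^\top$ and $\Sigma_t$ solve the same linear equation $\dot Y=-A_tY-YA_t$ with the same initial datum, and conclude by uniqueness. Your derivation of $\dot\Sigma_t=-A_t\Sigma_t-\Sigma_tA_t$ directly from the eigenvalue system, by reversing the computation in Corollary~\ref{cor:eigenvalue-system}, is actually the cleaner justification. The paper instead cites Proposition~\ref{prop:covariance-dynamics}, which is stated for weak solutions of the PDE and so does not literally apply to $\Sigma_t=P^\top D_tP$ at this stage.
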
 

\begin{proof} 
By  the definition of the pushforward, \[ m(\rho_t) = \int_{\mathbb R^d}X_t(x)\,\d\rho_0(x) = \int_{\mathbb R^d}\bigl(m_0+Q_t(x-m_0)\bigr)\,\d\rho_0(x) = m_0. \] Next, \[ \begin{aligned} \Sigma(\rho_t) &= \int_{\mathbb R^d}(X_t(x)-m_0)(X_t(x)-m_0)^\top\,\d\rho_0(x) \\ &= \int_{\mathbb R^d}Q_t(x-m_0)(x-m_0)^\top Q_t^\top\,\d\rho_0(x) \\ &= Q_t\Sigma_0Q_t^\top. \end{aligned} \] Thus it remains to show that \[ Q_t\Sigma_0Q_t^\top=\Sigma_t. \] Set \[ M_t:=Q_t\Sigma_0Q_t^\top. \] Differentiating and using $\dot Q_t=-A_tQ_t$, we obtain \[ \dot M_t = -A_tM_t-M_tA_t^\top. \] Since $A_t$ is symmetric, \[ \dot M_t=-A_tM_t-M_tA_t. \] On the other hand, by the definition of $A_t$, \[ A_t=2\left(\frac{n+1}{n!}\right)T_{n-1}(\Sigma_t), \] so Proposition~\ref{prop:covariance-dynamics} implies \[ \dot\Sigma_t=-A_t\Sigma_t-\Sigma_tA_t. \] Moreover, \[ M_0=\Sigma_0. \] Therefore $M_t$ and $\Sigma_t$ solve the same linear matrix ODE with the same initial datum, and uniqueness yields \[ M_t=\Sigma_t \qquad\text{for all }t\ge0. \] Hence \[ \Sigma(\rho_t)=\Sigma_t. \] \end{proof}

Subsection~\ref{sec:3.1} has produced a candidate curve whose first two moments follow exactly the reduced dynamics. In the next subsection, we show that this curve is indeed the Wasserstein gradient flow generated by $\mathcal V_n$: more precisely, it solves the continuity equation with the explicit velocity field, satisfies the maximal slope formulation, and is uniquely determined by the initial datum.

\subsection{Weak formulation, maximal slope property, and uniqueness} In Subsection~3.1, we constructed a candidate curve by solving the reduced finite-dimensional system and pushing forward the initial datum through the associated transition map. We now show that this curve indeed realizes the Wasserstein gradient flow of $\mathcal V_n$. More precisely, we first verify that the pushforward curve is a weak solution to the continuity equation with the explicit velocity field obtained earlier. We then establish the corresponding chain rule and slope identity, which imply that the same curve is a maximal slope solution for $\mathcal V_n$. Finally, combining these identities with Proposition~\ref{prop:abstract-identification}, we prove that every maximal slope solution must be driven by the same explicit velocity field, and hence coincides with the pushforward curve. For convenience, we set \[ c_n:=\frac{n+1}{n!}, \qquad \xi_\rho(x):=2c_n\,T_{n-1}(\Sigma(\rho))(x-m(\rho)). \] Then Corollary~\ref{cor:explicit-gradient-field} reads \[ v[\rho](x)=-\xi_\rho(x). \] 

\begin{proposition}[Weak solution property of the pushforward curve] \label{prop:pushforward-weak-solution} Let $(\rho_t)_{t\ge0}$ be the pushforward curve defined in Definition~\ref{def:transition-pushforward}. Then $(\rho_t)_{t\ge0}$ is a weak solution to 

\[ \partial_t\rho_t+\nabla\cdot(\rho_t v[\rho_t])=0, \qquad \rho|_{t=0}=\rho_0. \] \end{proposition}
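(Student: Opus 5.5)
The plan is to verify the weak formulation directly by the change of variables $\rho_t=(X_t)_\#\rho_0$. The first task is to identify the Eulerian velocity of the flow $X_t$. Since $X_t(x)=m_0+Q_t(x-m_0)$ and $\dot Q_t=-A_tQ_t$, we have
\[
\frac{\d}{\d t}X_t(x)=-A_tQ_t(x-m_0)=-A_t\bigl(X_t(x)-m_0\bigr).
\]
By Proposition~\ref{prop:pushforward-recovers-moments}, $m(\rho_t)=m_0$ and $\Sigma(\rho_t)=\Sigma_t$. Hence $A_t=2c_nT_{n-1}(\Sigma(\rho_t))$, and so
\[
\frac{\d}{\d t}X_t(x)=v[\rho_t](X_t(x)).
\]
Thus the trajectories of $X_t$ are characteristics of the field $v[\rho_t]$ evaluated along the curve itself. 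This is the only place where the nonlinearity enters, and it is already resolved by the moment recovery.

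Next we check the integrability requirements. $Q_t$ is continuous, hence bounded on $[0,T]$, so
\[
M_2(\rho_t)\le 2|m_0|^2+2\|Q_t\|^2\,\operatorname{tr}\Sigma_0,
\]
which is bounded uniformly on $[0,T]$; in particular $\rho_t\in\mathcal P_2$. Moreover $|v[\rho_t](x)|\le \|A_t\|(|x|+|m_0|)$ with $\|A_t\|$ bounded on $[0,T]$, since $\Sigma_t$ is continuous. Therefore $\int_0^T\!\int|v_t|^2\,\d\rho_t\,\d t<\infty$, which is stronger than the local $L^1$ condition in Definition~\ref{def:weak-solution}. For narrow continuity, take $\varphi\in C_b$. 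Then $\int\varphi\,\d\rho_t=\int\varphi(X_t(x))\,\d\rho_0(x)$, which is continuous in $t$ by dominated convergence, because $t\mapsto X_t(x)$ is continuous for each $x$.

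For the weak formulation, take $\varphi\in C_c^\infty([0,T)\times\mathbb R^d)$. Then
\[
\frac{\d}{\d t}\varphi(t,X_t(x))=\partial_t\varphi(t,X_t(x))+\nabla\varphi(t,X_t(x))\cdot v[\rho_t](X_t(x)).
\]
The right-hand side is bounded by $C(1+|x|)$ uniformly in $t\in[0,T]$, since $\nabla\varphi$ is bounded and $|X_t(x)|\le C(1+|x|)$. This bound is $\rho_0$-integrable. Integrate in $t$ over $[0,T]$, using $\varphi(T,\cdot)=0$ and $X_0=\mathrm{Id}$, to get
\[
-\varphi(0,x)=\int_0^T\bigl(\partial_t\varphi+v_t\cdot\nabla\varphi\bigr)(t,X_t(x))\,\d t.
\]
Integrate this against $\rho_0$ and apply Fubini, which is justified by the integrable bound. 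Changing variables back via $\rho_t=(X_t)_\#\rho_0$ gives exactly the identity in Definition~\ref{def:weak-solution} (and in Definition~\ref{def:sol}). Since $T$ is arbitrary, this holds on every finite interval.

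The only delicate point is the integrability needed for Fubini without compact support of $\rho_0$. It is handled by the linear growth of $v_t$ and $X_t$ together with the finite first moment of $\rho_0$.

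As an optional addition, $AC^2$ regularity follows. Proposition~\ref{prop:continuity-equation}(2) applies, since the curve is narrowly continuous and $\int_0^T\|v_t\|_{L^2(\rho_t)}^2\,\d t<\infty$, giving $(\rho_t)\in AC^2(0,T;\mathcal P_2)$.
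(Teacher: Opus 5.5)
Your proposal is correct and follows the same route as the paper: you identify $\dot X_t(x)=v[\rho_t](X_t(x))$ through the moment recovery of Proposition~\ref{prop:pushforward-recovers-moments}, then verify the weak formulation by differentiating $\varphi(t,X_t(x))$ along characteristics and changing variables via $\rho_t=(X_t)_\#\rho_0$. Your extra checks (the uniform second-moment bound, narrow continuity, and the $\rho_0$-integrable bound that justifies Fubini where the paper simply differentiates under the integral sign) fill in details the paper leaves implicit, but they do not change the argument.
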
 

\begin{proof} 

By Definition~\ref{def:transition-pushforward}, 

\[ X_t(x)=m_0+Q_t(x-m_0), \qquad \rho_t=(X_t)_\#\rho_0, \] 
where $Q_t$ solves 

\[ \dot Q_t=-A_tQ_t, \qquad A_t=2c_n\,T_{n-1}(\Sigma_t). \] 

Since $Q_t$ is $C^1$ in $t$, the map $t\mapsto X_t(x)$ is $C^1$ for every $x$, and 

\[ \dot X_t(x)=\dot Q_t(x-m_0)=-A_tQ_t(x-m_0)=-A_t(X_t(x)-m_0). \] 

By Proposition~\ref{prop:pushforward-recovers-moments}, 

\[ m(\rho_t)=m_0, \qquad \Sigma(\rho_t)=\Sigma_t. \] 

Hence 

\[ \dot X_t(x) = -2c_n\,T_{n-1}(\Sigma(\rho_t))(X_t(x)-m(\rho_t)) = v[\rho_t](X_t(x)). \]

Let $\varphi\in C_c^\infty([0,T)\times\mathbb R^d)$. Since $\rho_t=(X_t)_\#\rho_0$, we have

\[ \int_{\mathbb R^d}\varphi(t,y)\,d\rho_t(y) = \int_{\mathbb R^d}\varphi(t,X_t(x))\,\d\rho_0(x). \] 

Differentiating with respect to $t$, we obtain 

\[ \frac{\d}{\d t}\int_{\mathbb R^d}\varphi(t,X_t(x))\,\d\rho_0(x) = \int_{\mathbb R^d} \Bigl(\partial_t\varphi(t,X_t(x)) +\nabla\varphi(t,X_t(x))\cdot \dot X_t(x)\Bigr)\,\d\rho_0(x). \]

Using $\dot X_t(x)=v[\rho_t](X_t(x))$ and changing variables back to $\rho_t$, this becomes 

\[ \frac{\d}{\d t}\int_{\mathbb R^d}\varphi(t,y)\,d\rho_t(y) = \int_{\mathbb R^d} \Bigl(\partial_t\varphi(t,y)+\nabla\varphi(t,y)\cdot v[\rho_t](y)\Bigr)\,d\rho_t(y). \] 

Integrating over $[0,T]$ and using $\varphi(T,\cdot)=0$, we obtain 

\[ \int_0^T\int_{\mathbb R^d} \Bigl(\partial_t\varphi(t,y)+\nabla\varphi(t,y)\cdot v[\rho_t](y)\Bigr)\,d\rho_t(y)\,\d t + \int_{\mathbb R^d}\varphi(0,y)\,d\rho_0(y) =0. \] 

Thus $(\rho_t)$ is a weak solution in the sense of Definition~\ref{def:weak-solution}. \end{proof}

\begin{remark}
The proof of Proposition~\ref{prop:pushforward-weak-solution} shows that
$X_t$ is the characteristic flow associated with the velocity field
$v[\rho_t]$. This flow has a simple contraction property. Indeed, recall that
\[
X_t(x)=m_0+Q_t(x-m_0),
\qquad
\dot Q_t=-A_tQ_t,
\qquad
A_t=2c_nT_{n-1}(\Sigma_t).
\]
Since $A_t$ is symmetric and nonnegative, we have
\[
\frac{\d}{\d t}|X_t(x)-m_0|^2
=
-2\left\langle X_t(x)-m_0,
A_t(X_t(x)-m_0)\right\rangle
\le 0.
\]
Thus each characteristic does not move farther away from the conserved mean
$m_0$; equivalently, $|X_t(x)-m_0|$ is nonincreasing in time.

Similarly, for any $x,y\in\mathbb R^d$,
\[
\frac{\d}{\d t}|X_t(x)-X_t(y)|^2
=
-2\left\langle X_t(x)-X_t(y),
A_t(X_t(x)-X_t(y))\right\rangle
\le 0.
\]
Therefore,
\[
|X_t(x)-X_t(y)|\le |x-y|,
\qquad t\ge0.
\]
In particular, if $\rho_0$ is compactly supported, then
\[
\operatorname{diam}(\operatorname{supp}\rho_t)
\le
\operatorname{diam}(\operatorname{supp}\rho_0).
\]
This observation only means that the distance of each characteristic from
$m_0$ is nonincreasing; it does not imply that all characteristics necessarily
converge to $m_0$.
\end{remark}

The previous proposition identifies the pushforward curve as a solution to the continuity equation. In order to connect this PDE formulation with the variational framework introduced in Section \ref{sec:2.3}, we next establish a chain rule for $\mathcal V_n$ and an explicit formula for the metric slope. 

\begin{proposition}[Chain rule for $\mathcal V_n$] \label{prop:chain-rule-Vn} Let $(\mu_t)_{t\in[0,T]}\subset\mathcal P_2(\mathbb R^d)$ be a narrowly continuous weak solution to 

\[ \partial_t\mu_t+\nabla\cdot(\mu_t w_t)=0 \] 
with a Borel velocity field $w_t$ satisfying 

\[ \int_0^T \|w_t\|_{L^2(\mu_t)}^2\,\d t<\infty. \] 
Then

\[ \frac{\d}{\d t}\mathcal V_n[\mu_t] = \int_{\mathbb R^d}\langle \xi_{\mu_t}(x),w_t(x)\rangle\,d\mu_t(x) \qquad\text{for a.e. }t\in(0,T). \] 
\end{proposition}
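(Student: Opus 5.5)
The plan is to reduce the chain rule to the finite-dimensional chain rule for $e_n$, exactly as in the proof of Proposition~\ref{prop:first-variation-Vn}. By Proposition~\ref{prop:moment-representation}, $\mathcal V_n[\mu_t]=c_n e_n(\Sigma(\mu_t))$, and $A\mapsto e_n(A)$ is a polynomial in the entries of $A$. So it suffices to show two things: that $t\mapsto m(\mu_t)$ and $t\mapsto S(\mu_t):=\int xx^\top\,\d\mu_t$ are absolutely continuous on $[0,T]$, and that their a.e. derivatives are
\[
\frac{\d}{\d t}m(\mu_t)=\int w_t\,\d\mu_t,\qquad
\frac{\d}{\d t}S(\mu_t)=\int\bigl(xw_t^\top+w_tx^\top\bigr)\,\d\mu_t .
\]
Granting this, $\Sigma(\mu_t)=S(\mu_t)-m(\mu_t)m(\mu_t)^\top$ is absolutely continuous with
\[
\dot\Sigma_t=\int\bigl((x-m_t)w_t^\top+w_t(x-m_t)^\top\bigr)\,\d\mu_t,
\]
since the cross terms with $m_t$ cancel. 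Proposition~\ref{prop:differential-en} and the symmetry of $T_{n-1}(\Sigma_t)$ then give, a.e.,
\[
\frac{\d}{\d t}\mathcal V_n[\mu_t]=c_n\operatorname{tr}\bigl(T_{n-1}(\Sigma_t)\dot\Sigma_t\bigr)=2c_n\int\langle T_{n-1}(\Sigma_t)(x-m_t),w_t\rangle\,\d\mu_t=\int\langle\xi_{\mu_t},w_t\rangle\,\d\mu_t .
\]
This uses the composition of a $C^1$ map with an absolutely continuous curve.

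To justify the moment identities, I would reuse the cutoff argument of Propositions~\ref{prop:mean-conservation} and \ref{prop:covariance-dynamics}, now with general $w_t$. Test the weak formulation with $\zeta(t)\phi_{R,j}$ and $\zeta(t)x_ix_j\eta_R$. This gives, in $\mathcal D'(0,T)$,
\[
\frac{\d}{\d t}\int\phi\,\d\mu_t=\int\nabla\phi\cdot w_t\,\d\mu_t .
\]
Since the right-hand side is in $L^1(0,T)$, $t\mapsto\int\phi\,\d\mu_t$ agrees a.e. with an absolutely continuous function, and narrow continuity identifies that function for every $t$. For $\phi=\Psi^R_{ij}$ the bound $|\nabla\Psi^R_{ij}|\le C(1+|x|)$ holds uniformly in $R$. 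Then
\[
\int_0^T\!\!\int(1+|x|)|w_t|\,\d\mu_t\,\d t\le\Bigl(\int_0^T\!\!\int(1+|x|)^2\,\d\mu_t\,\d t\Bigr)^{1/2}\Bigl(\int_0^T\|w_t\|^2_{L^2(\mu_t)}\,\d t\Bigr)^{1/2},
\]
so dominated convergence in $(t,x)$ lets $R\to\infty$ in the integrated-in-time identity $\int\phi\,\d\mu_t-\int\phi\,\d\mu_s=\int_s^t\int\nabla\phi\cdot w_r\,\d\mu_r\,\d r$. The same argument covers $\phi_{R,j}$.

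The main obstacle is this last step: it requires $\sup_{t\in[0,T]}M_2(\mu_t)<\infty$, and also convergence of the left-hand side for every $t$. Narrow continuity alone does not bound the second moments uniformly. I would handle this as follows. By Proposition~\ref{prop:continuity-equation}(2), $\mu\in AC^2(0,T;\mathcal P_2)$, hence $t\mapsto\mu_t$ is $W_2$-continuous on $[0,T]$. This yields both $\sup_t M_2(\mu_t)<\infty$ and, by Lemma~\ref{lem:Vn-W2-continuity}, continuity of $t\mapsto\mathcal V_n[\mu_t]$. For fixed $t$, the pointwise convergence $\int\Psi^R_{ij}\,\d\mu_t\to\int\Psi_{ij}\,\d\mu_t$ follows from $\mu_t\in\mathcal P_2$. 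The absolutely continuous representative therefore coincides with the actual moment for all $t$, and the chain rule holds a.e.
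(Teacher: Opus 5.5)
Your proposal is correct and follows the paper's proof. Both obtain the a.e.\ moment identities for $m(\mu_t)$ and $\Sigma(\mu_t)$ by rerunning the cutoff argument of Propositions~\ref{prop:mean-conservation} and~\ref{prop:covariance-dynamics} with $w_t$ in place of $v[\rho_t]$, and then conclude with Proposition~\ref{prop:differential-en} and the symmetry of $T_{n-1}(\Sigma_t)$. Your additional care fills in details that the paper's ``arguing exactly as'' leaves implicit: you work with the uncentered moment $S(\mu_t)$ because the mean is no longer conserved, and you invoke Proposition~\ref{prop:continuity-equation}(2) to get $\sup_t M_2(\mu_t)<\infty$, which justifies the $R\to\infty$ limit by dominated convergence in $(t,x)$.
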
 

\begin{proof}

Set 

\[ m_t:=m(\mu_t), \qquad \Sigma_t:=\Sigma(\mu_t). \] 
Arguing exactly as in the proofs of Proposition~\ref{prop:mean-conservation} and Proposition~\ref{prop:covariance-dynamics}, but with the general velocity field $w_t$ in place of $v[\rho_t]$, one obtains

\[ \frac{\d}{\d t}m_t = \int_{\mathbb R^d}w_t(x)\,d\mu_t(x) \qquad\text{for a.e. }t\in(0,T), \] 
and 

\[ \frac{\d}{\d t}\Sigma_t = \int_{\mathbb R^d} \Bigl((x-m_t)w_t(x)^\top+w_t(x)(x-m_t)^\top\Bigr)\,d\mu_t(x) \qquad\text{for a.e. }t\in(0,T). \] 
By Proposition~\ref{prop:moment-representation}, 

\[ \mathcal V_n[\mu_t]=c_n\,e_n(\Sigma_t), \] 
and therefore Proposition~\ref{prop:differential-en} yields 

\[ \frac{\d}{\d t}\mathcal V_n[\mu_t] = c_n\,\operatorname{tr}\!\left(T_{n-1}(\Sigma_t)\frac{\d}{\d t}\Sigma_t\right). \] 
Using the symmetry of $T_{n-1}(\Sigma_t)$, we obtain 

\[ \begin{aligned} \frac{\d}{\d t}\mathcal V_n[\mu_t] &= c_n\int_{\mathbb R^d} \operatorname{tr}\Bigl( T_{n-1}(\Sigma_t)(x-m_t)w_t(x)^\top + T_{n-1}(\Sigma_t)w_t(x)(x-m_t)^\top \Bigr)\,d\mu_t(x) \\ &= 2c_n\int_{\mathbb R^d} \bigl\langle T_{n-1}(\Sigma_t)(x-m_t),\,w_t(x)\bigr\rangle\,d\mu_t(x) \\ &= \int_{\mathbb R^d}\langle \xi_{\mu_t}(x),w_t(x)\rangle\,d\mu_t(x). \end{aligned} \]
This proves the chain rule. \end{proof} 

\begin{proposition}[Slope identity for $\mathcal V_n$] \label{prop:slope-identity-Vn} For every $\rho\in\mathcal P_2(\mathbb R^d)$, 

\[ |\partial \mathcal V_n|(\rho) = \|\xi_\rho\|_{L^2(\rho)}. \]
Equivalently, 

\[ |\partial \mathcal V_n|(\rho) = 2c_n\, \left( \int_{\mathbb R^d} \bigl|T_{n-1}(\Sigma(\rho))(x-m(\rho))\bigr|^2\,\d\rho(x) \right)^{1/2}. \] \end{proposition}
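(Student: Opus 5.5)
We prove the two inequalities $|\partial\mathcal V_n|(\rho)\le\|\xi_\rho\|_{L^2(\rho)}$ and $|\partial\mathcal V_n|(\rho)\ge\|\xi_\rho\|_{L^2(\rho)}$ separately. If $\xi_\rho=0$ in $L^2(\rho)$, only the upper bound is needed.

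\emph{Upper bound.} Let $\nu\to\rho$ in $W_2$ and let $\gamma$ be an optimal plan between $\rho$ and $\nu$. Write $m,\Sigma$ for the moments of $\rho$ and $m',\Sigma'$ for those of $\nu$, and set $\delta:=W_2(\rho,\nu)$. Under $\gamma$ put $u=x-m$, $u'=y-m'$ and $h=u'-u$. Then $\int|h|^2\,\d\gamma\le\int|y-x|^2\,\d\gamma=\delta^2$, because subtracting the mean difference only decreases the second moment. Hence
\[
\Sigma'-\Sigma=\int\bigl(uh^\top+hu^\top\bigr)\,\d\gamma+\int hh^\top\,\d\gamma=:B+R,\qquad \|R\|\le\delta^2 .
\]
The map $\Sigma\mapsto e_n(\Sigma)$ is a polynomial, so Proposition~\ref{prop:differential-en} and Taylor's formula give
\[
e_n(\Sigma')-e_n(\Sigma)=\operatorname{tr}\bigl(T_{n-1}(\Sigma)B\bigr)+O(\|B\|^2+\delta^2),
\]
where the constant is uniform because $\Sigma'$ stays bounded. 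By Cauchy--Schwarz, $\|B\|\le 2\|u\|_{L^2}\delta=O(\delta)$. As in the proof of Proposition~\ref{prop:first-variation-Vn},
\[
c_n\operatorname{tr}\bigl(T_{n-1}(\Sigma)B\bigr)=\int\langle\xi_\rho(x),h\rangle\,\d\gamma .
\]
Cauchy--Schwarz bounds this in absolute value by $\|\xi_\rho\|_{L^2(\rho)}\,\delta$. Therefore $(\mathcal V_n[\rho]-\mathcal V_n[\nu])_+\le\|\xi_\rho\|\,\delta+O(\delta^2)$, and dividing by $\delta$ and taking the limsup gives the upper bound.

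\emph{Lower bound.} Take the explicit curve $\nu_\varepsilon:=(\mathrm{Id}-\varepsilon\xi_\rho)_\#\rho$. This map is affine, so no compact-support or $C^1_c$ restriction is needed. The expansion in the proof of Proposition~\ref{prop:first-variation-Vn} was purely algebraic and applies verbatim to the affine $\xi=-\xi_\rho$: it gives $\Sigma(\nu_\varepsilon)=\Sigma+\varepsilon B+\varepsilon^2C$ with finite $B,C$, since $\rho\in\mathcal P_2$. Hence
\[
\mathcal V_n[\nu_\varepsilon]=\mathcal V_n[\rho]-\varepsilon\|\xi_\rho\|_{L^2(\rho)}^2+O(\varepsilon^2).
\]
Using the plan $(\mathrm{Id},\mathrm{Id}-\varepsilon\xi_\rho)_\#\rho$ we get $W_2(\rho,\nu_\varepsilon)\le\varepsilon\|\xi_\rho\|_{L^2(\rho)}$. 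Moreover $W_2(\rho,\nu_\varepsilon)>0$ for small $\varepsilon$, since the energy strictly drops. Consequently
\[
\frac{\mathcal V_n[\rho]-\mathcal V_n[\nu_\varepsilon]}{W_2(\rho,\nu_\varepsilon)}\ge\frac{\varepsilon\|\xi_\rho\|^2-O(\varepsilon^2)}{\varepsilon\|\xi_\rho\|}\to\|\xi_\rho\|_{L^2(\rho)},
\]
which gives the lower bound.

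The step needing the most care is the upper bound. Its remainder must be controlled uniformly over all $\nu$ near $\rho$, not only along pushforwards of $\rho$ by maps, since the optimal coupling need not be induced by a map. Working with couplings and the centered displacement $h$ handles this, together with local boundedness of second derivatives of $e_n$ on bounded sets of matrices. The equivalent formula in the statement then follows by substituting the definition of $\xi_\rho$.
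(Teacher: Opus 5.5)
Your proof is correct and follows the paper's argument: the upper bound (optimal plan, centered displacement $h=(y-x)-(m'-m)$, second-order Taylor expansion of $e_n$ on bounded sets, Cauchy--Schwarz) is the same as in the paper. The only difference is in the lower bound. There you perturb directly along the affine field $-\xi_\rho$, whereas the paper approximates $\xi_\rho$ in $L^2(\rho)$ by $C_c^\infty$ fields $r_k$. Your shortcut is legitimate, because the covariance expansion behind Proposition~\ref{prop:first-variation-Vn} only needs $\xi\in L^2(\rho)$, and it removes the density step.
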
 

\begin{proof}
Fix $\rho\in\mathcal{P}_2(\mathbb{R}^d)$, and let us use the following notation for simplicity:

\[ m:=m(\rho), \qquad \Sigma:=\Sigma(\rho), \qquad T:=T_{n-1}(\Sigma), \qquad \xi:=\xi_\rho. \] 
We first show that 

\[ |\partial \mathcal V_n|(\rho)\le \|\xi\|_{L^2(\rho)}. \] 
Let $\nu\in\mathcal P_2(\mathbb R^d)$, and let $\pi\in\Gamma_o(\rho,\nu)$, where $\Gamma_o(\rho,\nu)$ denotes the set of all optimal plans from $\rho$ to $\nu$. Write 

\[ \widetilde m:=m(\nu), \qquad \widetilde\Sigma:=\Sigma(\nu). \] 
Since $e_n$ is a polynomial on the finite-dimensional space of symmetric matrices, its differential is locally Lipschitz. Hence, for $\nu$ sufficiently close to $\rho$ in $W_2$, there exists a constant $C_\rho>0$ such that 

\[ \bigl|e_n(\widetilde\Sigma)-e_n(\Sigma)-\operatorname{tr}(T(\widetilde\Sigma-\Sigma))\bigr| \le C_\rho |\widetilde\Sigma-\Sigma|^2. \] 
Moreover, because $W_2(\rho,\nu)\to0$ implies local boundedness of the second moments, we have 

\[ |\widetilde\Sigma-\Sigma| \le C_\rho W_2(\rho,\nu), \] 
and therefore 

\[ \mathcal V_n[\nu]-\mathcal V_n[\rho] = c_n\,\operatorname{tr}(T(\widetilde\Sigma-\Sigma)) + O\!\left(W_2(\rho,\nu)^2\right). \] 
Next, define 

\[ h(x,y):=(y-x)-(\widetilde m-m). \] 
Then

\[ \int_{\bbr^\dm\times\bbr^\dm} h(x,y)\,d\pi(x,y)=0, \] and 

\[ \widetilde\Sigma-\Sigma = \int_{\bbr^\dm\times\bbr^\dm} \Bigl((x-m)h^\top+h(x-m)^\top+hh^\top\Bigr)\,d\pi(x,y). \] 
Hence 
\[ \begin{aligned} \operatorname{tr}(T(\widetilde\Sigma-\Sigma)) &= 2\int_{\bbr^\dm\times\bbr^\dm} \langle T(x-m),h(x,y)\rangle\,d\pi(x,y) + \int_{\bbr^\dm\times\bbr^\dm} h(x,y)^\top T h(x,y)\,d\pi(x,y) \\ &= 2\int_{\bbr^\dm\times\bbr^\dm} \langle T(x-m),y-x\rangle\,d\pi(x,y) + O\!\left(W_2(\rho,\nu)^2\right), \end{aligned} \] 
because 

\[ \int_{\bbr^\dm} (x-m)\,\d\rho(x)=0 \] 
and 

\[ \int_{\bbr^\dm\times\bbr^\dm} |h(x,y)|^2\,d\pi(x,y)\le C\,W_2(\rho,\nu)^2. \] 
Combining the last two displays, we find

\[ \mathcal V_n[\rho]-\mathcal V_n[\nu] \le \int_{\mathbb R^d\times\mathbb R^d} \langle \xi(x),x-y\rangle\,d\pi(x,y) + O\!\left(W_2(\rho,\nu)^2\right). \] 
By the Cauchy--Schwarz inequality,

\[ \int_{\bbr^\dm\times\bbr^\dm} \langle \xi(x),x-y\rangle\,d\pi(x,y) \le \|\xi\|_{L^2(\rho)}\,W_2(\rho,\nu). \] 
Therefore, 

\[ \frac{(\mathcal V_n[\rho]-\mathcal V_n[\nu])_+}{W_2(\rho,\nu)} \le \|\xi\|_{L^2(\rho)}+O\!\left(W_2(\rho,\nu)\right), \] 
and taking $W_2(\rho,\nu)\to 0$ yields 

\[ |\partial \mathcal V_n|(\rho)\le \|\xi\|_{L^2(\rho)}. \] 
We now prove the reverse inequality. If $\xi=0$, there is nothing to prove. Assume $\xi\neq0$. Since $C_c^\infty(\mathbb R^d;\mathbb R^d)$ is dense in $L^2(\rho;\mathbb R^d)$, we may choose $r_k\in C_c^\infty(\mathbb R^d;\mathbb R^d)$ such that

\[ r_k\to \xi \qquad\text{in }L^2(\rho;\mathbb R^d). \] 
For $\varepsilon>0$, define 

\[ S_{\varepsilon,k}(x):=x-\varepsilon r_k(x), \qquad \rho_{\varepsilon,k}:=(S_{\varepsilon,k})_\#\rho. \] 
By Proposition~\ref{prop:first-variation-Vn}, 

\[ \lim_{\varepsilon\downarrow0} \frac{\mathcal V_n[\rho]-\mathcal V_n[\rho_{\varepsilon,k}]}{\varepsilon} = \int_{\mathbb R^d}\langle \xi(x),r_k(x)\rangle\,\d\rho(x). \] 
On the other hand, 

\[ W_2(\rho,\rho_{\varepsilon,k}) \le \left(\int_{\mathbb R^d}|x-S_{\varepsilon,k}(x)|^2\,\d\rho(x)\right)^{1/2} = \varepsilon \|r_k\|_{L^2(\rho)}. \] 
Therefore, 

\[ |\partial \mathcal V_n|(\rho) \ge \limsup_{\varepsilon\downarrow0} \frac{\mathcal V_n[\rho]-\mathcal V_n[\rho_{\varepsilon,k}]}{W_2(\rho,\rho_{\varepsilon,k})} \ge \frac{\int \langle \xi,r_k\rangle\,d\rho}{\|r_k\|_{L^2(\rho)}}. \] 
Letting $k\to\infty$, we obtain \[ |\partial \mathcal V_n|(\rho)\ge \|\xi\|_{L^2(\rho)}. \] 
Combining the two inequalities proves the claim. 

\end{proof} 

The slope identity obtained above shows that the vector field
$\xi_\rho$ is not merely a candidate first variation, but actually realizes
the metric slope of $\mathcal V_n$. In order to use this information in the
maximal-slope formulation, we next record a direct consequence of the chain
rule and the slope identity: the local slope of $\mathcal V_n$ is a strong
upper gradient. This provides the missing link between the explicit
continuity-equation formulation and the variational gradient-flow
formulation.

\begin{lemma}[Strong upper gradient property]
\label{lem:strong-upper-gradient-Vn}
The local slope $|\partial \mathcal V_n|$ is a strong upper gradient for
$\mathcal V_n$ on $\mathcal P_2(\mathbb R^d)$. More precisely, for every
curve $(\mu_t)_{t\in[0,T]}\in AC^2(0,T;\mathcal P_2(\mathbb R^d))$, the map
$t\mapsto \mathcal V_n[\mu_t]$ is absolutely continuous and satisfies
\[
\left|\frac{\d}{\d t}\mathcal V_n[\mu_t]\right|
\le
|\partial \mathcal V_n|(\mu_t)\,|\mu'|(t)
\qquad\text{for a.e. }t\in(0,T).
\]
Consequently, for all $0\le s\le t\le T$,
\[
|\mathcal V_n[\mu_t]-\mathcal V_n[\mu_s]|
\le
\int_s^t
|\partial \mathcal V_n|(\mu_r)\,|\mu'|(r)\,\d r .
\]
\end{lemma}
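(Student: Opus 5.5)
The plan is to combine the continuity-equation representation of $AC^2$ curves with the chain rule and the slope identity already established. Given $(\mu_t)\in AC^2(0,T;\mathcal P_2(\mathbb R^d))$, Proposition~\ref{prop:continuity-equation}(1), sharpened to the minimal velocity field as in Proposition~\ref{prop:first-order-approximation}, provides a Borel field $w_t$ with
\[
\partial_t\mu_t+\nabla\cdot(\mu_t w_t)=0, \qquad \|w_t\|_{L^2(\mu_t)}=|\mu'|(t)\ \text{ for a.e. } t, \qquad \int_0^T\|w_t\|_{L^2(\mu_t)}^2\,\d t<\infty .
\]
An $AC^2$ curve is $W_2$-continuous, hence narrowly continuous. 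Therefore $(\mu_t)$ satisfies the hypotheses of Proposition~\ref{prop:chain-rule-Vn}.

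\textbf{Absolute continuity.} Before applying the chain rule, I would check that $t\mapsto\mathcal V_n[\mu_t]$ is absolutely continuous and not merely a.e.\ differentiable. Since $\mathcal V_n[\mu_t]=c_n e_n(\Sigma_t)$ with $e_n$ a polynomial, it suffices that $t\mapsto m_t$ and $t\mapsto\Sigma_t$ are absolutely continuous. The cutoff argument used for Propositions~\ref{prop:mean-conservation} and \ref{prop:covariance-dynamics} gives, in $\mathcal D'(0,T)$,
\[
\frac{\d}{\d t}\Sigma_t=\int\bigl((x-m_t)w_t^\top+w_t(x-m_t)^\top\bigr)\,\d\mu_t .
\]
The right-hand side is bounded by $2\,(\operatorname{tr}\Sigma_t)^{1/2}\|w_t\|_{L^2(\mu_t)}$. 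The second moments are uniformly bounded along the $W_2$-continuous curve on $[0,T]$, and $\|w_t\|_{L^2(\mu_t)}\in L^2\subset L^1(0,T)$. So the distributional derivatives are in $L^1$, which gives absolute continuity of $m_t$, $\Sigma_t$, and hence of $\mathcal V_n[\mu_t]$.

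\textbf{Main estimate.} At a.e.\ $t$, the chain rule (Proposition~\ref{prop:chain-rule-Vn}), Cauchy--Schwarz in $L^2(\mu_t)$, and the slope identity (Proposition~\ref{prop:slope-identity-Vn}) give
\[
\Bigl|\frac{\d}{\d t}\mathcal V_n[\mu_t]\Bigr|=\Bigl|\int\langle\xi_{\mu_t},w_t\rangle\,\d\mu_t\Bigr|\le \|\xi_{\mu_t}\|_{L^2(\mu_t)}\|w_t\|_{L^2(\mu_t)}=|\partial\mathcal V_n|(\mu_t)\,|\mu'|(t).
\]
Integrating from $s$ to $t$, which is legitimate by absolute continuity, yields the integrated inequality.

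\textbf{Remaining points.} Two minor issues remain. The definition of strong upper gradient is stated for $AC$ curves, not only $AC^2$; I would treat general $AC$ curves by reparametrizing by arc length, or by noting that the paper only uses $AC^2$. I would also check measurability of $r\mapsto|\partial\mathcal V_n|(\mu_r)$, which holds because by the slope identity it is a continuous function of $(m_r,\Sigma_r)$ and second moments. The step I expect to need the most care is justifying the chain rule with only $\int\|w_t\|^2<\infty$ and general, non-affine $w_t$. In the cutoff limit one needs $\int(1+|x|)|w_t|\,\d\mu_t<\infty$, and this follows from Cauchy--Schwarz together with finite second moments, so the argument of Proposition~\ref{prop:covariance-dynamics} carries over.
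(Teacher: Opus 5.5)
Your proposal is correct and follows the same route as the paper. Both represent the $AC^2$ curve by a velocity field $w_t$ with $\|w_t\|_{L^2(\mu_t)}\le|\mu'|(t)$, apply the chain rule of Proposition~\ref{prop:chain-rule-Vn}, then Cauchy--Schwarz and the slope identity of Proposition~\ref{prop:slope-identity-Vn}, and integrate. Your direct proof of absolute continuity, via the $L^1(0,T)$ distributional derivatives of $m_t$ and $\Sigma_t$ using bounded second moments along the curve, is more careful than the paper, which infers absolute continuity from the a.e.\ derivative bound alone. Your remarks on general $AC$ curves and on measurability of $r\mapsto|\partial\mathcal V_n|(\mu_r)$ likewise address points the paper leaves implicit.
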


\begin{proof}
Let $(\mu_t)_{t\in[0,T]}\in AC^2(0,T;\mathcal P_2(\mathbb R^d))$. By the
continuity-equation representation of absolutely continuous curves in
$\mathcal P_2(\mathbb R^d)$, there exists a Borel vector field
$w_t\in L^2(\mu_t;\mathbb R^d)$ such that
\[
\partial_t\mu_t+\nabla\cdot(\mu_t w_t)=0
\quad\text{in }\mathcal D'((0,T)\times\mathbb R^d),
\]
and
\[
\|w_t\|_{L^2(\mu_t)}\le |\mu'|(t)
\qquad\text{for a.e. }t\in(0,T).
\]
By the chain rule for $\mathcal V_n$ established in
Proposition~\ref{prop:chain-rule-Vn}, we have
\[
\frac{\d}{\d t}\mathcal V_n[\mu_t]
=
\int_{\mathbb R^d}
\langle \xi_{\mu_t}(x),w_t(x)\rangle\,d\mu_t(x)
\qquad\text{for a.e. }t\in(0,T).
\]
Hence, by the Cauchy--Schwarz inequality,
\[
\left|\frac{\d}{\d t}\mathcal V_n[\mu_t]\right|
\le
\|\xi_{\mu_t}\|_{L^2(\mu_t)}
\|w_t\|_{L^2(\mu_t)}.
\]
Using the slope identity from Proposition~\ref{prop:slope-identity-Vn},
\[
\|\xi_{\mu_t}\|_{L^2(\mu_t)}
=
|\partial \mathcal V_n|(\mu_t),
\]
we obtain
\[
\left|\frac{\d}{\d t}\mathcal V_n[\mu_t]\right|
\le
|\partial \mathcal V_n|(\mu_t)\,|\mu'|(t)
\qquad\text{for a.e. }t\in(0,T).
\]
In particular, $t\mapsto \mathcal V_n[\mu_t]$ is absolutely continuous, and
integrating the preceding differential inequality over $[s,t]$ gives
\[
|\mathcal V_n[\mu_t]-\mathcal V_n[\mu_s]|
\le
\int_s^t
|\partial \mathcal V_n|(\mu_r)\,|\mu'|(r)\,\d r .
\]
This proves the strong upper gradient property.
\end{proof}

We can now verify that the explicitly constructed pushforward curve is not
only a weak solution of the continuity equation, but also the Wasserstein
gradient flow of $\mathcal V_n$ in the variational sense. Indeed, along this
curve the velocity field is precisely $-\xi_{\rho_t}$, and the slope identity
shows that the $L^2(\rho_t)$-norm of this velocity coincides with the metric
slope. Together with the strong upper gradient property, this yields the
energy dissipation identity below.

\begin{theorem}[Energy dissipation identity and maximal slope property]
\label{thm:maximal-slope-property}
Let $(\rho_t)_{t\ge0}$ be the pushforward curve defined in
Definition~\ref{def:transition-pushforward}. Then $(\rho_t)_{t\ge0}$ is a
maximal slope solution for $\mathcal V_n$ with respect to
$g=|\partial\mathcal V_n|$. More precisely, for every $0\le s\le t<\infty$,
\[
\mathcal V_n[\rho_t]
+
\frac12\int_s^t |\rho'|^2(r)\,\d r
+
\frac12\int_s^t |\partial\mathcal V_n|^2(\rho_r)\,\d r
=
\mathcal V_n[\rho_s].
\]
\end{theorem}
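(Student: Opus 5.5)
The plan is to combine the weak solution property of Proposition~\ref{prop:pushforward-weak-solution} with the chain rule of Proposition~\ref{prop:chain-rule-Vn} and the slope identity of Proposition~\ref{prop:slope-identity-Vn}. All of these are already available for the velocity field $v[\rho_t]=-\xi_{\rho_t}$.

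First, fix $T>0$ and show that $(\rho_t)_{t\in[0,T]}\in AC^2(0,T;\mathcal P_2(\mathbb R^d))$ and that $|\rho'|(t)=\|\xi_{\rho_t}\|_{L^2(\rho_t)}$ for a.e.\ $t$. Narrow continuity is immediate, since $X_t$ is continuous in $t$ and $|X_t(x)-m_0|\le|x-m_0|$, so dominated convergence applies. Next we need integrability. We have
\[
\|v[\rho_t]\|_{L^2(\rho_t)}^2=4c_n^2\operatorname{tr}\bigl(T_{n-1}(\Sigma_t)^2\Sigma_t\bigr).
\]
This is continuous in $t$ because $\Sigma_t$ is continuous and lies in a compact box by Proposition~\ref{prop:global-reduced-system}, so it is bounded on $[0,T]$. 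Proposition~\ref{prop:continuity-equation}(2) then gives $\rho\in AC^2$ with $|\rho'|(t)\le\|\xi_{\rho_t}\|_{L^2(\rho_t)}$.

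For the reverse inequality I would avoid identifying the tangent space directly. Instead, combine the chain rule, applied with $w_t=v[\rho_t]$, with the strong upper gradient bound of Lemma~\ref{lem:strong-upper-gradient-Vn}:
\[
\|\xi_{\rho_t}\|^2_{L^2(\rho_t)}=-\frac{\d}{\d t}\mathcal V_n[\rho_t]\le|\partial\mathcal V_n|(\rho_t)\,|\rho'|(t)=\|\xi_{\rho_t}\|_{L^2(\rho_t)}\,|\rho'|(t).
\]
This yields $|\rho'|(t)\ge\|\xi_{\rho_t}\|_{L^2(\rho_t)}$ wherever $\xi_{\rho_t}\neq0$, and the inequality is trivial where $\xi_{\rho_t}=0$. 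Hence $|\rho'|(t)=\|\xi_{\rho_t}\|_{L^2(\rho_t)}=|\partial\mathcal V_n|(\rho_t)$ for a.e.\ $t$.

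Then, by the chain rule together with the two identities just obtained, for a.e.\ $t$ we get
\[
\frac{\d}{\d t}\mathcal V_n[\rho_t]=-\|\xi_{\rho_t}\|^2_{L^2(\rho_t)}=-\tfrac12|\rho'|^2(t)-\tfrac12|\partial\mathcal V_n|^2(\rho_t).
\]
The map $t\mapsto\mathcal V_n[\rho_t]$ is absolutely continuous by Lemma~\ref{lem:strong-upper-gradient-Vn}; it is in fact explicitly $c_ne_n(\Sigma_t)$ with $\Sigma_t$ of class $C^1$. It is also nonincreasing, and finite by Proposition~\ref{prop:moment-representation}. So we may take $\varphi(t)=\mathcal V_n[\rho_t]$ in Definition~\ref{def:maximal-slope}, which gives the maximal slope property. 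Integrating over $[s,t]$ gives the energy dissipation identity with equality. Since $T$ was arbitrary, the identity holds for all $0\le s\le t<\infty$.

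The main subtle point is the equality $|\rho'|=\|v_t\|_{L^2(\rho_t)}$, which is needed for an identity rather than merely an inequality. The argument above handles it by the squeeze between the chain rule and the strong upper gradient property. An alternative fallback is to observe that $v[\rho_t]$ is a gradient of a quadratic function and hence lies in the tangent space, so it is the minimal velocity field.
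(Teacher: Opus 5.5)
Your proposal is correct and follows essentially the same route as the paper. The converse part of Proposition~\ref{prop:continuity-equation} together with the slope identity gives $|\rho'|(t)\le|\partial\mathcal V_n|(\rho_t)$, the chain rule along $v[\rho_t]=-\xi_{\rho_t}$ combined with the strong upper gradient property of Lemma~\ref{lem:strong-upper-gradient-Vn} gives the reverse inequality, and integrating $\frac{\d}{\d t}\mathcal V_n[\rho_t]=-|\partial\mathcal V_n|^2(\rho_t)=-|\rho'|^2(t)$ yields the identity; your explicit check of narrow continuity and your separate treatment of times where $\xi_{\rho_t}=0$ are details the paper leaves implicit.
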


\begin{proof}
Fix $T>0$. By Proposition~\ref{prop:pushforward-weak-solution}, the
pushforward curve $(\rho_t)_{t\ge0}$ is a weak solution of
\[
\partial_t\rho_t+\nabla\cdot(\rho_t v[\rho_t])=0,
\qquad
\rho|_{t=0}=\rho_0,
\]
where
\[
v[\rho_t](x)=-\xi_{\rho_t}(x).
\]
Moreover, by Proposition~\ref{prop:pushforward-recovers-moments},
\[
m(\rho_t)=m_0,
\qquad
\Sigma(\rho_t)=\Sigma_t .
\]
Hence, with
\[
c_n:=\frac{n+1}{n!},
\]
we have
\[
v[\rho_t](x)
=
-2c_nT_{n-1}(\Sigma_t)(x-m_0).
\]

We first verify that the velocity field is square-integrable along the curve
on every finite time interval. Since $t\mapsto\Sigma_t$ is continuous and
$T_{n-1}$ is a polynomial map, $t\mapsto T_{n-1}(\Sigma_t)$ is continuous.
Furthermore,
\[
\begin{aligned}
\|v[\rho_t]\|_{L^2(\rho_t)}^2
&=
4c_n^2
\int_{\mathbb R^d}
\left|T_{n-1}(\Sigma_t)(x-m_0)\right|^2\,\d\rho_t(x) \\
&=
4c_n^2\,
\operatorname{tr}
\left(
T_{n-1}(\Sigma_t)^2\Sigma_t
\right).
\end{aligned}
\]
The right-hand side is continuous in $t$ on $[0,T]$. Therefore
\[
\int_0^T
\|v[\rho_t]\|_{L^2(\rho_t)}^2\,\d t
<\infty .
\]
By the converse part of Proposition~\ref{prop:continuity-equation}, it follows that
\[
\rho\in AC^2(0,T;\mathcal P_2(\mathbb R^d))
\]
and
\[
|\rho'|(t)\le \|v[\rho_t]\|_{L^2(\rho_t)}
\qquad\text{for a.e. }t\in(0,T).
\]
Since $v[\rho_t]=-\xi_{\rho_t}$, Proposition~\ref{prop:slope-identity-Vn} gives
\[
\|v[\rho_t]\|_{L^2(\rho_t)}
=
\|\xi_{\rho_t}\|_{L^2(\rho_t)}
=
|\partial\mathcal V_n|(\rho_t).
\]
Thus
\[
|\rho'|(t)
\le
|\partial\mathcal V_n|(\rho_t)
\qquad\text{for a.e. }t\in(0,T).
\]

Next, applying the chain rule in Proposition~\ref{prop:chain-rule-Vn}, we obtain
\[
\frac{\d}{\d t}\mathcal V_n[\rho_t]
=
\int_{\mathbb R^d}
\langle \xi_{\rho_t}(x),v[\rho_t](x)\rangle\,\d\rho_t(x)
\qquad\text{for a.e. }t\in(0,T).
\]
Since $v[\rho_t]=-\xi_{\rho_t}$, this becomes
\[
\frac{\d}{\d t}\mathcal V_n[\rho_t]
=
-\|\xi_{\rho_t}\|_{L^2(\rho_t)}^2.
\]
Using again Proposition~\ref{prop:slope-identity-Vn}, we get
\begin{equation}\label{3.6}
    \frac{\d}{\d t}\mathcal V_n[\rho_t]
=
-|\partial\mathcal V_n|^2(\rho_t)
\qquad\text{for a.e. }t\in(0,T).
\end{equation}
In particular, $t\mapsto\mathcal V_n[\rho_t]$ is nonincreasing.

We now use Lemma~\ref{lem:strong-upper-gradient-Vn}. Since
$|\partial\mathcal V_n|$ is a strong upper gradient for $\mathcal V_n$, we have
\[
\left|
\frac{\d}{\d t}\mathcal V_n[\rho_t]
\right|
\le
|\partial\mathcal V_n|(\rho_t)\,|\rho'|(t)
\qquad\text{for a.e. }t\in(0,T),
\]
which implies that
\[
|\partial\mathcal V_n|^2(\rho_t)
\le
|\partial\mathcal V_n|(\rho_t)\,|\rho'|(t)
\qquad\text{for a.e. }t\in(0,T)
\]
 using \eqref{3.6}. Combining this with
\[
|\rho'|(t)\le |\partial\mathcal V_n|(\rho_t),
\]
we conclude that
\[
|\rho'|(t)=|\partial\mathcal V_n|(\rho_t)
\qquad\text{for a.e. }t\in(0,T).
\]

Consequently,
\[
\frac{\d}{\d t}\mathcal V_n[\rho_t]
=
-|\partial\mathcal V_n|^2(\rho_t)
=
-|\rho'|^2(t)
\qquad\text{for a.e. }t\in(0,T).
\]
Integrating this identity over $[s,t]\subset[0,T]$, we obtain
\[
\mathcal V_n[\rho_t]-\mathcal V_n[\rho_s]
=
-\int_s^t |\partial\mathcal V_n|^2(\rho_r)\,\d r
=
-\int_s^t |\rho'|^2(r)\,\d r.
\]
Since
\[
|\rho'|(r)=|\partial\mathcal V_n|(\rho_r)
\qquad\text{for a.e. }r,
\]
the previous identity is equivalently written as
\[
\mathcal V_n[\rho_t]
+
\frac12\int_s^t |\rho'|^2(r)\,\d r
+
\frac12\int_s^t |\partial\mathcal V_n|^2(\rho_r)\,\d r
=
\mathcal V_n[\rho_s].
\]
This proves the energy dissipation identity.

In particular, the energy dissipation inequality holds. Therefore
$(\rho_t)_{t\ge0}$ is a maximal slope solution for $\mathcal V_n$ with respect
to $g=|\partial\mathcal V_n|$.
\end{proof}

We now prove uniqueness in a stronger PDE sense. The argument follows the
standard $W_2$-stability estimate for continuity equations, but we avoid any
compact-support assumption by using the affine representation of the velocity
field. The key point is that the dependence of $v[\rho]$ on $\rho$ is entirely
through the mean $m(\rho)$ and the covariance matrix $\Sigma(\rho)$, both of
which are locally Lipschitz with respect to $W_2$ on sets with bounded second
moments.

\begin{lemma}[Moment and covariance stability]\label{lem:moment-covariance-stability}
Let $\rho,\sigma\in\mathcal P_2(\mathbb R^d)$ and suppose that
\[
M_2(\rho)+M_2(\sigma)\le R
\]
for some $R>0$. Then there exists a constant $C_R>0$, depending only on $R$
and $d$, such that
\[
|m(\rho)-m(\sigma)|\le W_2(\rho,\sigma),
\]
and
\[
\|\Sigma(\rho)-\Sigma(\sigma)\|_{\mathrm F}
\le C_R W_2(\rho,\sigma).
\]
\end{lemma}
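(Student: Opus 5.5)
We couple $\rho$ and $\sigma$ through an optimal plan $\pi\in\Gamma_o(\rho,\sigma)$, so that
\[
\int_{\mathbb R^d\times\mathbb R^d}|x-y|^2\,\d\pi(x,y)=W_2(\rho,\sigma)^2,
\]
and then express both the mean and the covariance differences as integrals against $\pi$.

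\textbf{Mean.} We write
\[
m(\rho)-m(\sigma)=\int (x-y)\,\d\pi(x,y),
\]
and Jensen's (or the Cauchy--Schwarz) inequality gives
\[
|m(\rho)-m(\sigma)|\le\Big(\int|x-y|^2\,\d\pi\Big)^{1/2}=W_2(\rho,\sigma).
\]
No constant is needed here.

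\textbf{Covariance.} We use the second-moment matrices $S(\rho):=\int xx^\top\,\d\rho$ and $S(\sigma):=\int yy^\top\,\d\sigma$, so that $\Sigma(\rho)=S(\rho)-m(\rho)m(\rho)^\top$, and split
\[
\Sigma(\rho)-\Sigma(\sigma)=\bigl(S(\rho)-S(\sigma)\bigr)-\bigl(m(\rho)m(\rho)^\top-m(\sigma)m(\sigma)^\top\bigr).
\]

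For the first term we use the algebraic identity $xx^\top-yy^\top=(x-y)x^\top+y(x-y)^\top$. Integrating against $\pi$ and using $\|ab^\top\|_{\mathrm F}=|a||b|$ together with Cauchy--Schwarz gives
\[
\|S(\rho)-S(\sigma)\|_{\mathrm F}\le W_2(\rho,\sigma)\bigl(M_2(\rho)^{1/2}+M_2(\sigma)^{1/2}\bigr)\le 2\sqrt R\,W_2(\rho,\sigma).
\]

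For the second term we use the analogous identity $aa^\top-bb^\top=(a-b)a^\top+b(a-b)^\top$ with $a=m(\rho)$ and $b=m(\sigma)$. Since $|m(\rho)|\le M_2(\rho)^{1/2}\le\sqrt R$ (and likewise for $\sigma$), the mean estimate above gives
\[
\|m(\rho)m(\rho)^\top-m(\sigma)m(\sigma)^\top\|_{\mathrm F}\le 2\sqrt R\,W_2(\rho,\sigma).
\]

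Adding the two bounds yields the claim with $C_R=4\sqrt R$, which is even independent of $d$.

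This is routine. The only point needing care is integrability: every integrand involved is bounded by a multiple of $|x|^2+|y|^2$, which is $\pi$-integrable because both marginals lie in $\mathcal P_2(\mathbb R^d)$. There is therefore no genuine obstacle; the main thing to do is organize the splitting so that each term is linear in $x-y$.
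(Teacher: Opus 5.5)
Your proof is correct and takes essentially the same route as the paper: an optimal coupling, a rank-one difference identity bounded by $|a-b|(|a|+|b|)$, and Cauchy--Schwarz. The only difference is the bookkeeping. The paper centers first, with $a=x-m(\rho)$ and $b=y-m(\sigma)$ inside the coupling, and gets $C_R=2\sqrt{2R}$. You split $\Sigma=S-mm^\top$ and bound the two pieces separately, which gives $4\sqrt R$; this improves to the same $2\sqrt{2R}$ if you use $\sqrt{M_2(\rho)}+\sqrt{M_2(\sigma)}\le\sqrt{2R}$ in both terms.
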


\begin{proof}
Let $\pi$ be an optimal transport plan from $\rho$ to $\sigma$. Then
\[
m(\rho)-m(\sigma)
=
\int_{\mathbb R^d\times\mathbb R^d}(x-y)\,d\pi(x,y),
\]
and hence, by Cauchy--Schwarz,
\[
|m(\rho)-m(\sigma)|
\le
\left(\int_{\mathbb R^d\times\mathbb R^d}|x-y|^2\,d\pi(x,y)\right)^{1/2}
=
W_2(\rho,\sigma).
\]

For the covariance matrices, set
\[
m_\rho:=m(\rho),\qquad m_\sigma:=m(\sigma),
\]
and write
\[
a:=x-m_\rho,\qquad b:=y-m_\sigma.
\]
Then
\[
\Sigma(\rho)-\Sigma(\sigma)
=
\int_{\mathbb R^d\times\mathbb R^d}
\left(aa^\top-bb^\top\right)\,d\pi(x,y).
\]
Using
\[
\|aa^\top-bb^\top\|_{\mathrm F}
\le
|a-b|(|a|+|b|),
\]
we obtain
\[
\begin{aligned}
\|\Sigma(\rho)-\Sigma(\sigma)\|_{\mathrm F}
&\le
\int_{\bbr^\dm\times\bbr^\dm} |a-b|(|a|+|b|)\,d\pi  \\
&\le
\left(\int_{\bbr^\dm\times\bbr^\dm} |a-b|^2\,d\pi\right)^{1/2}
\left(\int_{\bbr^\dm\times\bbr^\dm} (|a|+|b|)^2\,d\pi\right)^{1/2}.
\end{aligned}
\]
Since
\[
a-b=(x-y)-(m_\rho-m_\sigma),
\]
we have
\[
\int_{\bbr^\dm\times\bbr^\dm} |a-b|^2\,d\pi
\le
2\int_{\bbr^\dm\times\bbr^\dm} |x-y|^2\,d\pi
+
2|m_\rho-m_\sigma|^2
\le
4W_2^2(\rho,\sigma).
\]
Moreover,
\begin{align*}
\int_{\bbr^\dm\times\bbr^\dm} (|a|+|b|)^2\,d\pi
&\le
2\int_{\bbr^\dm\times\bbr^\dm} |a|^2\,d\pi
+
2\int_{\bbr^\dm\times\bbr^\dm} |b|^2\,d\pi\\
&=
2(M_2(\rho)-|m_\rho|^2)
+
2(M_2(\sigma)-|m_\sigma|^2)\\
&\le 2R.
\end{align*}
Combining the previous estimates gives
\[
\|\Sigma(\rho)-\Sigma(\sigma)\|_{\mathrm F}
\le
2\sqrt{2R}\,W_2(\rho,\sigma).
\]
This proves the claim.
\end{proof}

\begin{lemma}[Local $W_2$-Lipschitz estimate for the velocity field]
\label{lem:velocity-W2-lipschitz-P2}
Let $\rho,\sigma\in\mathcal P_2(\mathbb R^d)$ and suppose that
\[
M_2(\rho)+M_2(\sigma)\le R.
\]
Then there exists a constant $C_R>0$, depending only on $n,d$ and $R$, such that
\[
\|v[\rho]-v[\sigma]\|_{L^2(\sigma)}
\le
C_R W_2(\rho,\sigma).
\]
\end{lemma}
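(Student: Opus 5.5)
We split the difference of the two affine fields at a common point $x$ and then integrate against $\sigma$. Write $T_\rho:=T_{n-1}(\Sigma(\rho))$ and $T_\sigma:=T_{n-1}(\Sigma(\sigma))$. By Corollary~\ref{cor:explicit-gradient-field},
\[
v[\rho](x)-v[\sigma](x)
=
-2c_n\Bigl[(T_\rho-T_\sigma)(x-m(\sigma)) + T_\rho\bigl(m(\sigma)-m(\rho)\bigr)\Bigr],
\]
where we added and subtracted $T_\rho(x-m(\sigma))$. Taking the $L^2(\sigma)$ norm and using Minkowski's inequality gives
\[
\|v[\rho]-v[\sigma]\|_{L^2(\sigma)}
\le
2c_n\Bigl(\|T_\rho-T_\sigma\|_{\mathrm{op}}\,(\operatorname{tr}\Sigma(\sigma))^{1/2}
+\|T_\rho\|_{\mathrm{op}}\,|m(\rho)-m(\sigma)|\Bigr).
\]
It therefore suffices to control three things: $\operatorname{tr}\Sigma(\sigma)$, $\|T_\rho\|$, and $\|T_\rho-T_\sigma\|$.

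The first two are immediate. We have $\operatorname{tr}\Sigma(\sigma)\le M_2(\sigma)\le R$. For $\|T_\rho\|$, note that $T_{n-1}(A)=\sum_{k=0}^{n-1}(-1)^k e_{n-1-k}(A)A^k$ is a fixed polynomial in the entries of $A$. Each $e_j(\Sigma(\rho))$ is bounded by $C(\operatorname{tr}\Sigma(\rho))^j$, as in Corollary~\ref{cor:Vn-upper-bound}, and $\|\Sigma(\rho)\|\le \operatorname{tr}\Sigma(\rho)\le R$. Hence $\|T_\rho\|\le C(n,d,R)$.

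For the difference term, the argument rests on the observation that $A\mapsto T_{n-1}(A)$ is a polynomial map on symmetric matrices. It is therefore Lipschitz on the compact set $K_R:=\{A\succeq0:\ \operatorname{tr}A\le R\}$, which is convex. On $K_R$ the derivative is bounded by a constant depending only on $n,d,R$, and the mean value inequality along the segment between $\Sigma(\rho)$ and $\Sigma(\sigma)$ gives
\[
\|T_\rho-T_\sigma\|\le L_R\,\|\Sigma(\rho)-\Sigma(\sigma)\|_{\mathrm F}.
\]
To make $L_R$ explicit, one can expand $e_{n-1-k}(A)A^k-e_{n-1-k}(B)B^k$ telescopically, using $A^k-B^k=\sum_j A^j(A-B)B^{k-1-j}$ together with the local Lipschitz property of $e_j$.

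Finally, Lemma~\ref{lem:moment-covariance-stability} supplies the two remaining estimates,
\[
|m(\rho)-m(\sigma)|\le W_2(\rho,\sigma),
\qquad
\|\Sigma(\rho)-\Sigma(\sigma)\|_{\mathrm F}\le 2\sqrt{2R}\,W_2(\rho,\sigma).
\]
Substituting these into the bound above yields
\[
\|v[\rho]-v[\sigma]\|_{L^2(\sigma)}\le C_R\,W_2(\rho,\sigma)
\]
with $C_R$ depending only on $n,d,R$.

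I do not expect a genuine obstacle. The only point needing care is that every constant (the bound on $T_\rho$ and the Lipschitz constant of $T_{n-1}$) depends only on $R$ through $\operatorname{tr}\Sigma\le M_2$, and never on finer information such as compact support. This is guaranteed because both are polynomial bounds on the bounded convex set $K_R$.
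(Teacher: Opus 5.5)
Your proposal is correct and follows essentially the same route as the paper. Both use the add-and-subtract decomposition $-2c_n\bigl[(T_\rho-T_\sigma)(x-m(\sigma))+T_\rho(m(\sigma)-m(\rho))\bigr]$, the identity $\|x-m(\sigma)\|_{L^2(\sigma)}=(\operatorname{tr}\Sigma(\sigma))^{1/2}\le R^{1/2}$, boundedness and local Lipschitz continuity of the polynomial map $T_{n-1}$ on covariances with trace at most $R$, and finally Lemma~\ref{lem:moment-covariance-stability}; your convex compact set $K_R$ with the mean value inequality just makes the paper's ``Lipschitz on bounded sets'' explicit.
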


\begin{proof}
Recall that
\[
v[\rho](x)
=
-A_\rho(x-m_\rho),
\qquad
A_\rho:=
2c_nT_{n-1}(\Sigma(\rho)),
\qquad
c_n:=\frac{n+1}{n!}.
\]
Similarly,
\[
v[\sigma](x)
=
-A_\sigma(x-m_\sigma).
\]
For $y\in\mathbb R^d$, we decompose
\[
\begin{aligned}
v[\rho](y)-v[\sigma](y)
&=
-A_\rho(y-m_\rho)+A_\sigma(y-m_\sigma) \\
&=
-A_\rho\bigl((y-m_\sigma)+(m_\sigma-m_\rho)\bigr)
+A_\sigma(y-m_\sigma) \\
&=
-(A_\rho-A_\sigma)(y-m_\sigma)
-A_\rho(m_\sigma-m_\rho).
\end{aligned}
\]
Therefore, using the operator norm for matrices acting on vectors,
\[
\begin{aligned}
\|v[\rho]-v[\sigma]\|_{L^2(\sigma)}
&\le
\|A_\rho-A_\sigma\|_{\mathrm{op}}
\|y-m_\sigma\|_{L^2(\sigma)}
+
\|A_\rho\|_{\mathrm{op}}|m_\rho-m_\sigma|.
\end{aligned}
\]

We now estimate the two matrix terms. Since $T_{n-1}$ is a polynomial map on
the finite-dimensional space of symmetric matrices, it is Lipschitz on bounded
sets. Moreover,
\[
\|\Sigma(\rho)\|_{\mathrm F}
\le
\operatorname{tr}\Sigma(\rho)
\le
M_2(\rho),
\qquad
\|\Sigma(\sigma)\|_{\mathrm F}
\le
\operatorname{tr}\Sigma(\sigma)
\le
M_2(\sigma).
\]
Thus, under the assumption
\[
M_2(\rho)+M_2(\sigma)\le R,
\]
the matrices $\Sigma(\rho)$ and $\Sigma(\sigma)$ remain in a bounded subset of
the space of  symmetric matrices. Hence there exists a constant $C_R>0$, depending only
on $n,d$ and $R$, such that
\[
\|T_{n-1}(\Sigma(\rho))-T_{n-1}(\Sigma(\sigma))\|_{\mathrm F}
\le
C_R\|\Sigma(\rho)-\Sigma(\sigma)\|_{\mathrm F}.
\]
Since $\|B\|_{\mathrm{op}}\le \|B\|_{\mathrm F}$ for every matrix $B$, we get
\[
\begin{aligned}
\|A_\rho-A_\sigma\|_{\mathrm{op}}
&\le
\|A_\rho-A_\sigma\|_{\mathrm F} \\
&=
2c_n
\|T_{n-1}(\Sigma(\rho))-T_{n-1}(\Sigma(\sigma))\|_{\mathrm F} \\
&\le
C_R\|\Sigma(\rho)-\Sigma(\sigma)\|_{\mathrm F}.
\end{aligned}
\]
Similarly, since $\Sigma(\rho)$ is bounded in Frobenius norm,
\[
\|A_\rho\|_{\mathrm{op}}
\le
\|A_\rho\|_{\mathrm F}
=
2c_n\|T_{n-1}(\Sigma(\rho))\|_{\mathrm F}
\le
C_R.
\]

By Lemma~\ref{lem:moment-covariance-stability},
\[
\|\Sigma(\rho)-\Sigma(\sigma)\|_{\mathrm F}
\le
C_R W_2(\rho,\sigma),
\qquad
|m_\rho-m_\sigma|
\le
W_2(\rho,\sigma).
\]
Furthermore,
\[
\|y-m_\sigma\|_{L^2(\sigma)}
=
\left(
\int_{\mathbb R^d}|y-m_\sigma|^2\,d\sigma(y)
\right)^{1/2}
=
\left(\operatorname{tr}\Sigma(\sigma)\right)^{1/2}
\le
R^{1/2}.
\]
Combining the preceding estimates, we obtain
\[
\begin{aligned}
\|v[\rho]-v[\sigma]\|_{L^2(\sigma)}
&\le
C_R\|\Sigma(\rho)-\Sigma(\sigma)\|_{\mathrm F}
\left(\operatorname{tr}\Sigma(\sigma)\right)^{1/2}
+
C_R|m_\rho-m_\sigma| \\
&\le
C_R W_2(\rho,\sigma).
\end{aligned}
\]
Here the constant $C_R$ may change from line to line, but depends only on
$n,d$ and $R$. This proves the claim.
\end{proof}

\begin{lemma}[Differential estimate for the squared Wasserstein distance]
\label{lem:W2-differential-estimate}
Let $(\rho_t)_{t\in[0,T]}$ and $(\sigma_t)_{t\in[0,T]}$ be curves in
$AC^2(0,T;\mathcal P_2(\mathbb R^d))$ satisfying
\[
\partial_t\rho_t+\nabla\cdot(\rho_t v_t)=0,
\qquad
\partial_t\sigma_t+\nabla\cdot(\sigma_t u_t)=0
\]
in the sense of distributions, where
\[
v_t\in L^2(\rho_t;\mathbb R^d),
\qquad
u_t\in L^2(\sigma_t;\mathbb R^d).
\]
Assume moreover that $v_t$ and $u_t$ are the tangent velocity fields of
$\rho_t$ and $\sigma_t$, respectively. Then, for
\[
F(t):=\frac12 W_2^2(\rho_t,\sigma_t),
\]
we have, for a.e. $t\in(0,T)$,
\[
F'(t)
\le
\int_{\mathbb R^{d}\times\bbr^\dm}
\langle x-y, v_t(x)-u_t(y)\rangle\,d\pi_t(x,y)
\]
for every $\pi_t\in\Gamma_o(\rho_t,\sigma_t)$.
\end{lemma}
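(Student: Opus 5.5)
This is the standard derivative formula for the Wasserstein distance along two absolutely continuous curves, in the spirit of \cite[Theorem~8.4.7]{AGS2005}. I would prove it by comparing $F(t+h)$ with a competitor built from a fixed optimal plan $\pi_t$ at time $t$, using the first-order approximation property (Proposition~\ref{prop:first-order-approximation}).

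\textbf{Step 1: choose good times.} Take $t\in(0,T)$ such that:
\begin{itemize}
\item $F$ is differentiable at $t$, which holds a.e. because $t\mapsto W_2(\rho_t,\sigma_t)$ is absolutely continuous by the triangle inequality and both curves lie in $AC^2$;
\item the conclusion of Proposition~\ref{prop:first-order-approximation} holds at $t$ for both $(\rho_t,v_t)$ and $(\sigma_t,u_t)$.
\end{itemize}
Both are full-measure conditions.

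\textbf{Step 2: build a competitor plan.} Fix $\pi_t\in\Gamma_o(\rho_t,\sigma_t)$. For $h>0$, push $\pi_t$ forward by $(x,y)\mapsto(x+hv_t(x),\,y+hu_t(y))$. The result is a coupling of $(\mathrm{Id}+hv_t)_\#\rho_t$ and $(\mathrm{Id}+hu_t)_\#\sigma_t$. Hence
\[
W_2\bigl((\mathrm{Id}+hv_t)_\#\rho_t,(\mathrm{Id}+hu_t)_\#\sigma_t\bigr)^2\le \int_{\mathbb R^d\times\mathbb R^d}|x-y+h(v_t(x)-u_t(y))|^2\,d\pi_t.
\]
Expanding the right-hand side gives
\[
2F(t)+2h\int_{\mathbb R^d\times\mathbb R^d}\langle x-y,v_t(x)-u_t(y)\rangle\,d\pi_t+h^2\|v_t(x)-u_t(y)\|_{L^2(\pi_t)}^2 .
\]
The last term is finite since $v_t\in L^2(\rho_t)$ and $u_t\in L^2(\sigma_t)$.

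\textbf{Step 3: compare with the true curves.} By the triangle inequality, $W_2(\rho_{t+h},\sigma_{t+h})$ is at most the competitor distance above plus two error terms, each of size $o(h)$ by Proposition~\ref{prop:first-order-approximation}. The competitor distance is bounded by $W_2(\rho_t,\sigma_t)+O(h)$. Squaring therefore gives
\[
2F(t+h)\le 2F(t)+2h\int_{\mathbb R^d\times\mathbb R^d}\langle x-y,v_t(x)-u_t(y)\rangle\,d\pi_t+o(h).
\]

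\textbf{Step 4: conclude.} Divide by $2h$ and let $h\downarrow0$. Since $F$ is differentiable at $t$, the right derivative equals $F'(t)$, which yields the inequality for every optimal $\pi_t$.

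\textbf{Main obstacle.} The delicate point is the interpretation of ``tangent velocity field''. Proposition~\ref{prop:first-order-approximation} is stated for the minimal velocity field, so I would read the hypothesis as saying that $v_t$ and $u_t$ are the minimal (tangent) fields. If instead arbitrary velocity fields were allowed, the map $\mathrm{Id}+hv_t$ need not approximate $\rho_{t+h}$ to first order. In that case I would project onto the tangent space, noting that the difference lies in the orthogonal complement. This does not obviously annihilate the cross term with $x-y$ when the plan is not induced by a map, so I would simply rely on tangency as assumed.
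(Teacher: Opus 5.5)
Your proposal is correct and follows the paper's proof step for step. It uses the same full-measure set of times where $F$ is differentiable and Proposition~\ref{prop:first-order-approximation} holds for both curves, the same competitor coupling $(\mathrm{Id}+hv_t,\mathrm{Id}+hu_t)_\#\pi_t$, the same triangle-inequality comparison with $o(|h|)$ errors, and the one-sided limit $h\downarrow0$; your reading of ``tangent velocity field'' as the minimal field is also exactly how the paper invokes the first-order approximation.
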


\begin{proof}
Since $\rho,\sigma\in AC^2(0,T;\mathcal P_2(\mathbb R^d))$, the map
\[
t\mapsto W_2(\rho_t,\sigma_t)
\]
is absolutely continuous. Hence
\[
F(t):=\frac12W_2^2(\rho_t,\sigma_t)
\]
is also absolutely continuous and differentiable for a.e. $t\in(0,T)$.

Fix a time $t\in(0,T)$ such that $F$ is differentiable at $t$ and the
first-order approximation property in
Proposition~\ref{prop:first-order-approximation} holds for both curves
$\rho_t$ and $\sigma_t$. Since $v_t$ and $u_t$ are the tangent velocity fields
of $\rho_t$ and $\sigma_t$, respectively, Proposition~\ref{prop:first-order-approximation}
gives
\[
W_2\bigl(\rho_{t+h},(\mathrm{Id}+hv_t)_\#\rho_t\bigr)=o(|h|)
\]
and
\[
W_2\bigl(\sigma_{t+h},(\mathrm{Id}+hu_t)_\#\sigma_t\bigr)=o(|h|)
\]
as $h\to0$. The set of such times has full measure in $(0,T)$.

Set
\[
\widetilde\rho_{t,h}:=(\mathrm{Id}+hv_t)_\#\rho_t,
\qquad
\widetilde\sigma_{t,h}:=(\mathrm{Id}+hu_t)_\#\sigma_t .
\]
Let $\pi_t\in\Gamma_o(\rho_t,\sigma_t)$ be arbitrary. Then
\[
\widetilde\pi_{t,h}
:=
(\mathrm{Id}+hv_t,\mathrm{Id}+hu_t)_\#\pi_t
\]
is a coupling between $\widetilde\rho_{t,h}$ and
$\widetilde\sigma_{t,h}$. Therefore,
\[
\begin{aligned}
W_2^2(\widetilde\rho_{t,h},\widetilde\sigma_{t,h})
&\le
\int_{\mathbb R^{d}\times\bbr^\dm}
\left|
(x+hv_t(x))-(y+hu_t(y))
\right|^2
\,d\pi_t(x,y) \\
&=
\int_{\mathbb R^{d}\times\bbr^\dm}
|x-y|^2\,d\pi_t(x,y) \\
&\quad
+
2h
\int_{\mathbb R^{d}\times\bbr^\dm}
\langle x-y,v_t(x)-u_t(y)\rangle
\,d\pi_t(x,y) \\
&\quad
+
h^2
\int_{\mathbb R^{d}\times\bbr^\dm}
|v_t(x)-u_t(y)|^2\,d\pi_t(x,y).
\end{aligned}
\]
Since $\pi_t$ is optimal,
\[
\int_{\mathbb R^{d}\times\bbr^\dm}|x-y|^2\,d\pi_t(x,y)
=
W_2^2(\rho_t,\sigma_t).
\]
Moreover,
\[
\int_{\mathbb R^{d}\times\bbr^\dm}
|v_t(x)-u_t(y)|^2\,d\pi_t(x,y)
<\infty,
\]
because $v_t\in L^2(\rho_t)$ and $u_t\in L^2(\sigma_t)$.

By the triangle inequality and the first-order approximation property,
\[
\begin{aligned}
W_2(\rho_{t+h},\sigma_{t+h})
&\le
W_2(\rho_{t+h},\widetilde\rho_{t,h})
+
W_2(\widetilde\rho_{t,h},\widetilde\sigma_{t,h})
+
W_2(\widetilde\sigma_{t,h},\sigma_{t+h}) \\
&=
W_2(\widetilde\rho_{t,h},\widetilde\sigma_{t,h})
+
o(|h|).
\end{aligned}
\]
Since $W_2(\widetilde\rho_{t,h},\widetilde\sigma_{t,h})$ remains bounded as
$h\to0$, it follows that
\[
W_2^2(\rho_{t+h},\sigma_{t+h})
\le
W_2^2(\widetilde\rho_{t,h},\widetilde\sigma_{t,h})
+
o(|h|).
\]
Combining the previous estimates, we obtain
\[
\begin{aligned}
F(t+h)-F(t)
&=
\frac12W_2^2(\rho_{t+h},\sigma_{t+h})
-
\frac12W_2^2(\rho_t,\sigma_t) \\
&\le
h
\int_{\mathbb R^{d}\times\bbr^\dm}
\langle x-y,v_t(x)-u_t(y)\rangle
\,d\pi_t(x,y) \\
&\quad
+
\frac{h^2}{2}
\int_{\mathbb R^{d}\times\bbr^\dm}
|v_t(x)-u_t(y)|^2\,d\pi_t(x,y)
+
o(|h|).
\end{aligned}
\]
Dividing by $h>0$ and letting $h\downarrow0$, we get
\[
F'(t)
\le
\int_{\mathbb R^{d}\times\bbr^\dm}
\langle x-y,v_t(x)-u_t(y)\rangle
\,d\pi_t(x,y).
\]
Since the chosen time $t$ belongs to a full-measure subset of $(0,T)$, the
estimate holds for a.e. $t\in(0,T)$. Since the optimal plan $\pi_t\in
\Gamma_o(\rho_t,\sigma_t)$ was arbitrary, so the estimate holds for every
such optimal plan.
\end{proof}

Theorem~\ref{thm:maximal-slope-property} shows that the pushforward curve
constructed above is a maximal slope solution for $\mathcal V_n$. For uniqueness
in the variational formulation, we must also show the converse direction: any
maximal slope solution is necessarily driven by the explicit velocity field
obtained from the first variation of $\mathcal V_n$. This follows from the
abstract velocity identification in Proposition~\ref{prop:abstract-identification},
together with the chain rule and the slope identity established above.

\begin{corollary}[Identification of maximal slope solutions]
	\label{cor:maximal-slope-identification-Vn}
	Let $(\rho_t)_{t\in[0,T]}$ be a maximal slope solution for $\mathcal V_n$
	with respect to $|\partial \mathcal V_n|$. Let $w_t$ be the minimal velocity
	field associated with $(\rho_t)$, so that
	\[
	\partial_t\rho_t+\nabla\cdot(\rho_t w_t)=0
	\qquad\text{in }\mathcal D'((0,T)\times\mathbb R^d),
	\]
	and
	\[
	\|w_t\|_{L^2(\rho_t)}=|\rho'|(t)
	\qquad\text{for a.e. }t\in(0,T).
	\]
	Then
	\[
	w_t=-\xi_{\rho_t}
	\qquad\text{for a.e. }t\in(0,T).
	\]
	In particular, $(\rho_t)$ is a weak solution to the explicit continuity
	equation
	\[
	\partial_t\rho_t+\nabla\cdot(\rho_t v[\rho_t])=0,
	\qquad
	v[\rho](x):=-\xi_\rho(x).
	\]
\end{corollary}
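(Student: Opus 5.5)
The plan is to apply Proposition~\ref{prop:abstract-identification} with $F=\mathcal V_n$ and $\xi_{\rho_t}$ the explicit field $2c_nT_{n-1}(\Sigma(\rho_t))(x-m(\rho_t))$. To do this, I will check its hypotheses one at a time.

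First, $\mathcal V_n$ is finite everywhere on $\mathcal P_2(\mathbb R^d)$ by Proposition~\ref{prop:moment-representation}, so $F^{-1}(\mathbb R)\neq\varnothing$. Second, $|\partial\mathcal V_n|$ is a strong upper gradient by Lemma~\ref{lem:strong-upper-gradient-Vn}. Third, $\xi_{\rho_t}\in L^2(\rho_t)$, since $\xi_{\rho_t}$ is affine and $\rho_t$ has finite second moment; indeed $\|\xi_{\rho_t}\|^2_{L^2(\rho_t)}=4c_n^2\operatorname{tr}(T_{n-1}(\Sigma_t)^2\Sigma_t)$. Fourth, the slope identity $\|\xi_{\rho_t}\|_{L^2(\rho_t)}=|\partial\mathcal V_n|(\rho_t)$ holds for every $t$ by Proposition~\ref{prop:slope-identity-Vn}.

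The remaining hypothesis is the chain rule written with the nonincreasing representative $\varphi$, and this is the only point needing care. Since $\rho\in AC^2(0,T;\mathcal P_2)$, part (1) of Proposition~\ref{prop:continuity-equation} gives the minimal field $w_t$ with $\int_0^T\|w_t\|^2_{L^2(\rho_t)}\,\d t<\infty$. The curve is $W_2$-continuous, hence narrowly continuous, so Proposition~\ref{prop:chain-rule-Vn} applies and yields
\[
\frac{\d}{\d t}\mathcal V_n[\rho_t]=\int\langle\xi_{\rho_t},w_t\rangle\,\d\rho_t \quad\text{for a.e. } t.
\]
I then have to replace $\mathcal V_n[\rho_t]$ by $\varphi$. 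By Lemma~\ref{lem:Vn-W2-continuity}, $t\mapsto\mathcal V_n[\rho_t]$ is continuous; by Lemma~\ref{lem:strong-upper-gradient-Vn} it is even absolutely continuous. Since $\varphi=\mathcal V_n[\rho_\cdot]$ a.e. and $\varphi$ is monotone, $\varphi$ coincides with this continuous function at every interior point. To see this, compare one-sided limits of $\varphi$ with values of the continuous function along a dense set of agreement times. Thus $\varphi'(t)=\frac{\d}{\d t}\mathcal V_n[\rho_t]$ wherever both derivatives exist, which is a.e., and the chain rule holds for $\varphi$.

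Proposition~\ref{prop:abstract-identification} then gives $w_t=-\xi_{\rho_t}=v[\rho_t]$ for a.e. $t$. Substituting this into the distributional continuity equation satisfied by $w_t$ gives the equation with $v[\rho_t]$. The initial condition follows from continuity of $\rho$ at $t=0$: test with $\varphi\in C_c^\infty([0,T)\times\mathbb R^d)$ and pass to the limit using a time cutoff $\zeta_\varepsilon$. This yields the weak formulation of Definition~\ref{def:weak-solution}. The required local integrability of $v_t$ follows from $\|v_t\|_{L^2(\rho_t)}=|\rho'|(t)\in L^2(0,T)$.
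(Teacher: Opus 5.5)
Your proposal is correct and follows essentially the same route as the paper: identify the nonincreasing representative $\varphi$ with the continuous map $t\mapsto\mathcal V_n[\rho_t]$, obtain the chain rule from Proposition~\ref{prop:chain-rule-Vn} and the slope identity from Proposition~\ref{prop:slope-identity-Vn}, and conclude with Proposition~\ref{prop:abstract-identification}. Your one-sided-limit argument for $\varphi$ and your time-cutoff recovery of the initial datum in the sense of Definition~\ref{def:weak-solution} spell out two steps the paper leaves implicit; the paper's proof itself only records the equation in $\mathcal D'((0,T)\times\mathbb R^d)$.
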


\begin{proof}
	Since $(\rho_t)$ is a maximal slope solution, we have
	\[
	\rho\in AC^2(0,T;(\mathcal P_2(\mathbb R^d),W_2)).
	\]
	By Lemma~\ref{lem:strong-upper-gradient-Vn}, the local slope
	$|\partial \mathcal V_n|$ is a strong upper gradient for $\mathcal V_n$.
	Let $\varphi$ be the nonincreasing representative appearing in
	Definition~\ref{def:maximal-slope}. Then
	\[
	\varphi(t)=\mathcal V_n[\rho_t]
	\qquad\text{for a.e. }t\in(0,T).
	\]
	On the other hand, by Lemma~\ref{lem:Vn-W2-continuity} and the
	$W_2$-continuity of $\rho$, the map
	\[
	t\longmapsto \mathcal V_n[\rho_t]
	\]
	is continuous on $[0,T]$. Hence, by the monotonicity of $\varphi$, we may
	identify $\varphi$ with $\mathcal V_n[\rho_t]$ on $(0,T)$, up to a redefinition at the endpoints.

	By Proposition~\ref{prop:chain-rule-Vn}, applied to the continuity equation
	with the minimal velocity field $w_t$, the map
	$t\mapsto \mathcal V_n[\rho_t]$ is absolutely continuous and satisfies
	\[
	\varphi'(t)
	=
	\frac{\d}{\d t}\mathcal V_n[\rho_t]
	=
	\int_{\mathbb R^d}
	\langle \xi_{\rho_t}(x),w_t(x)\rangle\,\d\rho_t(x)
	\qquad\text{for a.e. }t\in(0,T).
	\]
	Moreover, Proposition~\ref{prop:slope-identity-Vn} gives the slope identity
	\[
	\|\xi_{\rho_t}\|_{L^2(\rho_t)}
	=
	|\partial \mathcal V_n|(\rho_t)
	\qquad\text{for a.e. }t\in(0,T).
	\]
	Therefore Proposition~\ref{prop:abstract-identification}, applied with
	$F=\mathcal V_n$, yields
	\[
	w_t=-\xi_{\rho_t}
	\qquad\text{for a.e. }t\in(0,T).
	\]
	Since $v[\rho]=-\xi_\rho$, the continuity equation satisfied by $w_t$
	becomes
	\[
	\partial_t\rho_t+\nabla\cdot(\rho_t v[\rho_t])=0
	\qquad\text{in }\mathcal D'((0,T)\times\mathbb R^d).
	\]
	This proves the claim.
\end{proof}

The preceding corollary reduces the uniqueness of maximal slope solutions to
the uniqueness of weak solutions for the explicit continuity equation. We now
prove this PDE uniqueness through a $W_2$-stability estimate.

\begin{theorem}[$W_2$-stability and uniqueness]\label{thm:W2-stability-uniqueness}
Let $(\rho_t)_{t\in[0,T]}$ and $(\sigma_t)_{t\in[0,T]}$ be two weak solutions in
$AC^2(0,T;\mathcal P_2(\mathbb R^d))$ of
\[
\partial_t\mu_t+\nabla\cdot(\mu_t v[\mu_t])=0,
\]
where
\[
v[\mu](x)
=
-2c_nT_{n-1}(\Sigma(\mu))(x-m(\mu)),
\qquad
c_n:=\frac{n+1}{n!}.
\]
Then there exists a constant $C_T>0$ such that
\[
W_2(\rho_t,\sigma_t)
\le
e^{C_Tt}W_2(\rho_0,\sigma_0),
\qquad
0\le t\le T.
\]
In particular, if $\rho_0=\sigma_0$, then
\[
\rho_t=\sigma_t
\qquad\emph{for all }t\in[0,T].
\]
\end{theorem}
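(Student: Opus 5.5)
The plan is to combine Lemma~\ref{lem:W2-differential-estimate} with the local Lipschitz bound of Lemma~\ref{lem:velocity-W2-lipschitz-P2} and close with Gr\"onwall. The first step is a uniform second-moment bound on $[0,T]$. Since $\rho,\sigma\in AC^2(0,T;\mathcal P_2(\mathbb R^d))$, the curves are $W_2$-continuous on the compact interval $[0,T]$. Hence $t\mapsto M_2(\rho_t)$ and $t\mapsto M_2(\sigma_t)$ are continuous and bounded. We may therefore fix $R$ with $M_2(\rho_t)+M_2(\sigma_t)\le R$ for all $t\in[0,T]$. Alternatively, Propositions~\ref{prop:mean-conservation} and \ref{prop:covariance-dynamics} show that the mean is constant and the trace of the covariance is nonincreasing, which gives an explicit $R$ depending only on the initial data.

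Next I apply Lemma~\ref{lem:W2-differential-estimate} with $v_t=v[\rho_t]$ and $u_t=v[\sigma_t]$. That lemma requires these to be the tangent (minimal) velocity fields. I expect this to be the main technical obstacle. Since $v[\mu](x)=-A_\mu(x-m_\mu)$ with $A_\mu$ symmetric, $v[\mu]=-\nabla\bigl(\tfrac12 (x-m_\mu)^\top A_\mu (x-m_\mu)\bigr)$ is the gradient of a smooth function. Its $L^2(\mu)$-closure therefore lies in $\overline{\{\nabla\varphi:\varphi\in C_c^\infty\}}^{L^2(\mu)}$, which I would verify by approximating the quadratic potential by cutoffs $\varphi\,\eta_R$ and using dominated convergence as in the proof of Proposition~\ref{prop:mean-conservation}. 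By \cite[Proposition~8.4.5]{AGS2005}, a velocity field in the tangent space that solves the continuity equation is the minimal one, so the lemma applies.

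With an optimal plan $\pi_t$, I split
\[
\langle x-y, v[\rho_t](x)-v[\sigma_t](y)\rangle
=\langle x-y, v[\rho_t](x)-v[\rho_t](y)\rangle+\langle x-y, v[\rho_t](y)-v[\sigma_t](y)\rangle .
\]
The first term equals $-\langle x-y,A_{\rho_t}(x-y)\rangle\le 0$, since $A_{\rho_t}$ is positive semidefinite: $T_{n-1}(\Sigma)$ has eigenvalues $e_{n-1}(\widehat\lambda_i)\ge0$ by the computation in Corollary~\ref{cor:eigenvalue-system}. Even without using the sign, this term is bounded by $\|A_{\rho_t}\|_{\mathrm{op}}W_2^2\le C_RW_2^2$. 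For the second term I use Cauchy--Schwarz and the $y$-marginal $\sigma_t$, which give the bound $W_2(\rho_t,\sigma_t)\,\|v[\rho_t]-v[\sigma_t]\|_{L^2(\sigma_t)}$. Lemma~\ref{lem:velocity-W2-lipschitz-P2} then bounds this by $C_RW_2^2(\rho_t,\sigma_t)$.

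Combining the two estimates gives $F'(t)\le 2C_R F(t)$ for a.e.\ $t$, where $F=\tfrac12W_2^2$. Since $F$ is absolutely continuous, Gr\"onwall yields $W_2(\rho_t,\sigma_t)\le e^{C_Tt}W_2(\rho_0,\sigma_0)$ with $C_T:=C_R$. Uniqueness follows by taking $\rho_0=\sigma_0$.
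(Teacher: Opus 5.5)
Your proposal is correct and follows essentially the same route as the paper. Both arguments take a uniform bound on $M_2(\rho_t)+M_2(\sigma_t)$ from $W_2$-continuity, apply Lemma~\ref{lem:W2-differential-estimate}, and split the integrand the same way. The term $-\langle x-y,A_{\rho_t}(x-y)\rangle$ is nonpositive, and the other is controlled via Cauchy--Schwarz and Lemma~\ref{lem:velocity-W2-lipschitz-P2}, followed by Gr\"onwall. Your cutoff argument showing that $v[\mu]$ lies in the tangent space, and hence is the minimal velocity field, makes explicit a step the paper only asserts.
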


\begin{proof}
Since $\rho,\sigma\in AC^2(0,T;\mathcal P_2(\mathbb R^d))$, the curves are
continuous with respect to $W_2$. In particular, by the $W_2$-continuity of
the second moment, we have
\[
R_T:=
\sup_{0\le t\le T}
\left(M_2(\rho_t)+M_2(\sigma_t)\right)
<\infty.
\]

Set
\[
F(t):=\frac12W_2^2(\rho_t,\sigma_t).
\]
Since the prescribed velocity field is given by
\[
v[\mu]=-\nabla\left(\frac{\delta\mathcal V_n}{\delta\mu}\right),
\]
it belongs to the Wasserstein tangent space at $\mu$. Hence, for the weak
solutions under consideration, $v[\rho_t]$ and $v[\sigma_t]$ are the tangent
velocity fields associated with $\rho_t$ and $\sigma_t$, respectively.
Therefore Lemma~\ref{lem:W2-differential-estimate}, applied with
\[
v_t=v[\rho_t],
\qquad
u_t=v[\sigma_t],
\]
gives, for a.e. $t\in(0,T)$ and every
$\pi_t\in\Gamma_o(\rho_t,\sigma_t)$,
\[
F'(t)
\le
\int_{\mathbb R^{d}\times\bbr^\dm}
\left\langle x-y,
v[\rho_t](x)-v[\sigma_t](y)
\right\rangle
\,d\pi_t(x,y).
\]
We decompose the right-hand side as
\[
\begin{aligned}
F'(t)
&\le
\int_{\mathbb R^{d}\times\bbr^\dm}
\left\langle x-y,
v[\rho_t](x)-v[\rho_t](y)
\right\rangle\,d\pi_t(x,y) \\
&\quad+
\int_{\mathbb R^{d}\times\bbr^\dm}
\left\langle x-y,
v[\rho_t](y)-v[\sigma_t](y)
\right\rangle\,d\pi_t(x,y) \\
&=:I_1(t)+I_2(t).
\end{aligned}
\]
We first estimate $I_1(t)$. Write
\[
A_{\rho_t}:=2c_nT_{n-1}(\Sigma(\rho_t)).
\]
Then
\[
v[\rho_t](x)-v[\rho_t](y)
=
-A_{\rho_t}(x-y).
\]
Since $\Sigma(\rho_t)$ is symmetric and positive semidefinite, we may write
\[
\Sigma(\rho_t)=P^\top D P,
\qquad
D=\operatorname{diag}(\lambda_1,\dots,\lambda_d),
\qquad
\lambda_i\ge0.
\]
Because $T_{n-1}$ is a polynomial in its matrix argument,
\[
T_{n-1}(\Sigma(\rho_t))
=
P^\top T_{n-1}(D)P.
\]
Moreover,
\[
T_{n-1}(D)
=
\operatorname{diag}
\left(
e_{n-1}(\widehat\lambda_1),
\dots,
e_{n-1}(\widehat\lambda_d)
\right),
\]
and each diagonal entry is nonnegative. Hence
$T_{n-1}(\Sigma(\rho_t))$ is symmetric nonnegative, and so is
$A_{\rho_t}$. Therefore,
\[
I_1(t)
=
-\int_{\mathbb R^{d}\times\bbr^\dm}
\left\langle x-y,A_{\rho_t}(x-y)\right\rangle
\,d\pi_t(x,y)
\le 0.
\]
We next estimate $I_2(t)$. By the Cauchy--Schwarz inequality,
\[
\begin{aligned}
I_2(t)
&\le
\left(\int_{\mathbb R^{d}\times\bbr^\dm} |x-y|^2\,d\pi_t(x,y)\right)^{1/2}
\left(
\int_{\mathbb R^{d}\times\bbr^\dm}
|v[\rho_t](y)-v[\sigma_t](y)|^2\,d\pi_t(x,y)
\right)^{1/2} \\
&=
W_2(\rho_t,\sigma_t)
\|v[\rho_t]-v[\sigma_t]\|_{L^2(\sigma_t)}.
\end{aligned}
\]
By Lemma~\ref{lem:velocity-W2-lipschitz-P2}, with $R=R_T$, there exists
$C_T>0$ such that
\[
\|v[\rho_t]-v[\sigma_t]\|_{L^2(\sigma_t)}
\le
C_TW_2(\rho_t,\sigma_t)
\]
for all $t\in[0,T]$. Hence
\[
I_2(t)
\le
C_TW_2^2(\rho_t,\sigma_t).
\]

Combining the estimates for $I_1$ and $I_2$, we obtain
\[
F'(t)
\le
C_TW_2^2(\rho_t,\sigma_t)
=
2C_TF(t)
\qquad\text{for a.e. }t\in(0,T).
\]
By Gronwall's inequality,
\[
F(t)\le e^{2C_Tt}F(0).
\]
Equivalently,
\[
W_2^2(\rho_t,\sigma_t)
\le
e^{2C_Tt}W_2^2(\rho_0,\sigma_0).
\]
Taking square roots and renaming the constant $C_T$ if necessary, we get
\[
W_2(\rho_t,\sigma_t)
\le
e^{C_Tt}W_2(\rho_0,\sigma_0),
\qquad
0\le t\le T.
\]
If $\rho_0=\sigma_0$, then the right-hand side vanishes. Therefore
\[
W_2(\rho_t,\sigma_t)=0
\qquad\text{for all }t\in[0,T],
\]
and hence
\[
\rho_t=\sigma_t
\qquad\text{for all }t\in[0,T].
\]
\end{proof}

\noindent\underline{\textbf{Proof of Theorem 1.1}}

Let $\rho_0\in\mathcal P_2(\mathbb R^d)$ be given. We first construct a candidate solution by the finite-dimensional reduction. Let \[ \Sigma(\rho_0)=P^\top D_0P, \qquad D_0=\operatorname{diag}(\lambda_1^0,\dots,\lambda_d^0), \] with $P$ orthogonal. By Proposition~\ref{prop:global-reduced-system}, the reduced eigenvalue system \[ \frac{\d}{\d t}\lambda_i(t) = -4c_n\lambda_i(t)e_{n-1}(\widehat\lambda_i(t)), \qquad 1\le i\le d, \] admits a unique global solution in $[0,\infty)^d$. Define \[ D_t:=\operatorname{diag}(\lambda_1(t),\dots,\lambda_d(t)), \qquad \Sigma_t:=P^\top D_tP. \] Using this reduced system, we define the transition map $X_t$ and the pushforward curve \[ \rho_t:=(X_t)_\#\rho_0 \] as in Definition~\ref{def:transition-pushforward}. By Proposition~\ref{prop:pushforward-recovers-moments}, the constructed curve satisfies \[ m(\rho_t)=m(\rho_0), \qquad \Sigma(\rho_t)=\Sigma_t \] for all $t\ge0$. In particular, the covariance matrix of the solution is governed by the finite-dimensional system stated in the theorem. Moreover, Proposition~\ref{prop:pushforward-weak-solution} shows that $(\rho_t)_{t\ge0}$ is a weak solution to the explicit continuity equation \[ \partial_t\rho_t+\nabla\cdot(\rho_t v[\rho_t])=0, \qquad \rho|_{t=0}=\rho_0. \] On the other hand, Theorem~\ref{thm:maximal-slope-property} shows that the same curve is a maximal slope solution for $\mathcal V_n$ with respect to $|\partial\mathcal V_n|$. More precisely, it satisfies the energy dissipation identity \[ \mathcal V_n[\rho_t] + \frac12\int_s^t |\rho'|^2(r)\,\d r + \frac12\int_s^t |\partial\mathcal V_n|^2(\rho_r)\,\d r = \mathcal V_n[\rho_s], \qquad 0\le s\le t<\infty. \] This proves the existence part. We next prove uniqueness in the PDE formulation. Let $(\sigma_t)_{t\in[0,T]}$ be any $AC^2$ weak solution to \[ \partial_t\sigma_t+\nabla\cdot(\sigma_t v[\sigma_t])=0, \qquad \sigma|_{t=0}=\rho_0 \] on a finite interval $[0,T]$. Applying Theorem~\ref{thm:W2-stability-uniqueness} to $\rho_t$ and $\sigma_t$, we obtain \[ W_2(\rho_t,\sigma_t) \le e^{C_Tt}W_2(\rho_0,\sigma_0) =0, \qquad 0\le t\le T. \] Hence $\sigma_t=\rho_t$ for all $t\in[0,T]$. Since $T>0$ is arbitrary, the $AC^2$ weak solution of the explicit continuity equation is unique globally. It remains to prove uniqueness in the variational formulation. Let $(\mu_t)_{t\in[0,T]}$ be any maximal slope solution for $\mathcal V_n$ with respect to $|\partial\mathcal V_n|$ and with initial datum $\mu_0=\rho_0$. By Corollary~\ref{cor:maximal-slope-identification-Vn}, $(\mu_t)$ is a weak solution to the same explicit continuity equation \[ \partial_t\mu_t+\nabla\cdot(\mu_t v[\mu_t])=0, \qquad \mu|_{t=0}=\rho_0. \] Since maximal slope solutions belong to $AC^2(0,T;(\mathcal P_2(\mathbb R^d),W_2))$, the uniqueness just proved for the explicit continuity equation gives \[ \mu_t=\rho_t \qquad \text{for all }t\in[0,T]. \] Again, since $T>0$ is arbitrary, the maximal slope solution is unique globally. Therefore the constructed pushforward curve is simultaneously the unique $AC^2$ weak solution of the explicit continuity equation and the unique maximal slope solution for $\mathcal V_n$ with respect to $|\partial\mathcal V_n|$.
\qed

\section{Long-time behavior and convergence rates} \label{sec:4}
\setcounter{equation}{0}

To investigate the long-time behavior of \eqref{PDE} with \eqref{initial} and \eqref{vt}, we study the long-time behavior of eigenvalues $(\lambda_1,\dots,\lambda_d)$. By Proposition \ref{prop:global-reduced-system}, the dynamics is given as
\[
\frac{\d}{\d t}\lambda_i(t) = -4\left(\frac{n+1}{n!}\right)\lambda_i(t)\,e_{n-1}(\widehat\lambda_i(t)), \qquad 1\le i\le d.
\]
From the symmetry of the above system, 
\[
\lambda_i^0<\lambda_j^0\Longrightarrow \lambda_i(t)<\lambda_j(t)\quad \forall~t>0.
\]
Without loss of generality, one can assume
\begin{align*}
\lambda_1^0\geq\lambda_2^0\geq\cdots\geq\lambda_{\dm}^0\geq0.
\end{align*}
Then, we also get
\begin{align}\label{monotoncondi}
\lambda_1(t)\geq\lambda_2(t)\geq\cdots\geq\lambda_{\dm}(t)\geq0\quad\forall~t\geq0.
\end{align}
From \eqref{2.2}, each eigenvalue $\lambda_i$ is non-negative and decreases monotonically, and therefore converges to a non-negative constant $\lambda_i^\infty$. We know at most $n-1$ numbers among $\{\lambda_1^\infty,\ldots,\lambda_d^\infty\}$ are nonzero since $\lambda_i^\infty e_{n-1}(\hat{\lambda}_i^\infty)=0$, and it implies
\[
\lambda_n^\infty=\lambda_{n+1}^\infty=\cdots=\lambda_\dm^\infty=0.
\]
The convergence rate of each $\lambda_i(t)$ depends on the initial configuration. We know there exists $\ell\in \{1, 2, \cdots, n\}$ satisfying the following condition:
\begin{align}\label{initial-condi-l}
\lambda_1^0\geq \cdots\geq \lambda_{\ell-1}^0>\lambda_{\ell}^0=\cdots=\lambda_{n-1}^0=\lambda_n^0\geq \lambda_{n+1}^0\geq\cdots \geq \lambda_d^0.
\end{align}
If this condition is satisfied, then we have
\[
\lambda_{\ell}^\infty=\lambda_{\ell+1}^\infty=\cdots=\lambda_n^\infty=\cdots=\lambda_\dm^\infty=0
\]
since $\lambda_{\ell}(t)=\lambda_{\ell+1}(t)=\cdots=\lambda_n(t)$ for all $t$ and $\lambda_n^\infty=0$.

If \(\lambda_n^0=0\), then the solution is stationary. Indeed, by the ordering
\[
\lambda_1^0\ge \cdots \ge \lambda_d^0\ge 0,
\]
we have
\[
\lambda_n^0=\lambda_{n+1}^0=\cdots=\lambda_d^0=0.
\]
Hence at most \(n-1\) eigenvalues are initially positive. For \(1\le i\le n-1\), the quantity
\(e_{n-1}(\widehat\lambda_i(0))\) vanishes, because after removing \(\lambda_i^0\) there are at most
\(n-2\) positive eigenvalues left. For \(i\ge n\), we have \(\lambda_i^0=0\). Therefore the right-hand side of the reduced system vanishes for every \(i\), and hence
\[
\lambda_i(t)=\lambda_i^0,\qquad 1\le i\le d,\quad t\ge0.
\]
Consequently, \(\Sigma(\rho_t)=\Sigma(\rho_0)\), and the transition map is the identity. Thus
\[
\rho_t=\rho_0
\qquad\text{for all }t\ge0.
\]
This proves the stationary case in Theorem~1.2. 

Now, consider the case \(n=1\) and $\lambda_n^0>0$, which is completely explicit. In this case,
\[
e_0(\widehat\lambda_i)=1,
\qquad 1\le i\le d,
\]
and the reduced eigenvalue system becomes
\[
\frac{\d}{\d t}\lambda_i(t)=-8\lambda_i(t),
\qquad 1\le i\le d.
\]
Hence
\[
\lambda_i(t)=\lambda_i^0e^{-8t},
\qquad 1\le i\le d.
\]
Equivalently, since \(T_0(\Sigma)=I\) and \(c_1=2\), the transition map satisfies
\[
\dot Q_t=-4Q_t,\qquad Q_0=I,
\]
and therefore
\[
Q_t=e^{-4t}I.
\]
Thus the solution is given explicitly by a pushforward map as follows:
\[
\rho_t=\bigl(m_0+e^{-4t}(x-m_0)\bigr)_{\#}\rho_0.
\]
Since \(\lambda_1^0>0\) in the present non-stationary case, we have
\[
\operatorname{tr}\Sigma(\rho_0)>0.
\]
Moreover, \(\rho_t\) converges in \(W_2\) to
\[
\rho_\infty=\delta_{m_0}.
\]
Indeed, since the target measure is a Dirac mass, the only coupling between \(\rho_t\) and
\(\delta_{m_0}\) gives
\[
W_2^2(\rho_t,\delta_{m_0})
=
\int_{\mathbb R^d}|x-m_0|^2\,\d\rho_t(x)
=
\operatorname{tr}\Sigma(\rho_t)
=
e^{-8t}\operatorname{tr}\Sigma(\rho_0).
\]
Consequently,
\[
W_2(\rho_t,\rho_\infty)
=
e^{-4t}\bigl(\operatorname{tr}\Sigma(\rho_0)\bigr)^{1/2}.
\]
In particular,
\[
-\ln W_2(\rho_t,\rho_\infty)
=
4t-\frac12\ln\operatorname{tr}\Sigma(\rho_0)
\simeq t.
\]
Therefore, Theorem~1.2 holds for \(n=1\), with \(\ell=1\), $\operatorname{rank}\Sigma(\rho_\infty)=0,~
\operatorname{supp}\rho_\infty=\{m_0\}.$\\

In the rest of this section, we assume $n\ge2$ and $\lambda_n^0>0$.

\begin{lemma}\label{lemma:lminus1nonzero}
Let $(\lambda_1(t), \lambda_2(t), \dots, \lambda_\dm(t))$ be a solution to \eqref{2.2} with $n\geq2$. Furthermore, the initial condition satisfies \eqref{initial-condi-l} for some $\ell\in \{1, 2, \cdots, n\}$ and $\lambda_n^0>0$. If $\ell\geq 2$, we have $\lambda_{\ell-1}^\infty\neq0$.
\end{lemma}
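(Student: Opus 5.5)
Since $\lambda_1(t)\ge\cdots\ge\lambda_{\ell-1}(t)$ by \eqref{monotoncondi}, it suffices to show $\lambda_{\ell-1}^\infty>0$. We work with $\log\lambda_{\ell-1}$ and control the time integral of the rate $e_{n-1}(\widehat\lambda_{\ell-1}(t))$. Write $\kappa_n:=4c_n$. Positivity is preserved by uniqueness: the solution cannot reach zero in finite time, because $\lambda_i\equiv0$ is invariant. Hence
\[
\ln\lambda_{\ell-1}(t)=\ln\lambda_{\ell-1}^0-\kappa_n\int_0^t e_{n-1}(\widehat\lambda_{\ell-1}(s))\,\d s ,
\]
and the goal is to prove $\int_0^\infty e_{n-1}(\widehat\lambda_{\ell-1}(s))\,\d s<\infty$.

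The key step is a comparison with the ratio $r(t):=\lambda_\ell(t)/\lambda_{\ell-1}(t)$. Since $\lambda_\ell^0<\lambda_{\ell-1}^0$, we have $r(0)<1$. From \eqref{2.2} and the identity
\[
e_{n-1}(\widehat\lambda_i)-e_{n-1}(\widehat\lambda_j)=(\lambda_j-\lambda_i)\,e_{n-2}(\widehat\lambda_{i,j})
\]
(with both indices removed on the right), we get
\[
\frac{\d}{\d t}\ln r=-\kappa_n\bigl(e_{n-1}(\widehat\lambda_\ell)-e_{n-1}(\widehat\lambda_{\ell-1})\bigr)=-\kappa_n(\lambda_{\ell-1}-\lambda_\ell)\,e_{n-2}(\widehat\lambda_{\ell-1,\ell})\le0 .
\]
So $r(t)\le r(0)<1$ for all $t$, that is, $\lambda_{\ell-1}-\lambda_\ell\ge(1-r(0))\lambda_{\ell-1}$.

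Next we bound $e_{n-1}(\widehat\lambda_{\ell-1})$ by a multiple of $-\frac{\d}{\d t}\ln r$. Every monomial of $e_{n-1}(\widehat\lambda_{\ell-1})$ has $n-1$ distinct indices different from $\ell-1$. We split into two cases.
\begin{enumerate}
\item Monomials containing $\lambda_\ell$ are of the form $\lambda_\ell\cdot(\text{monomial of }e_{n-2}(\widehat\lambda_{\ell-1,\ell}))$. Since $\lambda_\ell\le\lambda_{\ell-1}$ and $\lambda_{\ell-1}\le(1-r(0))^{-1}(\lambda_{\ell-1}-\lambda_\ell)$, their sum is bounded by $C(\lambda_{\ell-1}-\lambda_\ell)e_{n-2}(\widehat\lambda_{\ell-1,\ell})$.
\item Monomials avoiding both $\ell-1$ and $\ell$ consist of $n-1$ indices from $\{1,\dots,d\}\setminus\{\ell-1,\ell\}$. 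Among these, at most $\ell-2<n-1$ indices lie below $\ell-1$, so at least one index $k>\ell$ appears, with $\lambda_k\le\lambda_\ell\le\lambda_{\ell-1}$. Swapping $\lambda_k$ for a factor $\lambda_{\ell-1}$ bounds the monomial by $\lambda_{\ell-1}$ times a monomial of $e_{n-2}(\widehat\lambda_{\ell-1,\ell})$, up to a combinatorial constant.
\end{enumerate}
Combining the two cases,
\[
e_{n-1}(\widehat\lambda_{\ell-1})\le C\lambda_{\ell-1}e_{n-2}(\widehat\lambda_{\ell-1,\ell})\le C'(\lambda_{\ell-1}-\lambda_\ell)e_{n-2}(\widehat\lambda_{\ell-1,\ell})=-\frac{C'}{\kappa_n}\frac{\d}{\d t}\ln r .
\]

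This naive integration fails, however, because $\ln r$ may tend to $-\infty$. Since $\ell\le n$ we have $\lambda_\ell^\infty=0$, so $r\to0$ and $\int_0^\infty(-\ln r)'$ diverges. This is the main obstacle. To overcome it, we refine case (2) to extract a factor $\lambda_\ell$. A monomial avoiding $\ell-1$ and $\ell$ contains some $\lambda_k\le\lambda_\ell$ with $k>\ell$, so it is bounded by $\lambda_\ell\,e_{n-2}(\widehat\lambda_{\ell-1,\ell})$ up to a constant. Case (1) monomials are literally $\lambda_\ell\,e_{n-2}(\widehat\lambda_{\ell-1,\ell})$. Hence
\[
e_{n-1}(\widehat\lambda_{\ell-1})\le C\lambda_\ell\,e_{n-2}(\widehat\lambda_{\ell-1,\ell})=C\,r\,\lambda_{\ell-1}e_{n-2}(\widehat\lambda_{\ell-1,\ell})\le C'' r\,(-\ln r)'=-C''\,r' .
\]
Integrating in time gives $\int_0^\infty e_{n-1}(\widehat\lambda_{\ell-1})\,\d s\le C''r(0)<\infty$, so $\lambda_{\ell-1}^\infty\ge\lambda_{\ell-1}^0e^{-\kappa_nC''r(0)}>0$. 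The only care needed is to track the combinatorial constant in the swapping step, which depends only on $n$ and $d$.
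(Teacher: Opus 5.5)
Your proof is correct, and it takes a genuinely different route from the paper's. The paper argues by contradiction. Assuming $\lambda_{\ell-1}^\infty=0$, it uses l'H\^opital's rule (together with $\lambda_\ell^\infty=0$) to show that the nondecreasing ratio $\lambda_{\ell-1}/\lambda_\ell$ stays at most $\binom{d-1}{n-1}$. It then integrates a $\ln\ln$ differential inequality for that ratio to get $\int_0^\infty \lambda_{\ell-1}^{-1}\prod_{j=1}^n\lambda_j\,\d\tau<\infty$. Finally, the bound $e_{n-1}(\widehat\lambda_{\ell-1})\le\binom{d-1}{n-1}\lambda_{\ell-1}^{-1}\prod_{j=1}^n\lambda_j$ forces that same integral to diverge if $\lambda_{\ell-1}\to0$, which is the contradiction.

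Both arguments rest on the identity for $\frac{\d}{\d t}\ln(\lambda_\ell/\lambda_{\ell-1})$, but you use it directly. The pigeonhole observation (every monomial of $e_{n-1}(\widehat\lambda_{\ell-1})$ has a factor at most $\lambda_\ell$), combined with the unconditional bound $r\le r(0)<1$, dominates the decay rate of $\lambda_{\ell-1}$ by the exact derivative $-C''r'$. This removes the contradiction hypothesis, the l'H\^opital step, and any need to know $\lambda_\ell^\infty=0$ in advance. It is also quantitative. You can take $C=d-n+1$, since the map $S\mapsto S\setminus\{\max S\}$ on $(n-1)$-element index sets avoiding $\ell-1,\ell$ is at most $(d-n)$-to-one. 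You then obtain
\[
\lambda_{\ell-1}^\infty\ge\lambda_{\ell-1}^0\exp\Bigl(-(d-n+1)\,\frac{\lambda_\ell^0}{\lambda_{\ell-1}^0-\lambda_\ell^0}\Bigr),
\]
a bound that degenerates only as the gap $\lambda_{\ell-1}^0-\lambda_\ell^0$ closes. The paper's contradiction argument gives no such estimate.

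In the write-up, drop the abandoned first attempt. In particular, the claim there that $r\to0$ is circular at that stage, since it presupposes $\lambda_{\ell-1}^\infty>0$.
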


\begin{proof}
Since we will use the proof by contradiction, we assume $\lambda_{\ell-1}^\infty=0$. From l'H\^{o}pital's rule and $\lambda_{\ell}^\infty=0$, we have 
\[
\limsup_{t\to\infty}\frac{\lambda_{\ell-1}(t)}{\lambda_{\ell}(t)}\leq\limsup_{t\to\infty}\frac{\lambda_{\ell-1}'(t)}{\lambda_{\ell}'(t)}=\limsup_{t\to\infty}\frac{\lambda_{\ell-1}e_{n-1}(\hat{\lambda}_{\ell-1})}{\lambda_{\ell}e_{n-1}(\hat{\lambda}_{\ell})}.
\]
From the two inequalities:
\[
\lambda_{\ell-1}e_{n-1}(\hat{\lambda}_{\ell-1})\leq{d-1\choose n-1} \lambda_1\lambda_2\cdots\lambda_n\quad\text{and}\quad \lambda_{\ell}e_{n-1}(\hat{\lambda}_{\ell})\geq  \lambda_1\lambda_2\cdots\lambda_n,
\]
we get
\begin{align}\label{limsupsup}
\limsup_{t\to\infty}\frac{\lambda_{\ell-1}(t)}{\lambda_{\ell}(t)}\leq\limsup_{t\to\infty}\frac{\lambda_{\ell-1}e_{n-1}(\hat{\lambda}_{\ell-1})}{\lambda_{\ell}e_{n-1}(\hat{\lambda}_{\ell})}\leq {d-1\choose n-1}<\infty.
\end{align}
On the other hand, for $n\geq2$, we have
\begin{align*}
\frac{\d}{\d t}\ln\left(\frac{\lambda_{\ell-1}}{\lambda_{\ell}}\right)&=\frac{\d}{\d t}\left(\ln\lambda_{\ell-1}-\ln\lambda_{\ell}\right)\\
&=-4\left(\frac{n+1}{n!}\right)\left(e_{n-1}(\hat{\lambda}_{\ell-1})-e_{n-1}(\hat{\lambda}_{\ell})\right)\\
&=-4\left(\frac{n+1}{n!}\right)\left(\lambda_{\ell}e_{n-2}(\hat{\lambda}_{\ell-1},\hat{\lambda}_{\ell})-\lambda_{\ell-1}e_{n-2}(\hat{\lambda}_{\ell-1}, \hat{\lambda}_{\ell})\right)\\
&=4\left(\frac{n+1}{n!}\right)\left(\lambda_{\ell-1}-\lambda_{\ell}\right)e_{n-2}(\hat{\lambda}_{\ell-1}, \hat{\lambda}_{\ell}).
\end{align*}
We use 
\[
\left(\frac{\lambda_{\ell-1}}{\lambda_{\ell}}-1 \right)\geq \ln \left(1+\left(\frac{\lambda_{\ell-1}}{\lambda_{\ell}}-1 \right)\right)
\]
to obtain
\begin{align*}
  \frac{\d}{\d t}\ln\left(\frac{\lambda_{\ell-1}}{\lambda_{\ell}}\right)& =4\left(\frac{n+1}{n!}\right)\left(\lambda_{\ell-1}-\lambda_{\ell}\right)e_{n-2}(\hat{\lambda}_{\ell-1}, \hat{\lambda}_{\ell})\\
  &\geq4\left(\frac{n+1}{n!}\right)\lambda_{\ell}\ln\left(\frac{\lambda_{\ell-1}}{\lambda_{\ell}}\right)e_{n-2}(\hat{\lambda}_{\ell-1}, \hat{\lambda}_{\ell})\\
  &\geq4\left(\frac{n+1}{n!}\right)\lambda_{\ell}\ln\left(\frac{\lambda_{\ell-1}}{\lambda_{\ell}}\right)\frac{\prod_{j=1}^{n}\lambda_j}{\lambda_{\ell-1}\lambda_{\ell}}.
\end{align*}
This inequality can be simplified into
\[
	\frac{\d}{\d t}\ln\left(\ln\left(\frac{\lambda_{\ell-1}}{\lambda_{\ell}}\right)\right)\geq4\left(\frac{n+1}{n!}\right)\frac{1}{\lambda_{\ell-1}}\prod_{j=1}^{n}\lambda_j
\]
and we integrate both sides on $[0, t]$ to get
\begin{align*}
\ln\left(\ln\left(\frac{\lambda_{\ell-1}(t)}{\lambda_{\ell}(t)}\right)\right)-\ln\left(\ln\left(\frac{\lambda_{\ell-1}^0}{\lambda_{\ell}^0}\right)\right)\geq 4\left(\frac{n+1}{n!}\right)\int_{0}^t \frac{\prod_{j=1}^{n}\lambda_j(\tau)}{\lambda_{\ell-1}(\tau)}d\tau\quad\forall~t\geq0.
\end{align*}
By substituting \eqref{limsupsup} into the above inequality, we get
\begin{align}\label{loglogineq}
\ln\left({d-1\choose n-1}\right)-\ln\left(\ln\left(\frac{\lambda_{\ell-1}^0}{\lambda_{\ell}^0}\right)\right)\geq 4\left(\frac{n+1}{n!}\right)\int_{0}^t \frac{\prod_{j=1}^{n}\lambda_j(\tau)}{\lambda_{\ell-1}(\tau)}d\tau\quad\forall~t\geq0.
\end{align}

On the other hand, we also have the following inequality:
\begin{align}\label{logineq}
\frac{\d}{\d t}\ln\lambda_{\ell-1}=-4\left(\frac{n+1}{n!}\right)e_{n-1}(\hat{\lambda}_{\ell-1})\geq -4\left(\frac{n+1}{n!}\right){d-1\choose n-1}\frac{\prod_{j=1}^{n}\lambda_j}{\lambda_{\ell-1}}.
\end{align}
The equality follows from \eqref{2.2} and the inequality follows from
\[
\lambda_1\geq \lambda_{2}\geq\cdots\geq \lambda_n\geq0.
\]
Integrate \eqref{logineq} on $[0,t]$ to get
\[
\ln\lambda_{\ell-1}(t)-\ln\lambda_{\ell-1}^0\geq  -4\left(\frac{n+1}{n!}\right){d-1\choose n-1}\int_0^t\frac{\prod_{j=1}^{n}\lambda_j(\tau)}{\lambda_{\ell-1}(\tau)}d\tau.
\]
From the above inequality and $\lim_{t\to\infty}\lambda_{\ell-1}(t)=\lambda_{\ell-1}^\infty=0$, we can conclude that
\[
\int_{0}^\infty \frac{\prod_{j=1}^{n}\lambda_j(\tau)}{\lambda_{\ell-1}(\tau)}d\tau=\infty.
\]
However, this result contradicts   \eqref{loglogineq}. Therefore, we can conclude that $\lambda_{\ell-1}^\infty>0$.
\end{proof}

When \(\ell=1\), there is no positive limiting eigenvalue, and the products over
\(1\le j\le \ell-1\)  are understood as empty products equal to \(1\).

One can summarize the long-time behavior of eigenvalues as the following remark.
\begin{remark}
Suppose that the assumptions of Lemma \ref{lemma:lminus1nonzero} are satisfied. Then,
\[
\begin{cases}
\lambda_j^\infty>0\quad\forall~1\leq j\leq \ell-1,\\
\lambda_j^\infty=0\quad\forall~\ell\leq j\leq \dm.
\end{cases}
\]
\end{remark}

We next investigate the convergence rate of $\lambda_{\ell}(t)=\cdots=\lambda_n(t)$ first.

\begin{lemma}\label{lemma:lambdalrate}
Suppose that the assumptions of Lemma \ref{lemma:lminus1nonzero} are satisfied.

\begin{enumerate}
\item If $\ell=n$, then $\lambda_{\ell}$ converges exponentially fast to zero.

\item If $\ell<n$, then $\lambda_{\ell}(t)$ decays algebraically to zero with rate $t^{-\frac{1}{n-\ell}}$ as $t\to\infty$.
\end{enumerate}
\end{lemma}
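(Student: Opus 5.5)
Write $\kappa_n:=4c_n$ and $\mu(t):=\lambda_\ell(t)=\cdots=\lambda_n(t)$; this common value is preserved by the symmetry of \eqref{2.2}. The plan is to isolate the dominant part of $e_{n-1}(\widehat\lambda_\ell)$ and compare $\mu$ with a scalar ODE. By the preceding Remark, $\lambda_j(t)\to\lambda_j^\infty>0$ for $j\le \ell-1$. Since the sequence is monotone, $\lambda_j(t)\ge\lambda_j^\infty$ for all $t$, and $\lambda_j(t)\le\lambda_j^0$. All other eigenvalues $\lambda_j$ with $j\ge\ell$ satisfy $\lambda_j\le\mu\to0$.

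We expand $e_{n-1}(\widehat\lambda_\ell)$ as a sum of products of $n-1$ eigenvalues chosen from $\{\lambda_j\}_{j\neq\ell}$. Each term must use at least $n-1-(\ell-1)=n-\ell$ indices from $\{j\ge\ell,\ j\ne\ell\}$. Hence every term is bounded by $C\mu^{n-\ell}$, where $C$ depends only on $\lambda_1^0$, $d$ and $n$. Moreover, the term $\lambda_1\cdots\lambda_{\ell-1}\lambda_{\ell+1}\cdots\lambda_n$ is bounded below by $\big(\prod_{j<\ell}\lambda_j^\infty\big)\mu^{n-\ell}$. Writing $a:=\prod_{j<\ell}\lambda_j^\infty>0$, this gives the two-sided differential inequality
\[
-\kappa_n C\,\mu^{\,n-\ell+1}\le \mu'(t)\le -\kappa_n a\,\mu^{\,n-\ell+1},\qquad t\ge0 .
\]
Here $C$ can be taken as $\binom{d-1}{n-1}\max(1,\lambda_1^0)^{n-1}$. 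When $\ell=1$, the empty product gives $a=1$.

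In case (1), $\ell=n$, the exponent is $1$. Gr\"onwall's inequality applied to $\ln\mu$ then yields $\mu^0e^{-\kappa_nCt}\le\mu(t)\le\mu^0e^{-\kappa_nat}$, which is exponential decay from both sides. In case (2), $\ell<n$, set $k:=n-\ell\ge1$ and consider $\mu^{-k}$. Its derivative is $-k\mu^{-k-1}\mu'$, which lies between $k\kappa_na$ and $k\kappa_nC$. Integrating gives
\[
(\mu^0)^{-k}+k\kappa_nat\le\mu(t)^{-k}\le(\mu^0)^{-k}+k\kappa_nCt.
\]
Hence $\mu(t)\simeq t^{-1/k}=t^{-\frac1{n-\ell}}$ for large $t$. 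Positivity of $\mu$ throughout, needed to divide by $\mu$, follows from uniqueness for \eqref{2.2}, since $\mu^0=\lambda_n^0>0$ and $\mu\equiv0$ is itself a solution.

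The main obstacle is the lower bound on the coefficient of $\mu^{n-\ell}$, which requires the top $\ell-1$ eigenvalues to stay bounded away from zero uniformly in time. This is exactly where Lemma~\ref{lemma:lminus1nonzero} and monotonicity enter. Without them one gets only an upper bound of the form $\mu'\ge -C\mu^{n-\ell+1}$, and no matching decay rate from the other side.
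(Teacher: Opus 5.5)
Your proposal is correct and follows essentially the same route as the paper. You sandwich $e_{n-1}(\widehat\lambda_\ell)$ between $\bigl(\prod_{j<\ell}\lambda_j^\infty\bigr)\lambda_\ell^{\,n-\ell}$ and a constant times $\lambda_\ell^{\,n-\ell}$, using monotonicity, the Remark, and $\lambda_\ell=\cdots=\lambda_n$, and then integrate the resulting inequality for $\ln\lambda_\ell$ or for $\lambda_\ell^{-(n-\ell)}$. The differences are cosmetic: your upper constant $\binom{d-1}{n-1}\max(1,\lambda_1^0)^{n-1}$ is cruder than the paper's $\binom{d-1}{n-1}\lambda_1^0\cdots\lambda_{\ell-1}^0$, and you explicitly justify that $\lambda_\ell(t)>0$, which the paper uses tacitly when dividing.
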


\begin{proof}
Recall that
\[
\frac{\d}{\d t}\lambda_{\ell}=-4\left(\frac{n+1}{n!}\right)\lambda_{\ell} e_{n-1}(\hat{\lambda}_{\ell}).
\]
Its upper bound can be obtained as
\begin{align*}
\frac{\d}{\d t}\lambda_{\ell}(\tau)&=-4\left(\frac{n+1}{n!}\right)\lambda_{\ell}(\tau) e_{n-1}(\hat{\lambda}_{\ell}(\tau))\\
&\leq -4\left(\frac{n+1}{n!}\right)\lambda_1(\tau)\lambda_2(\tau)\cdots \lambda_n(\tau)\\
&\leq -4\left(\frac{n+1}{n!}\right)\lambda_1^\infty\lambda_2^\infty\cdots \lambda_{\ell-1}^\infty\lambda_{\ell}(\tau)\cdots\lambda_n(\tau).
\end{align*}
In the first inequality, we used \eqref{initial-condi-l} and only take the maximal element $\lambda_1(\tau)\lambda_2(\tau)\cdots\lambda_n(\tau)$. In the second inequality, we used the fact that each $\lambda_j$ is non-increasing and non-negative. Similarly, we also get the lower bound as follows:
\begin{align*}
\frac{\d}{\d t}\lambda_{\ell}(\tau)&=-4\left(\frac{n+1}{n!}\right)\lambda_{\ell}(\tau) e_{n-1}(\hat{\lambda}_{\ell}(\tau))\\
&\geq -4\left(\frac{n+1}{n!}\right){\dm-1\choose n-1}\lambda_1(\tau)\lambda_2(\tau)\cdots \lambda_n(\tau)\\
&\geq -4\left(\frac{n+1}{n!}\right){\dm-1\choose n-1}\lambda_1^0\lambda_2^0\cdots \lambda_{\ell-1}^0\lambda_{\ell}(\tau)\cdots\lambda_n(\tau).
\end{align*}
We replace each component in the sum ($\lambda_{\ell}(\tau) e_{n-1}(\hat{\lambda}_{\ell}(\tau))$) by the maximal element to get the first inequality. In the second inequality, we again used the fact that each $\lambda_j$ is non-increasing and non-negative.

Now, we use $\lambda_{\ell}=\lambda_{\ell+1}=\cdots=\lambda_n$ to get
\begin{align}\label{diffineq}
-C_2\lambda_{\ell}(\tau)^{n-\ell+1}\leq\frac{\d}{\d t}\lambda_{\ell}(\tau)\leq-C_1\lambda_{\ell}(\tau)^{n-\ell+1},
\end{align}
where the two positive constants $C_1$ and $C_2$ are defined as
\[
C_1:=4\left(\frac{n+1}{n!}\right)\lambda_1^\infty\cdots\lambda_{\ell-1}^\infty,\quad
C_2:=4\left(\frac{n+1}{n!}\right){\dm-1\choose n-1}\lambda_1^0\cdots\lambda_{\ell-1}^0.
\]

Now, we consider two cases.\\

\noindent\textbf{(Case 1: $\ell=n$)} For this case, \eqref{diffineq} can be simplified into
\[
-C_2\leq \frac{\d}{\d t}\ln \lambda_{\ell}\leq -C_1
\]
and it yields the exponential decay of $\lambda_{\ell}(=\lambda_n)$:
\begin{align}\label{eq:exponential}
\lambda_{\ell}^0e^{-C_2t}\leq \lambda_{\ell}(t)\leq \lambda_{\ell}^0e^{-C_1t}.
\end{align}

\noindent\textbf{(Case 2: $1\leq \ell<n$)} For this case, \eqref{diffineq} can be simplified into
\[
-C_2\leq\frac{1}{-n+\ell}\frac{\d}{\d t}\left(\lambda_{\ell}^{-n+\ell}\right)\leq -C_1
\]
and it yields an algebraic decay of $\lambda_{\ell}$ as follows:
\begin{align*}
&\left((\lambda_{\ell}^0)^{-n+\ell}+(n-\ell)C_2t\right)^{-\frac{1}{n-\ell}}
\leq\lambda_{\ell}(t)\leq \left((\lambda_{\ell}^0)^{-n+\ell}+(n-\ell)C_1t\right)^{-\frac{1}{n-\ell}},
\end{align*}
or simply:
\begin{align}\label{lambdal-alg-decay}
\lambda_{\ell}(t)\simeq t^{-\frac{1}{n-\ell}}
\qquad \text{as}\qquad t\to\infty.
\end{align}
\end{proof}

From the above lemma, we know the convergence rate of
\[
\lambda_{\ell}(t)=\lambda_{\ell+1}(t)=\cdots=\lambda_n(t)
\]
as $t\to\infty$. Since
\[
\lambda_n(t)\geq \lambda_{n+1}(t)\geq\cdots\geq\lambda_\dm(t),
\]
the components $\lambda_{n+1}(t),\dots,\lambda_\dm(t)$ converge to zero at least as fast as $\lambda_n(t)$. From this argument, we can also investigate the convergence rate of $\lambda_1(t), \lambda_2(t), \dots, \lambda_{\ell-1}(t)$ as $t\to\infty$. Recall that they converge to positive constants $\lambda_1^\infty,\lambda_2^\infty, \dots, \lambda_{\ell-1}^\infty$.

\begin{lemma}\label{rhodotestimate}
Suppose that the assumptions of Lemma \ref{lemma:lminus1nonzero} are satisfied. Then, the convergence rate of $\lambda_j$ for all $j\in\{1, 2, \dots, \ell-1\}$ can be classified as follows.

\begin{enumerate}
\item If $\ell=n$, then $\lambda_j(t)-\lambda_j^\infty$ converges exponentially fast to zero.

\item If $\ell<n$, then $\lambda_j(t)-\lambda_j^\infty$ decays algebraically to zero with rate $t^{-\frac{1}{n-\ell}}$ as $t\to\infty$.
\end{enumerate}
\end{lemma}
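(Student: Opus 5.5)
We plan to read off the rate of $\lambda_j-\lambda_j^\infty$, for $1\le j\le \ell-1$, from the integrated form of the ODE. Since $\lambda_j$ is nonincreasing with limit $\lambda_j^\infty>0$ (Lemma~\ref{lemma:lminus1nonzero}), we can write
\[
\lambda_j(t)-\lambda_j^\infty=\int_t^\infty 4\left(\frac{n+1}{n!}\right)\lambda_j(\tau)\,e_{n-1}(\widehat\lambda_j(\tau))\,\d\tau .
\]
The task is then to bound the integrand above and below by a constant times $\lambda_\ell(\tau)^{n-\ell+1}$ (equivalently, by $\lambda_1\cdots\lambda_n$), and to use Lemma~\ref{lemma:lambdalrate}.

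For the upper bound on the integrand, we use the ordering \eqref{monotoncondi}: every monomial in $\lambda_j e_{n-1}(\widehat\lambda_j)$ is a product of $n$ distinct eigenvalues, and is therefore at most $\lambda_1\cdots\lambda_n$. This gives
\[
\lambda_j e_{n-1}(\widehat\lambda_j)\le \binom{d-1}{n-1}\lambda_1^0\cdots\lambda_{\ell-1}^0\,\lambda_\ell^{\,n-\ell+1}.
\]
For the lower bound, the monomial $\lambda_1\cdots\lambda_n$ itself appears in $\lambda_j e_{n-1}(\widehat\lambda_j)$, because $j\le n$. Hence
\[
\lambda_j e_{n-1}(\widehat\lambda_j)\ge \lambda_1^\infty\cdots\lambda_{\ell-1}^\infty\,\lambda_\ell^{\,n-\ell+1},
\]
with a positive constant since $\lambda_i^\infty>0$ for $i\le \ell-1$. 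Both bounds mirror exactly the estimates used in the proof of Lemma~\ref{lemma:lambdalrate}.

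Combining these with \eqref{diffineq} is the cleanest route: $\lambda_\ell^{n-\ell+1}$ is comparable to $-\dot\lambda_\ell$. Therefore
\[
\lambda_j(t)-\lambda_j^\infty \simeq \int_t^\infty(-\dot\lambda_\ell)\,\d\tau=\lambda_\ell(t),
\]
and in fact two-sided bounds $c\,\lambda_\ell(t)\le \lambda_j(t)-\lambda_j^\infty\le C\lambda_\ell(t)$ hold for all $t\ge0$. The conclusion then follows from Lemma~\ref{lemma:lambdalrate}. If $\ell=n$, the bounds \eqref{eq:exponential} give exponential decay, with rate between $e^{-C_2t}$ and $e^{-C_1t}$. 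If $\ell<n$, \eqref{lambdal-alg-decay} gives $\lambda_j(t)-\lambda_j^\infty\simeq t^{-1/(n-\ell)}$.

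The only delicate point is checking that the integral representation is legitimate and that the comparison constants are strictly positive. The former holds because the integrand is nonnegative and $\lambda_j$ converges. The latter needs $\lambda_i^\infty>0$ for $i<\ell$ and $\lambda_\ell^0>0$, which is guaranteed by Lemma~\ref{lemma:lminus1nonzero} and the standing assumption $\lambda_n^0>0$. The case $\ell=1$ is vacuous.
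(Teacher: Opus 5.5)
Your proposal is correct and follows essentially the paper's argument: the paper also bounds $\dot\lambda_j$ between $-C_2\lambda_\ell^{n-\ell+1}$ and $-C_1\lambda_\ell^{n-\ell+1}$ (its \eqref{diffineq-2}), using the same monomial-counting and monotonicity estimates, and then integrates over $[t,\infty)$. The only difference is the order of steps. You first compare with $-\dot\lambda_\ell$ via \eqref{diffineq} to get $\frac{C_1}{C_2}\lambda_\ell(t)\le\lambda_j(t)-\lambda_j^\infty\le\frac{C_2}{C_1}\lambda_\ell(t)$ for all $t\ge0$, and only then invoke Lemma~\ref{lemma:lambdalrate}; the paper instead substitutes the explicit exponential and algebraic bounds for $\lambda_\ell$ into \eqref{diffineq-2} and integrates case by case, which makes your version slightly cleaner and rate-agnostic, with no need to restrict to $t>T$ in the algebraic case.
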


\begin{proof}
By a similar argument introduced in Lemma \ref{lemma:lambdalrate}, for any $j\in\{1, 2, \dots, \ell-1\}$, we also have
\begin{align}\label{diffineq-2}
-C_2\lambda_{\ell}(\tau)^{n-\ell+1}\leq\frac{\d}{\d \tau}\lambda_j(\tau)\leq-C_1\lambda_{\ell}(\tau)^{n-\ell+1}\quad\forall~\tau>0.
\end{align}

\noindent\textbf{(Case 1: $\ell=n$)} We substitute $\ell=n$ and \eqref{eq:exponential} directly into \eqref{diffineq-2} to get
\[
-C_2\lambda_{\ell}^0e^{-C_1\tau}\leq
\frac{\d}{\d \tau}\lambda_j(\tau)\leq-C_1\lambda_{\ell}^0e^{-C_2\tau}
\quad\forall~\tau>0.
\]
We integrate the above inequality on $[t,\infty)$ to get
\[
-\left(\frac{C_2}{C_1}\right)\lambda_{\ell}^0e^{-C_1t}\leq \lambda_j^\infty-\lambda_j(t)\leq -\left(\frac{C_1}{C_2}\right)\lambda_{\ell}^0e^{-C_2t}
\]
and it yields
\[
\left(\frac{C_1}{C_2}\right)\lambda_{\ell}^0e^{-C_2t}\leq \lambda_j(t)-\lambda_j^\infty\leq \left(\frac{C_2}{C_1}\right)\lambda_{\ell}^0e^{-C_1t}\quad\forall~t>0.
\]
Therefore, we know $\lambda_j(t)$ converges to $\lambda_j^\infty>0$ exponentially.\\ 

\noindent\textbf{(Case 2: $1\leq \ell<n$)} Since $\lambda_{\ell}(t)$ decays algebraically to zero with rate $t^{-\frac{1}{n-\ell}}$, there exist three positive constants $\tilde{C}_1$, $\tilde{C}_2$, and $T$ as follows:
\[
\tilde{C}_1t^{-\frac{1}{n-\ell}}\leq\lambda_{\ell}(t)\leq \tilde{C}_2t^{-\frac{1}{n-\ell}}\quad\forall~t>T.
\]
If we substitute this into \eqref{diffineq-2}, we get
\[
-C_2\tilde{C}_2^{n-\ell+1}\tau^{-\frac{1}{n-\ell}-1}\leq\frac{\d}{\d \tau}\lambda_j(\tau)\leq-C_1\tilde{C}_1^{n-\ell+1}\tau^{-\frac{1}{n-\ell}-1}
\quad\forall~\tau>T.
\]
Now, for any $t>T$, we integrate the above inequality on $[t, \infty)$ to get
\[
(n-\ell)C_1\tilde{C}_1^{n-\ell+1}t^{-\frac{1}{n-\ell}}\leq \lambda_j(t)-\lambda_j^\infty\leq (n-\ell)C_2\tilde{C}_2^{n-\ell+1}t^{-\frac{1}{n-\ell}},
\]
or simply:
\[
\lambda_j(t)-\lambda_j^\infty\simeq t^{-\frac{1}{n-\ell}}
\qquad \text{as}\qquad t\to\infty.
\]
\end{proof}

\begin{lemma}\label{lem:metric-speed-rate}
Suppose that the assumptions of Lemma \ref{lemma:lminus1nonzero} are satisfied. Then, the convergence rate of $|\rho'|(t)$ can be classified as follows.
\begin{enumerate}
\item If $\ell=n$, then $|\rho'|(t)$ converges exponentially fast to zero.

\item If $\ell<n$, then $|\rho'|(t)$ decays algebraically to zero with rate $t^{-\frac{1}{2(n-\ell)}-1}$ as $t\to\infty$.
\end{enumerate}
\end{lemma}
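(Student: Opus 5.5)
From the proof of Theorem~\ref{thm:maximal-slope-property} we have the identity
\[
|\rho'|(t)=|\partial\mathcal V_n|(\rho_t)=\|v[\rho_t]\|_{L^2(\rho_t)}
\qquad\text{for a.e. }t,
\]
so the plan is to compute this speed explicitly in the eigenbasis and then feed in the eigenvalue rates already obtained.

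\textbf{Step 1: explicit formula for the speed.} The same proof gives
\[
\|v[\rho_t]\|_{L^2(\rho_t)}^2=4c_n^2\operatorname{tr}\bigl(T_{n-1}(\Sigma_t)^2\Sigma_t\bigr).
\]
Diagonalizing with Corollary~\ref{cor:eigenvalue-system}, and using that $T_{n-1}(D_t)$ is diagonal with entries $e_{n-1}(\widehat\lambda_i)$, we get
\[
|\rho'|^2(t)=4c_n^2\sum_{i=1}^d \lambda_i(t)\,e_{n-1}(\widehat\lambda_i(t))^2
=\frac{1}{4}\sum_{i=1}^d\frac{\dot\lambda_i(t)^2}{\lambda_i(t)}.
\]
The last form follows from \eqref{2.2}, with the convention that terms with $\lambda_i\equiv0$ vanish. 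This identity holds for every $t$, since $t\mapsto\Sigma_t$ is continuous.

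\textbf{Step 2: two-sided bound on each term.} We bound each summand using the ordering \eqref{monotoncondi}, the equality $\lambda_\ell=\cdots=\lambda_n$, and the limits $\lambda_j^\infty>0$ for $j<\ell$ from Lemma~\ref{lemma:lminus1nonzero}.

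For $j\le \ell-1$, the factor $\lambda_j\to\lambda_j^\infty>0$. The factor $e_{n-1}(\widehat\lambda_j)$ is comparable to its largest monomial $\prod_{k\le n,k\ne j}\lambda_k$, as in the proof of Lemma~\ref{lemma:lambdalrate}, and that monomial is $\simeq\lambda_\ell^{\,n-\ell+1}$. So these terms are $\simeq\lambda_\ell^{2(n-\ell+1)}$.

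For $\ell\le i\le n$, the factor $e_{n-1}(\widehat\lambda_i)\simeq\lambda_\ell^{\,n-\ell}$, so the term is $\simeq\lambda_\ell\cdot\lambda_\ell^{2(n-\ell)}=\lambda_\ell^{2(n-\ell)+1}$.

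For $i>n$, we have $\lambda_i\le\lambda_\ell$ and $e_{n-1}(\widehat\lambda_i)\lesssim\lambda_\ell^{n-\ell}$, so these terms are bounded above by the same quantity $\lambda_\ell^{2(n-\ell)+1}$ and contribute nothing new.

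The term $i=\ell$ supplies the matching lower bound. Combining,
\[
|\rho'|^2(t)\simeq \lambda_\ell(t)^{2(n-\ell)+1},
\]
because the $j<\ell$ terms have higher order (exponent $2(n-\ell)+2$).

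\textbf{Step 3: insert the rates of Lemma~\ref{lemma:lambdalrate}.} If $\ell=n$, then $|\rho'|(t)\simeq\lambda_n(t)^{1/2}$. By \eqref{eq:exponential} this quantity is squeezed between $e^{-C_2t/2}$ and $e^{-C_1t/2}$, which gives exponential decay.

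If $\ell<n$, then \eqref{lambdal-alg-decay} gives
\[
|\rho'|(t)\simeq t^{-\frac{2(n-\ell)+1}{2(n-\ell)}}=t^{-1-\frac{1}{2(n-\ell)}},
\]
which is the claimed rate.

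\textbf{Main obstacle.} I expect the lower bound in Step 2 to need the most care. The comparison $e_{n-1}(\widehat\lambda_\ell)\gtrsim\prod_{k\le n,k\ne\ell}\lambda_k$ is immediate, since all terms are nonnegative, and $\prod_{k<\ell}\lambda_k\ge\prod_{k<\ell}\lambda_k^\infty>0$. One must also check that the $i>n$ terms cannot spoil the equivalence. They only add nonnegative contributions, and the upper bound above already controls them. The remaining point is bookkeeping: each $\simeq$ is stated only for large $t$, which matches the definition of $\simeq$ used in the paper.
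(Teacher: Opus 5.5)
Your proposal is correct and follows essentially the same route as the paper. Both arguments reduce $|\rho'|^2(t)$ to $4c_n^2\sum_{k}\lambda_k\,e_{n-1}(\widehat\lambda_k)^2$: the paper via $|\rho'|^2=-\frac{\d}{\d t}\mathcal V_n[\rho_t]$, you via $\|v[\rho_t]\|_{L^2(\rho_t)}^2=4c_n^2\operatorname{tr}\bigl(T_{n-1}(\Sigma_t)^2\Sigma_t\bigr)$, which is the same quantity. Both then sandwich it between constant multiples of $\lambda_\ell^{2(n-\ell)+1}$, using the ordering, $\lambda_\ell=\cdots=\lambda_n$, and $\lambda_j^\infty>0$ for $j<\ell$, and finally insert Lemma~\ref{lemma:lambdalrate}. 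The only differences are cosmetic: the paper bounds every summand uniformly by $\binom{d-1}{n-1}^2\lambda_1^2\cdots\lambda_{n-1}^2\lambda_n$ instead of splitting into index ranges, and the identity $|\rho'|(t)=\|v[\rho_t]\|_{L^2(\rho_t)}$ holds only for a.e.\ $t$, which is harmless here.
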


\begin{proof}
Since 
\[
\mathcal{V}_n[\rho_t]=\frac{n+1}{n!}e_n(\Sigma(\rho_t))=\frac{n+1}{n!}e_n(\lambda_1(\rho_t), \ldots, \lambda_\dm(\rho_t)),
\]
we have
\begin{align*}
\frac{\d}{\d t}\mathcal{V}_n[\rho_t]&=\frac{n+1}{n!}\sum_{k=1}^\dm \left(\frac{\d}{\dt}\lambda_k(t)\right)e_{n-1}\left(\hat{\lambda}_k(t)\right)\\
&=-4\left(\frac{n+1}{n!}\right)^2\sum_{k=1}^d\lambda_k(t)\left(e_{n-1}(\hat{\lambda}_k(t))\right)^2.
\end{align*}

\noindent\textbf{(Upper bound)} We can find the upper bound of the derivative as follows:
\begin{align*}
\frac{\d}{\d t}\mathcal{V}_n[\rho_t]
&=-4\left(\frac{n+1}{n!}\right)^2\sum_{k=1}^d\lambda_k(t)\left(e_{n-1}(\hat{\lambda}_k(t))\right)^2\\
&\leq -4\left(\frac{n+1}{n!}\right)^2\lambda_n(t)\left(e_{n-1}(\hat{\lambda}_n(t))\right)^2\\
&\leq-4\left(\frac{n+1}{n!}\right)^2\lambda_1(t)^2\lambda_2(t)^2\cdots\lambda_{n-1}(t)^2\lambda_n(t)\\
&\leq -4\left(\frac{n+1}{n!}\right)^2(\lambda_1^\infty)^2(\lambda_2^\infty)^2\cdots(\lambda_{\ell-1}^\infty)^2\lambda_{\ell}(t)^{2(n-\ell)+1}.
\end{align*}
In the second inequality, we just take $k=n$ case from the sum. In the third inequality, we use that the maximal element in a sum $e_{n-1}(\hat{\lambda}_n(t))$ is $\lambda_1(t)\lambda_2(t)\cdots\lambda_{n-1}(t)$. We used monotonicity of $\lambda_j$ and $\lambda_{\ell}(t)=\lambda_{\ell+1}(t)=\cdots=\lambda_n(t)$ in the last inequality.\\

\noindent\textbf{(Lower bound)} If $k\in\{1, 2, \cdots, n-1\}$, then we know $\frac{\prod_{j=1}^{n}\lambda_j}{\lambda_k}$ is the maximal term in a sum $e_{n-1}(\hat{\lambda}_k)$. Since there are ${\dm-1\choose n-1}$ terms in the sum, we get
\[
e_{n-1}(\hat{\lambda}_k)\leq {\dm-1\choose n-1}\frac{\prod_{j=1}^{n}\lambda_j}{\lambda_k}.
\]
Therefore, we get
\[
\lambda_k\left(e_{n-1}(\hat{\lambda}_k)\right)^2\leq\frac{1}{\lambda_k}{\dm-1\choose n-1}^2\left(\prod_{j=1}^{n}\lambda_j\right)^2.
\]
Since $\lambda_n\leq \lambda_k$ for this case, we have
\[
\lambda_k\left(e_{n-1}(\hat{\lambda}_k)\right)^2\leq\frac{1}{\lambda_n}{\dm-1\choose n-1}^2\left(\prod_{j=1}^{n}\lambda_j\right)^2={\dm-1\choose n-1}^2\lambda_1^2\lambda_2^2\cdots\lambda_{n-1}^2\lambda_n.
\]
Similarly, if $k\in \{n,n+1, \cdots, \dm\}$, then the maximum term in a sum $e_{n-1}(\hat{\lambda}_k)$ is $\prod_{j=1}^{n-1}\lambda_j$. Again, there are ${\dm-1\choose n-1}$ terms in the sum, we get
\[
e_{n-1}(\hat{\lambda}_k)\leq {\dm-1\choose n-1}\prod_{j=1}^{n-1}\lambda_j.
\]
For this case, we have $\lambda_n\geq \lambda_k$, and it implies
\[
\lambda_k\left(e_{n-1}(\hat{\lambda}_k)\right)^2\leq \lambda_n\left(e_{n-1}(\hat{\lambda}_k)\right)^2\leq {\dm-1\choose n-1}^2\left(\prod_{j=1}^{n-1}\lambda_j\right)^2 \lambda_n\leq {\dm-1\choose n-1}^2\lambda_1^2\lambda_2^2\cdots\lambda_{n-1}^2\lambda_n.
\]
Finally, for any $k\in\{1, 2, \cdots, \dm\}$, we can conclude that
\[
\lambda_k\left(e_{n-1}(\hat{\lambda}_k)\right)^2\leq {\dm-1\choose n-1}^2\lambda_1^2\lambda_2^2\cdots\lambda_{n-1}^2\lambda_n.
\]
From this inequality, we can find the lower bound of the derivative as follows:
\begin{align*}
\frac{\d}{\d t}\mathcal{V}_n[\rho_t]
&=-4\left(\frac{n+1}{n!}\right)^2\sum_{k=1}^d\lambda_k(t)\left(e_{n-1}(\hat{\lambda}_k(t))\right)^2\\
&\geq-4\left(\frac{n+1}{n!}\right)^2\sum_{k=1}^d{\dm-1\choose n-1}^2\lambda_1(t)^2\lambda_2(t)^2\cdots\lambda_{n-1}(t)^2\lambda_n(t)\\
&=-4\dm\left(\frac{n+1}{n!}\right)^2{\dm-1\choose n-1}^2\lambda_1(t)^2\lambda_2(t)^2\cdots\lambda_{n-1}(t)^2\lambda_n(t)\\
&\geq-4\dm\left(\frac{n+1}{n!}\right)^2{\dm-1\choose n-1}^2(\lambda_1^0)^2(\lambda_2^0)^2\cdots(\lambda_{\ell-1}^0)^2\lambda_{\ell}(t)^{2(n-\ell)+1}.
\end{align*}
In the last inequality, we again used monotonicity of $\lambda_j$ and $\lambda_{\ell}(t)=\lambda_{\ell+1}(t)=\cdots=\lambda_n(t)$.\\

From the above bounds, we get
\[
 -C_4\lambda_{\ell}(t)^{2(n-\ell)+1}\leq \frac{\d}{\d t}\mathcal{V}_n[\rho_t]\leq -C_3\lambda_{\ell}(t)^{2(n-\ell)+1}
\]
where $C_3$ and $C_4$ are positive constants:
\[
C_3:=4\left(\frac{n+1}{n!}\right)^2(\lambda_1^\infty)^2(\lambda_2^\infty)^2\cdots(\lambda_{\ell-1}^\infty)^2,\quad C_4:=4\dm\left(\frac{n+1}{n!}\right)^2{\dm-1\choose n-1}^2(\lambda_1^0)^2(\lambda_2^0)^2\cdots(\lambda_{\ell-1}^0)^2.
\]
From the following relationship:
\[
|\rho'|(t)=\left(\int_{\bbr^\dm}\|v_t\|^2\d\rho_t\right)^{1/2}=\sqrt{-\frac{\d}{\d t}\mathcal{V}_n[\rho_t]},
\]
we get
\[
C_3^{1/2}\lambda_{\ell}(t)^{n-\ell+\frac{1}{2}}\leq| \rho'|(t)\leq C_4^{1/2}\lambda_{\ell}(t)^{n-\ell+\frac{1}{2}}.
\]
By substituting the result from  Lemma \ref{lemma:lambdalrate}, we can obtain the desired result.
\end{proof}

The following lower bound is a Gelbrich-type inequality; see \cite{gelbrich1990formula}.

\begin{lemma}[Gelbrich-type lower bound]\label{lem:Gelbrich-lower-bound}
Let $\mu,\nu\in\mathcal P_2(\mathbb R^d)$, and denote their means and
covariance matrices by $m_\mu$, $m_\nu$, $\Sigma_\mu$, and $\Sigma_\nu$, respectively. Then
\[
W_2^2(\mu,\nu)
\geq |m_\mu-m_\nu|^2
+
\operatorname{tr}\Sigma_\mu
+
\operatorname{tr}\Sigma_\nu 
-
2\operatorname{tr}
\left[
\left(
\Sigma_\nu^{1/2}\Sigma_\mu\Sigma_\nu^{1/2}
\right)^{1/2}
\right].
\]
In particular, if $m_\mu=m_\nu$ and $\Sigma_\mu,\Sigma_\nu$ are simultaneously
diagonalizable, say
\[
\Sigma_\mu=P^\top\operatorname{diag}(\lambda_1,\dots,\lambda_d)P,
\qquad
\Sigma_\nu=P^\top\operatorname{diag}(\eta_1,\dots,\eta_d)P,
\]
then
\[
W_2^2(\mu,\nu)
\ge
\sum_{i=1}^d
\left(\sqrt{\lambda_i}-\sqrt{\eta_i}\right)^2.
\]
\end{lemma}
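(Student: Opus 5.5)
We prove the Gelbrich-type bound by reducing to Gaussian measures. The plan is to show that for any coupling $\pi\in\Gamma(\mu,\nu)$ the transport cost depends only on the first two moments of $\pi$, and then to bound the cross-covariance term by a trace inequality.

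First, fix $\pi\in\Gamma_o(\mu,\nu)$ and set $a=x-m_\mu$, $b=y-m_\nu$. Since $\int a\,\d\pi=\int b\,\d\pi=0$, expanding $|x-y|^2=|(m_\mu-m_\nu)+(a-b)|^2$ gives
\[
W_2^2(\mu,\nu)=|m_\mu-m_\nu|^2+\operatorname{tr}\Sigma_\mu+\operatorname{tr}\Sigma_\nu-2\operatorname{tr}K,
\qquad K:=\int ab^\top\,\d\pi .
\]
The joint covariance $\begin{bmatrix}\Sigma_\mu & K\\ K^\top & \Sigma_\nu\end{bmatrix}$ is positive semidefinite. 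It therefore suffices to prove the purely matrix-theoretic statement
\[
\sup\left\{\operatorname{tr}K:\ \begin{bmatrix}\Sigma_\mu & K\\ K^\top & \Sigma_\nu\end{bmatrix}\succeq0\right\}
\le\operatorname{tr}\bigl[(\Sigma_\nu^{1/2}\Sigma_\mu\Sigma_\nu^{1/2})^{1/2}\bigr].
\]

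This supremum bound is the main step. If both matrices are invertible, positive semidefiniteness is equivalent to $K=\Sigma_\mu^{1/2}C\Sigma_\nu^{1/2}$ with $\|C\|_{\mathrm{op}}\le1$. Hence
\[
\operatorname{tr}K=\operatorname{tr}\bigl(C\,\Sigma_\nu^{1/2}\Sigma_\mu^{1/2}\bigr)\le\bigl\|\Sigma_\nu^{1/2}\Sigma_\mu^{1/2}\bigr\|_{\text{trace}},
\]
by von Neumann's trace inequality. The trace norm of $M=\Sigma_\nu^{1/2}\Sigma_\mu^{1/2}$ equals $\operatorname{tr}(MM^\top)^{1/2}=\operatorname{tr}(\Sigma_\nu^{1/2}\Sigma_\mu\Sigma_\nu^{1/2})^{1/2}$, which is the claimed bound. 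In the degenerate case I replace $\Sigma_\mu,\Sigma_\nu$ by $\Sigma_\mu+\varepsilon I,\Sigma_\nu+\varepsilon I$, which keeps the block matrix positive semidefinite. I then let $\varepsilon\to0$, using continuity of the matrix square root on positive semidefinite matrices. I expect this regularization and continuity step to be the only delicate point; everything else is algebra.

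For the particular case, take $\Sigma_\mu=P^\top\Lambda P$ and $\Sigma_\nu=P^\top HP$ with $\Lambda,H$ diagonal. Then
\[
\Sigma_\nu^{1/2}\Sigma_\mu\Sigma_\nu^{1/2}=P^\top H\Lambda P,
\]
whose square root is $P^\top(H\Lambda)^{1/2}P$, with trace $\sum_i\sqrt{\lambda_i\eta_i}$. Substituting this into the general bound with $m_\mu=m_\nu$ gives
\[
W_2^2(\mu,\nu)\ge\sum_i\bigl(\lambda_i+\eta_i-2\sqrt{\lambda_i\eta_i}\bigr)=\sum_i\bigl(\sqrt{\lambda_i}-\sqrt{\eta_i}\bigr)^2 .
\]
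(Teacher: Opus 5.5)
Your proposal is correct and follows essentially the same route as the paper. You expand the transport cost into the mean term, the two covariance traces and the cross-covariance $K$, and use positive semidefiniteness of the block matrix to write $K=\Sigma_\mu^{1/2}C\Sigma_\nu^{1/2}$ with $\|C\|_{\mathrm{op}}\le 1$. You then bound $\operatorname{tr}K$ by the trace norm of $\Sigma_\nu^{1/2}\Sigma_\mu^{1/2}$ and handle singular covariances by $\varepsilon$-regularization. The only differences are cosmetic: you fix an optimal plan rather than taking the infimum over all couplings, and your opening mention of a reduction to Gaussian measures is never actually used.
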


\begin{proof}
Let $\pi\in\Gamma(\mu,\nu)$ be an arbitrary transport plan, and let $(X,Y)$ be the coordinate
maps on $\mathbb R^d\times\mathbb R^d$ endowed with the probability measure
$\pi$. Set
\[
\widetilde X:=X-m_\mu,
\qquad
\widetilde Y:=Y-m_\nu.
\]
Then
\[
X-Y=(m_\mu-m_\nu)+(\widetilde X-\widetilde Y),
\]
and since
\[
\int_{\bbr^d\times \bbr^d} \widetilde X\,d\pi=0,
\qquad
\int_{\bbr^d\times \bbr^d} \widetilde Y\,d\pi=0,
\]
we obtain
\[
\begin{aligned}
\int_{\mathbb R^{d}\times\bbr^\dm} |x-y|^2\,d\pi(x,y)
&=
|m_\mu-m_\nu|^2
+
\int_{\bbr^\dm\times\bbr^\dm} |\widetilde X-\widetilde Y|^2\,d\pi \\
&=
|m_\mu-m_\nu|^2
+
\int_{\bbr^\dm\times\bbr^\dm} |\widetilde X|^2\,d\pi
+
\int_{\bbr^\dm\times\bbr^\dm} |\widetilde Y|^2\,d\pi
-
2\int_{\bbr^\dm\times\bbr^\dm} \widetilde X\cdot\widetilde Y\,d\pi \\
&=
|m_\mu-m_\nu|^2
+
\operatorname{tr}\Sigma_\mu
+
\operatorname{tr}\Sigma_\nu
-
2\operatorname{tr}C_\pi,
\end{aligned}
\]
where
\[
C_\pi:=
\int_{\mathbb R^{d}\times\bbr^\dm}
\widetilde X\widetilde Y^\top\,d\pi
\]
is the cross-covariance matrix associated with the coupling $\pi$.

It remains to estimate $\operatorname{tr}C_\pi$ from above. The block covariance
matrix
\[
\begin{pmatrix}
\Sigma_\mu & C_\pi \\
C_\pi^\top & \Sigma_\nu
\end{pmatrix}
\]
is nonnegative symmetric. Assume first that \(\Sigma_\mu\) and \(\Sigma_\nu\) are positive definite. Then there exists a matrix $R_\pi$ with
$\|R_\pi\|_{\mathrm{op}}\le1$ such that $C_\pi=\Sigma_\mu^{1/2}R_\pi\Sigma_\nu^{1/2}$ by the nonnegativity of the block covariance matrix, or equivalently by the Schur complement condition.

Therefore,
\[
\operatorname{tr}C_\pi
=
\operatorname{tr}
\left(
R_\pi\Sigma_\nu^{1/2}\Sigma_\mu^{1/2}
\right).
\]
By the duality between the operator norm and the trace norm,
\[
\operatorname{tr}
\left(
R_\pi\Sigma_\nu^{1/2}\Sigma_\mu^{1/2}
\right)
\le
\left\|
\Sigma_\nu^{1/2}\Sigma_\mu^{1/2}
\right\|_*.
\]
Moreover,
\[
\left\|
\Sigma_\nu^{1/2}\Sigma_\mu^{1/2}
\right\|_*
=
\operatorname{tr}
\left[
\left(
\Sigma_\nu^{1/2}\Sigma_\mu\Sigma_\nu^{1/2}
\right)^{1/2}
\right].
\]
Hence
\[
\operatorname{tr}C_\pi
\le
\operatorname{tr}
\left[
\left(
\Sigma_\nu^{1/2}\Sigma_\mu\Sigma_\nu^{1/2}
\right)^{1/2}
\right].
\]
If $\Sigma_\mu$ or $\Sigma_\nu$ is singular, the same estimate follows by
replacing them with $\Sigma_\mu+\varepsilon I$ and
$\Sigma_\nu+\varepsilon I$, applying the previous argument, and then letting
$\varepsilon\downarrow0$.

Consequently, for every coupling $\pi\in\Gamma(\mu,\nu)$,
\[
\begin{aligned}
\int_{\mathbb R^{d}\times\bbr^\dm} |x-y|^2\,d\pi(x,y)
\ge\;&
|m_\mu-m_\nu|^2
+
\operatorname{tr}\Sigma_\mu
+
\operatorname{tr}\Sigma_\nu
-
2\operatorname{tr}
\left[
\left(
\Sigma_\nu^{1/2}\Sigma_\mu\Sigma_\nu^{1/2}
\right)^{1/2}
\right].
\end{aligned}
\]
Taking the infimum over all $\pi\in\Gamma(\mu,\nu)$ gives the desired lower
bound for $W_2^2(\mu,\nu)$.

Finally, suppose that $m_\mu=m_\nu$ and that $\Sigma_\mu,\Sigma_\nu$ are
simultaneously diagonalizable:
\[
\Sigma_\mu=P^\top\operatorname{diag}(\lambda_1,\dots,\lambda_d)P,
\qquad
\Sigma_\nu=P^\top\operatorname{diag}(\eta_1,\dots,\eta_d)P.
\]
Then
\[
\operatorname{tr}\Sigma_\mu+\operatorname{tr}\Sigma_\nu
=
\sum_{i=1}^d(\lambda_i+\eta_i),
\]
and
\[
\operatorname{tr}
\left[
\left(
\Sigma_\nu^{1/2}\Sigma_\mu\Sigma_\nu^{1/2}
\right)^{1/2}
\right]
=
\sum_{i=1}^d\sqrt{\lambda_i\eta_i}.
\]
Therefore,
\[
W_2^2(\mu,\nu)
\ge
\sum_{i=1}^d
\left(
\lambda_i+\eta_i-2\sqrt{\lambda_i\eta_i}
\right)
=
\sum_{i=1}^d
\left(\sqrt{\lambda_i}-\sqrt{\eta_i}\right)^2.
\]
This proves the claim.
\end{proof}

\noindent\underline{\textbf{Proof of Theorem 1.2}}

The stationary case \(\lambda_n^0=0\) and the case \(n=1\) have already been treated
at the beginning of this section. Hence we assume throughout the rest of the proof that
\(n\ge2\) and \(\lambda_n^0>0\).

By Lemma~\ref{lem:metric-speed-rate}, the metric derivative \(|\rho'|(t)\) is integrable
on \([T,\infty)\) for some sufficiently large \(T>0\). Indeed, if \(\ell=n\), then
\(|\rho'|(t)\) decays exponentially, while if \(\ell<n\), then
\[
|\rho'|(t)\lesssim t^{-1-\frac{1}{2(n-\ell)}}.
\]
Therefore
\[
\int_T^\infty |\rho'|(\tau)\,d\tau<\infty.
\]
For \(s>t\ge T\), the absolute continuity of \((\rho_t)\) in \(W_2\) gives
\[
W_2(\rho_t,\rho_s)
\le
\int_t^s |\rho'|(\tau)\,d\tau.
\]
Letting \(t,s\to\infty\), we see that \((\rho_t)_{t\ge0}\) is a Cauchy curve in
\((\mathcal P_2(\mathbb R^d),W_2)\). Since \((\mathcal P_2(\mathbb R^d),W_2)\) is complete,
there exists \(\rho_\infty\in\mathcal P_2(\mathbb R^d)\) such that
\[
W_2(\rho_t,\rho_\infty)\to0
\qquad\text{as }t\to\infty.
\]
Moreover, by letting \(s\to\infty\) in the preceding estimate, we obtain
\[
W_2(\rho_t,\rho_\infty)
\le
\int_t^\infty |\rho'|(\tau)\,d\tau.
\]

We next identify the covariance of the limiting measure. Since \(\rho_t\to\rho_\infty\)
in \(W_2\), the means and covariance matrices converge. Thus
\[
m(\rho_\infty)=m_0,
\qquad
\Sigma(\rho_\infty)
=
P^\top
\operatorname{diag}(\lambda_1^\infty,\dots,\lambda_d^\infty)P.
\]
By Remark~4.1,
\[
\lambda_j^\infty>0 \quad\text{for }1\le j\le \ell-1,
\qquad
\lambda_j^\infty=0 \quad\text{for }j\ge \ell.
\]
Therefore
\[
\operatorname{rank}\Sigma(\rho_\infty)=\ell-1.
\]

Let \(w_1,\dots,w_d\) be the orthonormal eigenvectors of \(\Sigma(\rho_0)\) associated with
\(\lambda_1^0,\dots,\lambda_d^0\). Since the eigenspaces are preserved by the covariance
flow, we have
\[
\ker\Sigma(\rho_\infty)
=
\operatorname{span}\{w_\ell,\dots,w_d\}.
\]
For \(z\in \ker\Sigma(\rho_\infty)\), we have
\[
0
=
z^\top\Sigma(\rho_\infty)z
=
\int_{\mathbb R^d}
|z\cdot(x-m_0)|^2\,d\rho_\infty(x).
\]
Hence \(z\cdot(x-m_0)=0\) for \(\rho_\infty\)-a.e. \(x\). Taking
\(z=w_\ell,\dots,w_d\), we obtain
\[
\operatorname{supp}\rho_\infty
\subset
m_0+\operatorname{span}\{w_1,\dots,w_{\ell-1}\}.
\]

We now prove the convergence rates. From the estimate
\[
W_2(\rho_t,\rho_\infty)
\le
\int_t^\infty |\rho'|(\tau)\,d\tau
\]
and Lemma~\ref{lem:metric-speed-rate}, we obtain
\[
\begin{cases}
W_2(\rho_t,\rho_\infty)\lesssim e^{-ct}
\quad\text{for some }c>0, & n=\ell,\\[1mm]
W_2(\rho_t,\rho_\infty)
\lesssim
t^{-\frac{1}{2(n-\ell)}}, & n>\ell.
\end{cases}
\]
In particular, in the case \(n=\ell\), this implies
\[
-\ln\left(W_2(\rho_t,\rho_\infty)\right)\gtrsim t.
\]

For the lower bound, we use Lemma~\ref{lem:Gelbrich-lower-bound}. Since
\(m(\rho_t)=m(\rho_\infty)=m_0\) and \(\Sigma(\rho_t)\), \(\Sigma(\rho_\infty)\) are
diagonalized by the same orthogonal matrix, we have
\[
W_2^2(\rho_t,\rho_\infty)
\ge
\sum_{i=1}^d
\left(\sqrt{\lambda_i(t)}-\sqrt{\lambda_i^\infty}\right)^2.
\]
Since \(\lambda_i^\infty=0\) for \(i\ge\ell\), it follows that
\[
W_2^2(\rho_t,\rho_\infty)
\ge
\sum_{i=\ell}^d \lambda_i(t)
\ge
\lambda_\ell(t).
\]
Hence
\[
W_2(\rho_t,\rho_\infty)
\ge
\lambda_\ell(t)^{1/2}.
\]
Using Lemma~\ref{lemma:lambdalrate}, we get
\[
\begin{cases}
W_2(\rho_t,\rho_\infty)\gtrsim e^{-Ct}
\quad\text{for some }C>0, & n=\ell,\\[1mm]
W_2(\rho_t,\rho_\infty)
\gtrsim
t^{-\frac{1}{2(n-\ell)}}, & n>\ell.
\end{cases}
\]
In particular, in the case \(n=\ell\), this implies
\[
-\ln\left(W_2(\rho_t,\rho_\infty)\right)\lesssim t.
\]
Combining the upper and lower bounds, we conclude that
\[
\begin{cases}
-\ln\left(W_2(\rho_t,\rho_\infty)\right)\simeq t, & n=\ell,\\[1mm]
W_2(\rho_t,\rho_\infty)\simeq t^{-\frac{1}{2(n-\ell)}}, & n>\ell.
\end{cases}
\]
This completes the proof.
\qed

 \bibliographystyle{plain}
 \bibliography{lit}
\end{document}